\numberwithin{equation}{section}
\numberwithin{figure}{section}
\theoremstyle{plain}
\newtheorem{thm}{Theorem}[section]
  \theoremstyle{definition}
  \newtheorem{defn}[thm]{Definition}
  \theoremstyle{remark}
  \newtheorem{rem}[thm]{Remark}
  \theoremstyle{plain}
  \newtheorem{prop}[thm]{Proposition}
  \theoremstyle{plain}
  \newtheorem{lem}[thm]{Lemma}
  \theoremstyle{plain}
  \newtheorem{cor}[thm]{Corollary}
 \theoremstyle{definition}
  \newtheorem{example}[thm]{Example}
  \theoremstyle{plain}
  \newtheorem{fact}[thm]{Fact}
\newcommand{\1}{\mathbbm{1}}
\newcommand{\N}{\mathbb{N}}
\newcommand{\Z}{\mathbb{Z}}
\newcommand{\R}{\mathbb{R}}
\newcommand{\e}{\mathrm{e}}
\renewcommand{\Pi}{\pi}
\renewcommand{\tilde}{\widetilde}
\renewcommand{\lg}{\log}
\DeclareMathOperator*\interior{int}
\newcommand\id{\mathrm{id}}
\DeclareMathOperator*{\card}{card}
\begin{document}
\selectlanguage{english}

\title{Induced topological pressure for countable state Markov shifts }

\author{Johannes Jaerisch}

\email{jogy@math.uni-bremen.de}

\author{Marc Kesseböhmer }

\email{mhk@math.uni-bremen.de}

\author{Sanaz Lamei }

\email{lamei@guilan.ac.ir}

\thanks{SL was supported by a grant offered by the University of Guilan. }

\address{Fachbereich Mathematik, Universität Bremen, 28359 Bremen, Germany }

\address{Department of Mathematics, The University of Guilan, P.O. Box 1841,
Rasht, Iran.}
\begin{abstract}
We introduce the notion of induced topological pressure for countable
state Markov shifts with respect to a non-negative scaling function
and an arbitrary subset of finite words. Firstly, the scaling function
allows a direct access to important thermodynamical quantities, which
are usually given only implicitly by certain identities involving
the classically defined pressure. In this context we generalise Savchenko's
definition of entropy for special flows to a corresponding notion
of topological pressure and show that this new notion coincides with
the induced pressure for a large class of Hölder continuous height
functions not necessarily bounded away from zero. Secondly, the dependence
on the subset of words gives rise to interesting new results connecting
the Gurevi\v c and the classical pressure with exhausting principles
for a large class of Markov shifts. In this context we consider dynamical
group extentions to demonstrate that our new approach provides a useful
tool to characterise amenability of the underlying group structure. 
\end{abstract}

\keywords{thermodynamical formalism; topological pressure; countable state
Markov shift; special flows; group extensions }

\subjclass[2000]{37D35; 28A65}

\maketitle

\section{Introduction and statement of main results}

Throughout this paper we consider a \emph{topological Markov shift}
\[
\Sigma:=\left\{ \omega:=\left(\omega_{1},\omega_{2},\ldots\right)\in I^{\N}:\forall i\in\N\; A_{\omega_{i}\omega_{i+1}}=1\right\} \]
with \emph{alphabet} $I\subset\N$, \emph{incidence matrix} $A\in\left\{ 0,1\right\} ^{I\times I}$,
and \emph{left shift map} $\sigma:\Sigma\rightarrow\Sigma$ given
by $\left(\omega_{1},\omega_{2},\ldots\right)\mapsto\left(\omega_{2},\omega_{3},\ldots\right)$.
The shift map $\sigma$ is continuous with respect to the metric $d_{\alpha}\left(\omega_{1},\omega_{2}\right):=\e^{-\alpha\left|\omega_{1}\wedge\omega_{2}\right|}$,
$\alpha>0$, where $\omega_{1}\wedge\omega_{2}$ denotes the longest
common initial block of $\omega_{1}$, $\omega_{2}\in\Sigma$. We
denote the set of $A$--admissible words of length $n\in\mathbb{N}$
 by $\Sigma^{n}:=\left\{ \omega\in I^{n}:\,\,\forall i\in\left\{ 1,\dots,n-1\right\} :A_{\omega_{i}\omega_{i+1}}=1\right\} $
and the set of $A$--admissible words of arbitrary length by $\Sigma^{*}:=\bigcup_{n\in\N}\Sigma^{n}$
. For $\omega\in\Sigma^{*}$ we let $|\omega|$ denote the (word)
length of $\omega$, which is the unique $n\in\N$ such that $\omega\in\Sigma^{n}$,
and for $\omega\in\Sigma$ we set $\left|\omega\right|=\infty$. For
$\omega\in\Sigma^{n}$ we call $\left[\omega\right]:=\left\{ \tau\in\Sigma:\tau_{|n}=\omega\right\} $
the \emph{cylindrical set} of $\omega$. 

Let us now introduce the induced topological pressure, which is the
central object of this paper. 
\begin{defn}
For $\varphi,\psi:\Sigma\rightarrow\R$ with $\psi\ge0$, and $\mathcal{C}\subset\Sigma^{*}$
we define for $\eta>0$ the $\psi$\emph{--induced pressure of} $\varphi$
(with respect to $\mathcal{C}$) by \[
\mathcal{P}_{\psi}\left(\varphi,\mathcal{C}\right):=\limsup_{T\rightarrow\infty}\frac{1}{T}\log\sum_{{\omega\in\mathcal{C}\atop T-\eta<S_{\omega}\psi\le T}}\exp S_{\omega}\varphi,\]
which takes values in $\overline{\R}:=\R\cup\left\{ \pm\infty\right\} $.
In here, we set $S_{\omega}\varphi:=\sup_{\tau\in\left[\omega\right]}\sum_{k=0}^{|\omega|-1}\varphi\circ\sigma^{k}\left(\tau\right)$.
\end{defn}
It will follow from Theorem \ref{thm:pressure_as_inf} that the definition
of $\mathcal{P}_{\psi}\left(\varphi,\mathcal{C}\right)$ is in fact
independent of the choice of $\eta>0$. For this reason we are not
going to refer to $\eta>0$ in the definition of the induced pressure. 

Of particular interest for the choice of $\mathcal{C}$ is certainly
the set of all finite words $\Sigma^{*}$ as well as the important
subsets

\[
\Sigma^{\mathrm{per}}:=\left\{ \omega\in\Sigma^{*}:\overline{\omega}\in\Sigma\right\} ,\qquad\Sigma_{a}^{\mathrm{per}}:=\left\{ \omega\in\Sigma^{\mathrm{per}}:\omega_{1}=a\right\} .\]

In here, for $\omega=(\omega_{1},\ldots,\omega_{n})\in\Sigma^{*}$,
we let $\overline{\omega}:=(\omega_{1},\ldots,\omega_{n},\omega_{1},\ldots,\omega_{n},\ldots)$
denote the periodic word in $I^{\N}$ with period $n\in\N$ and initial
block $\omega$.

Our definition generalises known notions of topological pressure for
the particular choice $\psi=1$. That is, if $A$ is mixing (i.e.
$\forall i,j\in I\exists n_{0}\ge1\forall n\ge n_{0}\exists\omega\in\Sigma^{n}:i\omega j\in\Sigma^{*}$)
then $\mathcal{P}_{1}\left(\varphi,\Sigma_{a}^{\mathrm{per}}\right)$
coincides with the Gurevi\v c pressure (cf.\ \citep{MR1738951,MR1818392}).
Whereas $\mathcal{P}_{1}\left(\varphi,\Sigma^{*}\right)$ coincides
with the classical notion of pressure for countable state Markov systems
(cf.\ \citep{MR2003772}). It is important to note that in the context
of the classical pressure one typically imposes that $A$ is \emph{finitely
irreducible}, i.e. there exists a finite set $\Lambda\subset\Sigma^{*}$
such that $\forall i,j\in I\exists\omega\in\Lambda:i\omega j\in\Sigma^{*}$.
While in the context of the Gurevi\v c pressure one typically imposes
that $A$ is mixing and satisfies the \emph{big images and preimages
(BIP) property,} i.e. there exists a finite set $B\subset I$ such
that $\forall i\in I\exists b_{1},b_{2}\in B:b_{1}ib_{2}\in\Sigma^{*}$.
Our general results make only few assumptions on the mixing properties
of the Markov shift. We will show in Proposition \ref{fac:pressure}
that several properties known for the $1$--induced pressure allow
a natural generalisation to the induced pressure.

Alternative approach to implicitly defined thermodynamic quantities

One important feature of the induced pressure is the freedom to choose
a scaling function $\psi\ge0$. This provides us with a new access
to important thermodynamical quantities, which were so far only given
implicitly as pseudo-inverse of the $1$--induced pressure before.
More precisely, under certain conditions (cf.\ Corollary \ref{cor:pressure_rescaling},
\ref{cor:pressureformula_ifstreactlydescreasing} and \ref{cor:psipressure_asinf-underexhaustingprop})
we have \[
\mathcal{P}_{\psi}\left(\varphi,\mathcal{C}\right)=\inf\left\{ \beta:\mathcal{P}_{1}\left(\varphi-\beta\psi,\mathcal{C}\right)\le0\right\} .\]
In fact, the right-hand side is often referred to as the free energy,
which is the main ingredient to determine multifractal spectra \citep{JaerischKessebohmer:09},
large deviation asymptotics \citep{MR1819804}, or the thermodynamic
formalism for special flows as elaborated in Remark \ref{rem:BarreiraIommi}
below. Furthermore, for the Gurevi\v c case (i.e. $\mathcal{C}:=\Sigma_{a}^{\mathrm{per}}$)
we are able to provide in Proposition \ref{pro:variationalprinciple-1-1}
a variational principle, which will turn out to be crucial for our
study of special semi-flows. 

Special semi-flows over countable state Markov shifts

In this paragraph we will discuss the connection between the induced
pressure and a new notion of pressure generalising Savchenko's definition
of entropy. Let $\tau:\Sigma\rightarrow\R_{>0}$ be a continuous function
and consider \[
Y:=\left\{ \left(\omega,t\right)\in\Sigma\times\R^{+}:0\leq t\leq\tau(\omega)\right\} \slash\sim,\]
 where we identify points via the equivalence relation given by $\left(w,\tau(w)\right)\sim\left(\sigma(w),0\right)$.
Let $\Phi=\left(\varphi_{t}\right)_{t\in\R_{>0}}$ be the special
semi-flow over $Y$ with height function $\tau$ defined by $\varphi_{t}(\omega,s)=(\omega,s+t)$.

Savchenko was the first in \citep{MR1627271} to define the topological
entropy of special flows for non-compact spaces to be\[
h\left(\Phi\right):=\sup\left\{ h\left(\varphi_{1},\mu\right):\mu\in\mathcal{E}_{\Phi}^{1}\right\} ,\]
where\foreignlanguage{ngerman}{ $\mathcal{E}_{\Phi}^{1}$} denotes
the set of $\Phi$-invariant ergodic probability measures on $Y$.
Savchenko then proved in \citep{MR1627271} that, if $\tau$ depends
only on the first coordinate and, then we have \[
h\left(\Phi\right)=\inf\left\{ \beta:\sum_{\omega\in\Sigma_{a}^{\mathrm{smpl}}}e^{-\beta S_{|\omega|}\tau}\le1\right\} ,\]
where $\Sigma_{a}^{\mathrm{smpl}}:=\left\{ \omega\in\Sigma_{a}^{\mathrm{per}}:\forall k\in\N:1<k\leq|\omega|:\omega_{k}\neq a\right\} $. 

This fact has been employed several times in the context of subgroups
of the modular group; e.g.\ in \citep{MR1901076}, \citep{Iommi2010}.
In \citep{DastjerdiLamei,DastjerdiLameib} some techniques have been
developed for computing the entropy of certain flows explicitly.

As a natural generalisation we may define the topological pressure
of a continuous function $g:Y\rightarrow\R$ with respect to the special
flow to be \[
\mathbf{P}\left(g\mid\Phi\right):=\sup\left\{ h\left(\varphi_{1},\mu\right)+\int g\, d\mu:\mu\in\mathcal{E}_{\Phi}^{1}\:\mbox{such that }g\in L^{1}\left(\mu\right)\right\} .\]
This gives in particular $h\left(\Phi\right)=\mathbf{P}\left(0\mid\Phi\right)$.

With $\Delta_{g}:\Sigma\rightarrow\R$ given by $\Delta_{g}\left(\omega\right):=\int_{0}^{\tau\left(\omega\right)}g\left(\omega,t\right)d\lambda\left(t\right)$
we are in the position to state our main result concerning special
flows. 
\begin{thm}
\label{thm:taupressure_is_entropy}If $\tau:\Sigma\rightarrow\R_{>0}$
is Hölder continuous satisfying $\sum_{i=0}^{\infty}\tau\circ\sigma^{i}=\infty$
and $g:Y\rightarrow\R$ has the property that $\Delta_{g}:\Sigma\rightarrow\R$
is Hölder continuous then we have\[
\mathbf{P}\left(g\mid\Phi\right)=\sup_{a\in I}\mathcal{P}_{\tau}\left(\Delta_{g},\Sigma_{a}^{\mathrm{per}}\right)=\sup_{{Y\supset C\textrm{ compact,}\atop \Phi\textrm{-invariant}}}\mathbf{P}\left(g\big|_{C}\mid\Phi\big|_{C}\right).\]
In particular, if $A$ is irreducible (i.e. $\forall i,j\in I$ $\exists\omega\in\Sigma^{*}:$
$i\omega j\in\Sigma^{*}$) then $\mathbf{P}\left(g\mid\Phi\right)=\mathcal{P}_{\tau}\left(\Delta_{g},\Sigma_{a}^{\mathrm{per}}\right)$
for any $a\in I$. 
\end{thm}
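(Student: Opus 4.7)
The plan is to prove the theorem by combining three ingredients: Abramov's formula relating the flow-theoretic quantities on $Y$ to shift-invariant ones on $\Sigma$, the variational principle for the induced pressure provided by Proposition \ref{pro:variationalprinciple-1-1}, and a compact exhaustion argument based on finite alphabet restrictions.

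First I would recall the standard bijection $\mu \leftrightarrow \mu_\Phi$ between $\sigma$-invariant ergodic probability measures on $\Sigma$ with $\int \tau\, d\mu<\infty$ and $\Phi$-invariant ergodic probability measures on $Y$, realised as $\mu_\Phi = (\mu\times\lambda)|_Y / \int \tau\, d\mu$. Abramov's formula then yields
\[
h(\varphi_1,\mu_\Phi) + \int g\, d\mu_\Phi \;=\; \frac{h(\sigma,\mu) + \int \Delta_g\, d\mu}{\int \tau\, d\mu},
\]
so that $\mathbf{P}(g\mid\Phi)$ equals the supremum of the right-hand side over admissible $\mu$. Since $\Delta_g \in L^1(\mu_\Phi)$ is equivalent to $\Delta_g\in L^1(\mu)$, the bijection preserves the class over which the supremum is taken.

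Next I would identify this supremum with $\sup_{a\in I}\mathcal{P}_\tau(\Delta_g,\Sigma_a^{\mathrm{per}})$. By Corollary \ref{cor:pressure_rescaling} (and the accompanying infimum formula \ref{cor:pressureformula_ifstreactlydescreasing}) together with the variational principle of Proposition \ref{pro:variationalprinciple-1-1} applied to $\varphi-\beta\tau$ on the irreducible component containing $a$, one gets
\[
\mathcal{P}_\tau(\Delta_g,\Sigma_a^{\mathrm{per}}) \;=\; \sup_\mu \frac{h(\sigma,\mu)+\int\Delta_g\, d\mu}{\int \tau\, d\mu},
\]
where the supremum runs over ergodic $\mu$ concentrated on that component. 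Taking $\sup_{a\in I}$ covers every ergodic $\sigma$-invariant measure via the component of any point in its support, producing the first equality. The irreducibility case follows because then all the components coincide and the $\sup_a$ is attained at every $a$.

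For the third equality with compact $\Phi$-invariant subsets $C\subset Y$, I would exhaust $\Sigma$ by finite-alphabet, finitely irreducible subsystems $\Sigma_F$ for $F\subset I$ finite and set $C_F := \{(\omega,t):\omega\in\Sigma_F,\ 0\le t\le\tau(\omega)\}/\!\sim$, each of which is compact because $\tau$ is Hölder (hence bounded on $[\omega]$ for each cylinder). On $C_F$ the classical variational principle and Bowen-Walters-type theory for special flows over compact shifts apply, and the monotonicity and exhaustion properties collected in Proposition \ref{fac:pressure} give $\sup_F\mathbf{P}(g|_{C_F}\mid\Phi|_{C_F})=\sup_{a\in I}\mathcal{P}_\tau(\Delta_g,\Sigma_a^{\mathrm{per}})$; the reverse inequality $\mathbf{P}(g\mid\Phi)\ge\mathbf{P}(g|_C\mid\Phi|_C)$ for every compact invariant $C$ is immediate from the definition.

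The main obstacle will be handling the non-compactness in conjunction with the hypothesis that $\tau$ is merely Hölder continuous (not bounded away from zero or infinity). Establishing the variational identity for the induced pressure requires careful treatment of the integrability $\int\tau\, d\mu<\infty$ and of $\Delta_g\in L^1(\mu)$, and the exhaustion step must ensure that no mass is lost when passing from the ergodic-component supremum to the compact subsystems. The assumption $\sum_{i\ge 0}\tau\circ\sigma^i=\infty$ is needed precisely to rule out escape of mass to infinity through orbits of finite total height; I would use it to verify that every candidate ergodic measure on $Y$ indeed corresponds to a $\sigma$-invariant measure on $\Sigma$ and not merely to a measure supported on wandering orbits.
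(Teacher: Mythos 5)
Your outline correctly identifies the three-stage structure that the paper uses: a variational expression for $\mathbf{P}(g\mid\Phi)$ over shift-invariant measures on the base (the paper's Lemma \ref{lem:entropylemma}), the variational principle for the induced pressure (Proposition \ref{pro:variationalprinciple-1-1}) applied to irreducible components, and the identification $\sup_K\mathcal{P}_{\tau,K}(\Delta_g,\Sigma^*)=\sup_a\mathcal{P}_\tau(\Delta_g,\Sigma_a^{\mathrm{per}})$ (Corollary \ref{cor:urbgur-criterion}(2)) for the compact exhaustion. However, the first step contains an error that sits exactly where the real work of the proof lies.

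You assert a bijection between $\Phi$-invariant ergodic probability measures on $Y$ and $\sigma$-invariant ergodic \emph{probability} measures $\mu$ on $\Sigma$ with $\int\tau\,d\mu<\infty$. That is false when $\tau$ is not bounded away from zero: the correct correspondence is with $\sigma$-finite ergodic measures $\mu$ (not normalised, possibly of infinite total mass) satisfying $\int\tau\,d\mu<\infty$. So Abramov's formula only gives you a supremum over the set $\mathcal{E}_\sigma(\tau)$ of possibly infinite measures, and the nontrivial content of the paper's Lemma \ref{lem:entropylemma} is precisely that this supremum coincides with the one over $\mathcal{E}^1_\sigma(\tau)$. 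That step needs: Hopf's criterion (via $\sum_{i\ge0}\tau\circ\sigma^i=\infty$, which ensures conservativity of the infinite measures rather than ruling out ``wandering orbits'' as you suggest), Krengel's definition of entropy for conservative $\sigma$-finite transformations, inducing onto a cylinder $[a]$ of finite positive $\mu$-mass to turn the system into a full shift over the return words, and a two-parameter family of Bernoulli approximations $\mu_{n,k}^*$ that simultaneously control entropy (via finite partitions) and the integrals of $\tau^*$ and $\Delta_g^*$ (via Kac's formula and the H\"older continuity of the induced potentials). Your final paragraph names the obstacle, but a complete proof must actually carry out this approximation; without it the claimed variational identity for $\mathbf{P}(g\mid\Phi)$ over $\mathcal{E}^1_\sigma(\tau)$ is unsupported.
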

The proof of Theorem \ref{thm:taupressure_is_entropy} will be postponed
to Section \ref{sec:Proof Special-semi-flows}. It relies on some
inducing techniques which allow us to overcome the difficulty of certain
marginal measures on the base space to be infinite (see Lemma \ref{lem:entropylemma}).
\begin{rem}
\label{rem:BarreiraIommi} Using Corollary \ref{cor:ExhaustGurevich}
we find that our definition of pressure for special flows in fact
coincides with the definition in \citep{MR2256622} of Barreira and
Iommi, who considered a mixing base space $\left(\Sigma,\sigma\right)$,
a height function $\tau$ bounded away from zero, and a function $g:Y\rightarrow\R$
such that $\Delta_{g}$ is Hölder continuous. That is\[
\mathbf{P}\left(g\mid\Phi\right)=\inf\left\{ \beta\in\mathbb{R}:\mathcal{P}_{1}\left(\Delta_{g}-\beta\tau,\Sigma_{a}^{\mathrm{per}}\right)\le0\right\} .\]
As a consequence of Theorem \ref{thm:taupressure_is_entropy} and
Corollary \ref{cor:ExhaustGurevich} we obtain that the above formula
also holds true for the case where $A$ is only irreducible, and $\tau$
is not necessarily bounded away from zero and --~different from Savchenko~--
may depend on more than one coordinate. 
\end{rem}
Exhausting principles, group extensions, and amenability

Another important feature of the induced pressure is the possibility
to restrict appropriately to certain collections $\mathcal{C}\subset\Sigma^{*}$
of finite words. As we will discuss next, this provides us with a
powerful tool to study for instance the structure of infinite group
extensions. 

Let us now focus on the  relations for different choices of subcollections
$\mathcal{C}$, $\mathcal{C}'\subset\Sigma^{*}$. Recall from \citep{MR1732376,MR1853808}
that a sufficient condition for $\mathcal{P}_{1}\left(\varphi,\Sigma_{a}^{\mathrm{per}}\right)$
and $\mathcal{P}_{1}\left(\varphi,\Sigma^{*}\right)$ to coincide
is that the Markov shift is finitely irreducibe. As a general principle
we will show in Corollary \ref{cor:urbgur-criterion} that \[
\mathcal{P}_{\psi}\left(\varphi,\Sigma^{*}\right)=\mathcal{P}_{\psi}\left(\varphi,\Sigma_{a}^{\mathrm{per}}\right)\iff\left(\psi,\varphi,\Sigma^{*}\right)\,\mbox{satisfies the exhausting principle}.\]
We say that the \emph{exhausting principle} holds for $\left(\psi,\varphi,\mathcal{C}\right)$,
if there exists a sequence $\left(K_{n}\right)_{n\in\N}$ of compact
$\sigma$-invariant subsets of $\Sigma$, such that \[
\lim_{n\to\infty}\mathcal{P}_{\psi,K_{n}}\left(\varphi,\mathcal{C}\right)=\mathcal{P}_{\psi}\left(\varphi,\mathcal{C}\right),\:\mbox{ where }\mathcal{P}_{\psi,K}\left(\varphi,\mathcal{C}\right):=\mathcal{P}_{\psi\big|_{K}}\left(\varphi\big|_{K},\mathcal{C}\cap K^{*}\right),\]
with $K^{*}:=\left\{ \omega\in\Sigma^{*}:\left[\omega\right]\cap K\neq\emptyset\right\} .$
A sufficient condition for the exhausting principle to hold for $\left(\psi,\varphi,\Sigma^{*}\right)$
and arbitrary Hölder continuous functions $\psi>0$ and $\varphi$
is that $A$ is finitely irreducible (see Corollary \ref{cor:exhausting-if-finitely-irred})
and hence this fact gives a generalisation of the observation in \citep[Theorem 2.1.5]{MR2003772}
to our context. However, Example \ref{exa:renewalshift} will show
that this condition is not necessary.

In fact, the above observation is only a special case of a more general
setting. For this we consider collections $\mathcal{C}'\subset\mathcal{C}\subset\Sigma^{*}$,
which are representable by so-called $\mathcal{C}$-loops (see Definition
\ref{def:representablebyloops}). See Theorem \ref{thm:exhaustable-part}
for the corresponding main results. One of our purposes to introduce
this general framework is to study skew product dynamical systems
connected to infinite group extensions. More precisely, we consider
certain shift spaces involving the structure of a finitely generated
group $G:=\mathbb{F}_{k}/N$, where $\mathbb{F}_{k}$ denotes the
free group with $k\in\N$ generators. For a  natural choice of $\mathcal{C}'\subset\mathcal{C}$
of finite words we obtain in Theorem \ref{thm:-Group-Extension} that
\begin{eqnarray*}
G\,\mbox{is amenable} & \iff & \left(1,0,\mathcal{C}\right)\,\mbox{satisfies the exhausting principle}\\
 & \iff & \mathcal{P}_{1}\left(0,\mathcal{C}\right)=\mathcal{P}_{1}\left(0,\mathcal{C}'\right).\end{eqnarray*}
Moreover, we have that  $\mathcal{C}$ is finitely irreducible (see
Definition \ref{def:Finitely_irreducible_C}), if and only if the
group $G$ is finite. We would like to point out that Theorem \ref{thm:-Group-Extension}
provides us with a large class of interesting examples for which the
exhausting principle holds, while the underlying Markov shift is far
from being finitely irreducible. In Example \ref{exa:Z-extension}
we elaborate this idea explicitly.

\section{Basic properties of the induced pressure }

In this section we will investigate the basic properties of the $\psi$--induced
pressure for non-negative scaling functions $\psi:\Sigma\to\R$ and
subsets $\mathcal{C}\subset\Sigma^{*}$. 

We call a function $f:\Sigma\rightarrow\R$ $\alpha$-Hölder continuous,
if there exists $\alpha>0$ and a constant $V_{\alpha}\left(f\right)$,
such that for all $\omega,\omega'\in\Sigma$ \begin{equation}
\left|f\left(\omega\right)-f\left(\omega'\right)\right|\le e^{-\alpha}V_{\alpha}\left(f\right)d_{\alpha}\left(\omega,\omega'\right).\label{eq:f-hoeldercontinuous-def}\end{equation}
We call $f$ Hölder continuous, if there exists $\alpha>0$ such that
$f$ is $\alpha$-Hölder continuous.

The bounded distortion lemma (see e.g.\ \citep[Lemma 2.3.1]{MR2003772})
shows that for a Hölder continuous function $f$ we have for every
$\omega\in\Sigma^{*}$ and $\tau,\tau'\in\mbox{\ensuremath{\left[\omega\right]}}$\begin{equation}
\left|S_{\left|\omega\right|}f\left(\tau\right)-S_{\left|\omega\right|}f\left(\tau'\right)\right|\le\frac{V_{\alpha}\left(f\right)}{e^{\alpha}-1}d_{\alpha}\left(\sigma^{\left|\omega\right|}\left(\tau\right),\sigma^{\left|\omega\right|}\left(\tau'\right)\right).\label{eq:boundeddistortion-lemma}\end{equation}
This in particular implies the existence of a constant $C_{f}>0$,
such that \begin{equation}
\left|S_{\left|\omega\right|}f\left(\tau\right)-S_{\left|\omega\right|}f\left(\tau'\right)\right|\le C_{f}\label{eq:boundeddistortionproperty}\end{equation}
for all $\omega\in\Sigma^{*}$ and $\tau,\tau'\in\left[\omega\right]$.
We will refer to (\ref{eq:boundeddistortionproperty}) as the \emph{bounded
distortion property}. 
\begin{rem}
\label{rem:def_forhoelderpotentials} If $\varphi$ and $\psi$ are
both Hölder continuous and $\mathcal{C}\subset\Sigma^{*}$, then in
the definition of $\mathcal{P}_{\psi}\left(\varphi,\mathcal{C}\right)$
we may replace $S_{\omega}\varphi$ by $\inf_{\tau\in\left[\omega\right]}\sum_{k=0}^{|\omega|-1}\varphi\circ\sigma^{k}\left(\tau\right)$
without altering the value of the pressure. If, additionally, $\mathcal{C}:=\Sigma^{\mathrm{per}}$
or $\Sigma_{a}^{\mathrm{per}}$, $a\in I$, then we may also use $\sum_{k=0}^{|\omega|-1}\varphi\circ\sigma^{k}\left(\overline{\omega}\right)$
for $\omega\in\Sigma^{\mathrm{per}}$. Independent of the particular
choice for $S_{\omega}\varphi$ we may also choose $S_{\omega}\psi$
in an analogue fashion. 

Also notice, that, if $\varphi$ and $\psi$ are both Hölder continuous,
then we have for the same reason $\mathcal{P}_{\psi,K}\left(\varphi,\mathcal{C}\right)=\mathcal{P}_{\psi}\left(\varphi,\mathcal{C}\cap K^{*}\right)$
for all $\sigma$-invariant subsets $K\subset\Sigma$.
\end{rem}
The following properties of the induced pressure are similar to the
well-known properties of the classical topological pressure (cf.\
\citep[Sec. 9.2]{MR648108}). 
\begin{prop}
\label{fac:pressure}Let $\varphi,\varphi_{1},\varphi_{2},\psi,\psi_{1},\psi_{2}$
be real functions on $\Sigma$. The induced topological pressure has
the following basic properties.
\begin{enumerate}
\item \label{enu:(Monotonicity)}\emph{(Monotonicity)} For $\varphi_{1}\le\varphi_{2}$,
$\psi_{2}\ge\psi_{1}\ge0$, $\mathcal{C}_{1}\subset\mathcal{\mathcal{C}}_{2}\subset\Sigma^{*}$,
and $K_{1}\subset K_{2}\subset\Sigma$ $\sigma$-invariant, we have,
\[
\mathcal{P}_{\psi_{1},K_{1}}\left(\varphi_{1},\mathcal{C}_{1}\right)\le\mathcal{P}_{\psi_{2},K_{2}}\left(\varphi_{2},\mathcal{\mathcal{C}}_{2}\right).\]

\item \label{enu:(Continuity)}\emph{(Continuity)} If $\psi\ge c>0$, or
$\psi>0$ and $\varphi$ are both Hölder continuous and $\mathcal{C}=\Sigma^{*}$,
then \[
\mathcal{P}_{\left(\cdot\right)}\left(\cdot,\mathcal{C}\right):C\left(\Sigma,\R\right)\times C\left(\Sigma,\R\right)\rightarrow\overline{\R}\]
 is continuous in $\left(\varphi,\psi\right)$ with respect to the
product topology, where the set of continuous functions $C\left(\Sigma,\R\right)$
is equipped with the $\Vert\cdot\Vert_{\infty}$--norm.
\item \label{enu:(Convexity)}\emph{(Convexity)} For $\psi\ge0$ and $\varphi_{1},\varphi_{2}\in C\left(\Sigma,\R\right)$
with $\mathcal{P}_{\psi}\left(\varphi_{i},\mathcal{C}\right)>-\infty$,
for $i\in\left\{ 1,2\right\} $ and $t\in\left(0,1\right)$ we have
\[
\mathcal{P}_{\psi}\left(t\varphi_{1}+\left(1-t\right)\varphi_{2},\mathcal{C}\right)\le t\mathcal{P}_{\psi}\left(\varphi_{1},\mathcal{C}\right)+\left(1-t\right)\mathcal{P}_{\psi}\left(\varphi_{2},\mathcal{C}.\right)\]

\item \label{enu:(Translation)}\emph{(Translation)} If $0<m\le\psi\le M$
then \[
\mathcal{P}_{\psi}\left(\varphi,\mathcal{C}\right)+c/M\le\mathcal{P}_{\psi}\left(\varphi+c,\mathcal{C}\right)\le\mathcal{P}_{\psi}\left(\varphi,\mathcal{C}\right)+c/m.\]
 
\item \label{enu:(Subadditivity)}\emph{(Subadditivity)} We have \[
\mathcal{P}_{\psi}\left(\varphi_{1}+\varphi_{2},\mathcal{C}\right)\le\mathcal{P}_{\psi}\left(\varphi_{1},\mathcal{C}\right)+\mathcal{P}_{\psi}\left(\varphi_{2},\mathcal{C}\right).\]
 
\item \label{enu:(Stability)}\emph{(Stability)} For $\mathcal{C}_{i}\subset\Sigma^{*}$
, $i\in\left\{ 1,\dots,n\right\} $, we have \[
\mathcal{P}_{\psi}\left(\varphi,\bigcup_{i=1}^{n}\mathcal{C}_{i}\right)=\max\left\{ \mathcal{P}_{\psi}\left(\varphi,\mathcal{C}_{i}\right):i\in\left\{ 1,\dots,n\right\} \right\} .\]

\item \textup{\emph{\label{enu:(Subhomogeneity)}}}\textup{(Subhomogeneity)}\textup{\emph{
}}\textup{If $\mathcal{P}_{\psi}\left(\varphi,\mathcal{C}\right)\in\R$
then $\mathcal{P}_{\psi}\left(c\varphi,\mathcal{C}\right)\le c\mathcal{P}_{\psi}\left(\varphi,\mathcal{C}\right)$,
for $c\ge1$, and $\mathcal{P}_{\psi}\left(c\varphi,\mathcal{C}\right)\ge c\mathcal{P}_{\psi}\left(\varphi,\mathcal{C}\right)$,
for $c\le1$. }
\item \label{enu:(Bounded-cocycle)}\emph{(Bounded cocycle)} Let us write
$\varphi_{1}\sim\varphi_{2}$ for $\varphi_{1},\varphi_{2}\in C\left(\Sigma,\R\right)$,
if $\varphi_{1}$ and $\varphi_{2}$ are cohomologous in the class
of bounded continuous functions, i.e. there exists a bounded function
$h\in C\left(\Sigma,\R\right)$ such that $\varphi_{1}=\varphi_{2}+h-h\circ\sigma$.
For $\varphi_{1}\sim\varphi_{2}$, we have\[
\mathcal{P}_{\psi}\left(\varphi_{1},\mathcal{C}\right)=\mathcal{P}_{\psi}\left(\varphi_{2},\mathcal{C}\right),\]
if $\varphi\sim\psi$ then we have, for every $t\in\R$, \[
\mathcal{P}_{\psi}\left(t\varphi,\mathcal{C}\right)=t+\mathcal{P}_{\psi}\left(0,\mathcal{C}\right).\]

\end{enumerate}
\end{prop}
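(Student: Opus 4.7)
The plan is to derive all eight properties from elementary manipulations of the partition function
\[
Z_{T}(\psi,\varphi,\mathcal{C}):=\sum_{\omega\in\mathcal{C},\; T-\eta<S_{\omega}\psi\le T}\exp S_{\omega}\varphi,
\]
so that $\mathcal{P}_{\psi}(\varphi,\mathcal{C})=\limsup_{T\to\infty}T^{-1}\log Z_{T}(\psi,\varphi,\mathcal{C})$ inherits the relevant inequalities. Three basic facts will be used repeatedly: (i) $\limsup_{T}(a_{T}+b_{T})\le\limsup_{T}a_{T}+\limsup_{T}b_{T}$ for real sequences; (ii) $\limsup_{T}\max_{i\le n}a_{T}^{(i)}=\max_{i\le n}\limsup_{T}a_{T}^{(i)}$ for finitely many sequences; and (iii) whenever $m\le\psi\le M$, the admissibility constraint $T-\eta<S_{\omega}\psi\le T$ forces the length sandwich $(T-\eta)/M\le|\omega|\le T/m$.

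The bookkeeping items are essentially immediate. Monotonicity~(\ref{enu:(Monotonicity)}) in $\varphi$, $\mathcal{C}$ and $K$ drops out of non-negativity of $\exp S_{\omega}\varphi$ and the representation $\mathcal{P}_{\psi,K}=\mathcal{P}_{\psi}(\cdot,\cdot\cap K^{*})$. Stability~(\ref{enu:(Stability)}) is the chain $Z_{T}(\psi,\varphi,\bigcup_{i}\mathcal{C}_{i})\le\sum_{i=1}^{n}Z_{T}(\psi,\varphi,\mathcal{C}_{i})\le n\max_{i}Z_{T}(\psi,\varphi,\mathcal{C}_{i})$ combined with monotonicity and fact~(ii). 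The bounded-cocycle claim~(\ref{enu:(Bounded-cocycle)}) uses that $\sum_{k=0}^{|\omega|-1}(h-h\circ\sigma)\circ\sigma^{k}=h-h\circ\sigma^{|\omega|}$ is uniformly bounded by $2\Vert h\Vert_{\infty}$, so $Z_{T}(\psi,\varphi_{1},\mathcal{C})$ and $Z_{T}(\psi,\varphi_{2},\mathcal{C})$ agree up to a multiplicative constant; the second identity for $\varphi\sim\psi$ then follows by substituting $S_{\omega}(t\psi)=tS_{\omega}\psi\in(t(T-\eta),tT]$ into $Z_{T}$ and pulling the $T$-leading factor outside.

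The three analytic items rest on classical inequalities applied term-by-term. Convexity~(\ref{enu:(Convexity)}) is H\"older's inequality $\sum_{\omega}a_{\omega}^{t}b_{\omega}^{1-t}\le\bigl(\sum_{\omega}a_{\omega}\bigr)^{t}\bigl(\sum_{\omega}b_{\omega}\bigr)^{1-t}$ with $a_{\omega}=\exp S_{\omega}\varphi_{1}$ and $b_{\omega}=\exp S_{\omega}\varphi_{2}$, after using $S_{\omega}(t\varphi_{1}+(1-t)\varphi_{2})\le tS_{\omega}\varphi_{1}+(1-t)S_{\omega}\varphi_{2}$ from sub-additivity of $\sup$; finiteness of both pressures rules out $\infty-\infty$ when applying~(i). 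Subadditivity~(\ref{enu:(Subadditivity)}) follows from the crude bound $\sum_{\omega}a_{\omega}b_{\omega}\le\bigl(\sum_{\omega}a_{\omega}\bigr)\bigl(\sum_{\omega}b_{\omega}\bigr)$ for non-negative sequences combined with $\exp S_{\omega}(\varphi_{1}+\varphi_{2})\le\exp S_{\omega}\varphi_{1}\cdot\exp S_{\omega}\varphi_{2}$. Subhomogeneity~(\ref{enu:(Subhomogeneity)}) is the $\ell^{p}$-monotonicity $\sum a_{\omega}^{c}\le(\sum a_{\omega})^{c}$ for $c\ge 1$, reversed for $0<c\le 1$, applied to $a_{\omega}=\exp S_{\omega}\varphi$.

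The two quantitative items rely on the length sandwich~(iii). For translation~(\ref{enu:(Translation)}), replacing $\varphi$ by $\varphi+c$ multiplies each summand by $\exp(c|\omega|)\in[\exp(c(T-\eta)/M),\exp(cT/m)]$ for $c>0$ (the bounds swap for $c<0$), yielding the asymmetric sandwich after $T^{-1}\log\limsup$. Continuity~(\ref{enu:(Continuity)}) is the main obstacle: a uniform perturbation $\Vert\tilde\varphi-\varphi\Vert_{\infty},\Vert\tilde\psi-\psi\Vert_{\infty}<\varepsilon$ alters $S_{\omega}\varphi$ by at most $\varepsilon|\omega|=O(\varepsilon T/c)$, but the perturbation of $\psi$ also shifts the indexing shell $T-\eta<S_{\omega}\psi\le T$, so the summation set itself depends on $\psi$. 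I would cover the perturbed shell by $O(\varepsilon T)$ neighbouring unperturbed shells, estimate each by the unperturbed $Z_{T'}$, and absorb the polynomial factor and the $O(\varepsilon)$ multiplicative distortion of the exponent when dividing by $T$; letting $\varepsilon\to 0$ then yields continuity. For the second case in~(\ref{enu:(Continuity)}) ($\psi>0$ H\"older, $\mathcal{C}=\Sigma^{*}$, where~(iii) fails) I would instead invoke the variational identity $\mathcal{P}_{\psi}(\varphi,\Sigma^{*})=\inf\{\beta:\mathcal{P}_{1}(\varphi-\beta\psi,\Sigma^{*})\le 0\}$ from Corollary~\ref{cor:pressure_rescaling}, reducing the claim to continuity of the classical pressure $\mathcal{P}_{1}(\varphi-\beta\psi,\Sigma^{*})$ in $(\varphi,\psi)$ followed by pseudo-inversion in $\beta$.
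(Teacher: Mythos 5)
Your treatment of the routine items (monotonicity, translation, subadditivity, stability, subhomogeneity, bounded cocycle) is at essentially the same level of detail as the paper, which also dismisses them as immediate, and your H\"older-inequality proof of convexity is the same argument the paper gives (the paper additionally extracts a subsequence $T_k$ along which the admissibility shell is nonempty, to sidestep the $\log 0=-\infty$ issue, but this is cosmetic). One caution worth raising on monotonicity: your stated justification, non-negativity of $\exp S_{\omega}\varphi$ together with $\mathcal{P}_{\psi,K}=\mathcal{P}_{\psi}(\cdot,\cdot\cap K^{*})$, only settles monotonicity in $\varphi$ and $\mathcal{C}$. The dependence on $\psi$ (and on $K$, since restricting to $K$ changes $S_{\omega}\psi$) lives in the \emph{indexing shell} $T-\eta<S_{\omega}\psi\le T$, not merely in the summand, so passing from $\psi_{1}$ to $\psi_{2}\ge\psi_{1}$ changes which words contribute; this part is genuinely not a termwise comparison and deserves an argument (indeed, taking $\varphi=0$, $\mathcal{C}=\Sigma^{*}$ on a full shift with $\psi_{1}=1$, $\psi_{2}=2$ gives $\mathcal{P}_{\psi_{1}}=\log m>\tfrac{1}{2}\log m=\mathcal{P}_{\psi_{2}}$, so the $\psi$-part of the stated monotonicity is delicate at best, and your sketch does not engage with it).

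Continuity is where you diverge from the paper. The paper disposes of both cases at once by the forward reference to Corollary~\ref{cor:pressureformula_ifstreactlydescreasing}, writing $\mathcal{P}_{\psi}(\varphi,\mathcal{C})=\inf\{\beta:\mathcal{P}_{1}(\varphi-\beta\psi,\mathcal{C})\le0\}=\sup\{\beta:\mathcal{P}_{1}(\varphi-\beta\psi,\mathcal{C})\ge0\}$ and then using Lipschitz continuity of $\mathcal{P}_{1}(\cdot,\mathcal{C})$ in $\Vert\cdot\Vert_{\infty}$. Your shell-covering argument for the case $\psi\ge c>0$ is a legitimate, more elementary alternative, but it needs some care you do not supply: after covering the $\tilde\psi$-shell by $O(\varepsilon T)$ unperturbed $\eta$-shells centred at $T'\in[T(1-O(\varepsilon)),T(1+O(\varepsilon))]$, the bound $\frac{1}{T}\log Z_{T'}=\frac{T'}{T}\cdot\frac{1}{T'}\log Z_{T'}$ introduces a multiplicative factor $T'/T$ whose effect on the $\limsup$ depends on the sign of $\mathcal{P}_{\psi}(\varphi,\mathcal{C})$, and one must argue both the upper and lower perturbation bounds; the paper's reduction avoids all of this. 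For the second case you do fall back on the pseudo-inverse, but you cite Corollary~\ref{cor:pressure_rescaling}, which only gives the inequality ``$\ge$''; the equality you need is Corollary~\ref{cor:pressureformula_ifstreactlydescreasing}, which is what the paper actually invokes (and which, for this case, additionally requires finitely irreducible incidence matrix and a countability condition on $\psi^{-1}(\{0\})$ — hypotheses worth tracking if you pursue this route).
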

\begin{proof}
The proofs of (\ref{enu:(Translation)}), (\ref{enu:(Subadditivity)})
and (\ref{enu:(Subhomogeneity)}) are straightforward generalisations
of the corresponding statements for the classical pressure (cf.\
\citep[Sec. 9.2]{MR648108}). (\ref{enu:(Monotonicity)}), (\ref{enu:(Stability)})
and (\ref{enu:(Bounded-cocycle)}) follow immediately from the definition
of the $\psi$--induced pressure. Hence, let us only  address (\ref{enu:(Continuity)})
and (\ref{enu:(Convexity)}). 

ad (\ref{enu:(Continuity)}): By Corollary \ref{cor:pressureformula_ifstreactlydescreasing}
below we have \[
\mathcal{P}_{\psi}\left(\varphi,\mathcal{C}\right)=\inf\left\{ \beta:\mathcal{P}_{1}\left(\varphi-\beta\psi,\mathcal{C}\right)\le0\right\} =\sup\left\{ \beta:\mathcal{P}_{1}\left(\varphi-\beta\psi,\mathcal{C}\right)\ge0\right\} .\]
The assertion follows, since $\mathcal{P}_{1}\left(\cdot,\mathcal{C}\right):C\left(\Sigma,\R\right)\rightarrow\overline{\R}$
is continuous with respect to the $\Vert\cdot\Vert_{\infty}$-norm.

ad (\ref{enu:(Convexity)}): The assumption $\mathcal{P}_{\psi}\left(\varphi_{i},\mathcal{C}\right)>-\infty$
for $i\in\left\{ 1,2\right\} $ implies that there exists a sequence
$\left(T_{k}\right)$ tending to infinity such that the sets $\left\{ \omega\in\mathcal{C}:T_{k}-\eta<S_{\omega}\psi\le T_{k}\right\} $
are non empty and \[
\lim_{k\to\infty}\frac{1}{T_{k}}\log\!\!\!\!\!\!\sum_{{\omega\in\mathcal{C}\atop T_{k}-\eta<S_{\omega}\psi\le T_{k}}}\!\!\!\!\!\!\!\negthinspace\exp S_{\omega}\left(t\varphi_{1}+\left(1-t\right)\varphi_{2}\right)=\mathcal{P}_{\psi}\left(t\varphi_{1}+\left(1-t\right)\varphi_{2},\mathcal{C}\right).\]
By Hölder's inequality we have \begin{eqnarray*}
\sum_{{\omega\in\mathcal{C}\atop T_{k}-\eta<S_{\omega}\psi\le T_{k}}}\e^{S_{\omega}\left(t\varphi_{1}+\left(1-t\right)\varphi_{2}\right)} & \le & \sum_{{\omega\in\mathcal{C}\atop T_{k}-\eta<S_{\omega}\psi\le T_{k}}}\e^{S_{\omega}t\varphi_{1}}\e^{S_{\omega}\left(1-t\right)\varphi_{2}}\\
 & \le & \left(\sum_{{\omega\in\mathcal{C}\atop T_{k}-\eta<S_{\omega}\psi\le T_{k}}}\!\!\!\!\!\!\!\!\e^{S_{\omega}\varphi_{1}}\right)^{t}\left(\sum_{{\omega\in\mathcal{C}\atop T_{k}-\eta<S_{\omega}\psi\le T_{k}}}\!\!\!\!\!\!\!\!\e^{S_{\omega}\varphi_{2}}\right)^{1-t}.\end{eqnarray*}
Taking logarithm, dividing by $T_{k}$ completes the proof.\end{proof}
\begin{rem}
For the multifractal analysis in the setting of conformal iterated
function systems the free energy function studied in \citep{JaerischKessebohmer:09}
coincides with $\beta\mapsto\mathcal{P}_{-\zeta}\left(\beta\psi,\Sigma^{*}\right)$
by Corollary \ref{cor:pressure_rescaling} below, where $\zeta$ denotes
the geometric potential associated to the conformal iterated function
system and $\psi$ is the potential defining the level sets under
consideration. The Legendre transform of the free energy function
describes the dimension spectrum of the multifractal level sets. Proposition
\ref{fac:pressure} (\ref{enu:(Convexity)}) in particular implies
that the free energy function is convex, see also \citep[Lemma 3.1]{JaerischKessebohmer:09}.
\end{rem}
The main result in this section is that the $\psi$--induced pressure
coincides with a certain critical exponent of the partition function
as stated in the following theorem. We remark that \citep[Theorem 2.1.3]{MR2003772}
is covered by our theorem by choosing $\psi=1$ (see Remark \ref{rem:criticalexponent-without-limsup}
for further comments). We would like to remark that a similar connection
has been considered by Przytycki in \citep{MR1615954} to introduce
the notion of Poincaré exponent for rational functions.
\begin{thm}
[Critical exponent] \label{thm:pressure_as_inf} For potential functions
$\varphi,\psi:\Sigma\rightarrow\R$ satisfying $\psi\ge0$ we hav\textup{e\[
\mathcal{P}_{\psi}\left(\varphi,\mathcal{C}\right)=\inf\left\{ \beta\in\R:\limsup_{T\rightarrow\infty}\sum_{{\omega\in\mathcal{C}\atop T<S_{\omega}\psi}}\exp\left(S_{\omega}\varphi-\beta S_{\omega}\psi\right)<\infty\right\} .\]
}In\textup{\emph{ particular, the definition of $\mathcal{P}_{\psi}\left(\varphi,\mathcal{C}\right)$
is independent of the choice of $\eta>0$.}}\end{thm}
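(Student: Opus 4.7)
My plan is to establish both inequalities in $\mathcal{P}_{\psi}(\varphi,\mathcal{C}) = \beta^{*}$, where $\beta^{*}$ denotes the infimum on the right-hand side. The key device is a shell decomposition of the tail $\{\omega \in \mathcal{C}: S_{\omega}\psi > T\}$ into the disjoint slabs
\[
\mathcal{S}_{k}(T) := \{\omega \in \mathcal{C}: T+(k-1)\eta < S_{\omega}\psi \le T+k\eta\}, \quad k \in \N,
\]
each of which has exactly the form occurring in the definition of $\mathcal{P}_{\psi}$ at parameter $T+k\eta$. On each slab $|S_{\omega}\psi - (T+k\eta)| \le \eta$, so $\exp(-\beta S_{\omega}\psi)$ and $\exp(-\beta(T+k\eta))$ differ by at most the multiplicative factor $\e^{|\beta|\eta}$, uniformly for every $\beta \in \R$. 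This turns a tail partition function into a weighted sum of the partition functions of individual slabs.

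Write $\mathcal{P} := \mathcal{P}_{\psi}(\varphi,\mathcal{C})$. For the direction $\beta^{*} \le \mathcal{P}$, fix $\beta > \mathcal{P}$ and $\epsilon > 0$ with $\mathcal{P} + \epsilon < \beta$. By the definition of the limsup, $\sum_{\omega \in \mathcal{S}_{k}(T)} \exp S_{\omega}\varphi \le \e^{(\mathcal{P}+\epsilon)(T+k\eta)}$ as soon as $T+k\eta$ is large enough, so the slab bound yields
\[
\sum_{\omega \in \mathcal{C},\, S_{\omega}\psi > T} \exp\!\left(S_{\omega}\varphi - \beta S_{\omega}\psi\right) \;\le\; \e^{|\beta|\eta}\sum_{k=1}^{\infty} \e^{(\mathcal{P}+\epsilon-\beta)(T+k\eta)},
\]
a geometric series with ratio $\e^{(\mathcal{P}+\epsilon-\beta)\eta} < 1$ whose total tends to $0$ as $T \to \infty$. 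Hence $\beta \ge \beta^{*}$, and sending $\beta \downarrow \mathcal{P}$ gives $\beta^{*} \le \mathcal{P}$.

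For the reverse inequality, fix $\beta < \mathcal{P}$ and $\epsilon > 0$ with $\beta + 2\epsilon < \mathcal{P}$. The limsup in the definition of $\mathcal{P}$ supplies a sequence $T_{n} \to \infty$ with $\sum_{\omega \in \mathcal{S}_{1}(T_{n}-\eta)} \exp S_{\omega}\varphi \ge \e^{(\mathcal{P}-\epsilon)T_{n}}$; since $\mathcal{S}_{1}(T_{n}-\eta) \subset \{S_{\omega}\psi > T_{n}-\eta\}$, the slab estimate used now as a lower bound yields
\[
\sum_{\omega \in \mathcal{C},\, S_{\omega}\psi > T_{n}-\eta} \exp\!\left(S_{\omega}\varphi - \beta S_{\omega}\psi\right) \;\ge\; \e^{-|\beta|\eta}\,\e^{(\mathcal{P}-\epsilon-\beta)T_{n}} \;\longrightarrow\; \infty,
\]
so $\beta < \beta^{*}$, giving $\mathcal{P} \le \beta^{*}$. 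The extreme cases $\mathcal{P} = \pm\infty$ are handled by the same two computations with an arbitrary large constant $K$ in place of $\mathcal{P} \pm \epsilon$. Independence of $\eta > 0$ is then automatic because the right-hand side of the identity does not involve $\eta$. The only subtlety I anticipate is the need to allow $\beta$ of either sign, which is precisely why I use the symmetric factor $\e^{|\beta|\eta}$ rather than a one-sided bound, ensuring that the geometric series argument works regardless of the sign of $\beta$.
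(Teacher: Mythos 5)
Your proof is correct and takes essentially the same route as the paper's: the paper also uses the two-sided bound $\e^{-|\beta|\eta}\e^{-\beta n(\omega)\eta} \le \e^{-\beta S_\omega\psi} \le \e^{|\beta|\eta}\e^{-\beta n(\omega)\eta}$ on $\eta$-slabs, establishes the upper bound via a convergent geometric series when $\beta > \mathcal{P}_{\psi}(\varphi,\mathcal{C})+\epsilon$, and the lower bound by exhibiting a divergent subsequence of slab sums when $\beta < \mathcal{P}_{\psi}(\varphi,\mathcal{C})-\epsilon$. Your presentation folds the paper's preliminary equivalence between the $S_\omega\psi$-tail and the $n\eta$-tail directly into the slab estimates, but the core decomposition and both inequalities match the paper's argument.
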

\begin{proof}
Fix $\eta>0$. For $\omega\in\mathcal{C}$ let $n\left(\omega\right)$
denote the unique $n\in\N$, such that $\left(n-1\right)\eta<S_{\omega}\psi\left(\omega\right)\le n\eta$.
Observing that $\e^{-\beta n\left(\omega\right)\eta}\e^{-\left|\beta\right|\eta}\le\e^{-\beta S_{\omega}\psi}\le\e^{-\beta n\left(\omega\right)\eta}\e^{\left|\beta\right|\eta}$
for all $\omega\in\mathcal{C}$ we conclude that \[
\limsup_{T\rightarrow\infty}\sum_{{\omega\in\mathcal{C}\atop T<S_{\omega}\psi}}\exp\left(S_{\omega}\varphi-\beta S_{\omega}\psi\right)<\infty,\]
if and only if\[
\limsup_{N\rightarrow\infty}\sum_{n\ge N}\sum_{{\omega\in\mathcal{C}\atop \left(n-1\right)\eta<S_{\omega}\psi\le n\eta}}\exp\left(S_{\omega}\varphi-\beta n\eta\right)<\infty.\]
 Hence, it will be sufficient to verify that \[
\mathcal{P}_{\psi}\left(\varphi,\mathcal{C}\right)=\inf\left\{ \beta\in\R:\limsup_{N\rightarrow\infty}\sum_{n\ge N}\sum_{{\omega\in\mathcal{C}\atop \left(n-1\right)\eta<S_{\omega}\psi\le n\eta}}\exp\left(S_{\omega}\varphi-\beta n\eta\right)<\infty\right\} .\]
For the {}``$\le$'' part we will show that for every $\beta\in\R$
and $\epsilon>0$ such that $\beta<\mathcal{P}_{\psi}\left(\varphi,\mathcal{C}\right)-\epsilon$,
we have for all $N\in\N$, \begin{equation}
\sum_{n\ge N}\sum_{{\omega\in\mathcal{C}\atop \left(n-1\right)\eta<S_{\omega}\psi\le n\eta}}\exp\left(S_{\omega}\varphi-\beta n\eta\right)=\infty.\label{eq:criticalexponent-divergence}\end{equation}
By the definition of $\mathcal{P}_{\psi}\left(\varphi,\mathcal{C}\right)$
there exists a sequence $\left(T_{j}\right)_{j\in\N}$, such that
for every $j\in\N$, $T_{j+1}-T_{j}>\eta$ and \[
\frac{1}{T_{j}}\lg\sum_{{\omega\in\mathcal{C}\atop T_{j}-\eta<S_{\omega}\psi\le T_{j}}}\exp\left(S_{\omega}\varphi\right)\ge\beta+\epsilon.\]
For $\omega\in\mathcal{C}$ with $T_{j}-\eta<S_{\omega}\psi\le T_{j}$
we have $\left|n\left(\omega\right)\eta-T_{j}\right|<2\eta$ and hence
\begin{eqnarray*}
\sum_{n\ge N}\!\!\!\sum_{{\omega\in\mathcal{C}\atop \left(n-1\right)\eta<S_{\omega}\psi\left(\omega\right)\le n\eta}}\!\!\!\!\!\!\!\!\!\e^{S_{\omega}\varphi-\beta n\eta} & \ge & \sum_{j\in\N:T_{j}\ge N\eta}\sum_{{\omega\in\mathcal{C}\atop T_{j}-\eta<S_{\omega}\psi\le T_{j}}}\!\!\!\exp\left(S_{\omega}\varphi-\beta n\left(\omega\right)\eta\right),\\
 & \ge & \e^{-2\eta|\beta|}\!\!\!\sum_{j\in\N:T_{j}\ge N\eta}\sum_{{\omega\in\mathcal{C}\atop T_{j}-\eta<S_{\omega}\psi\le T_{j}}}\!\!\!\exp\left(S_{\omega}\varphi-\beta T_{j}\right)\\
 & \ge & \e^{-2\eta|\beta|}\sum_{j\in\N:T_{j}\ge N\eta}\exp\left(-\beta T_{j}\right)\exp\left(T_{j}\left(\beta+\epsilon\right)\right)\\
 & = & \infty.\end{eqnarray*}
This argument is not only valid for $\mathcal{P}_{\psi}\left(\varphi,\mathcal{C}\right)\in\R$,
but also for $\mathcal{P}_{\psi}\left(\varphi,\mathcal{C}\right)=\infty$,
in which case (\ref{eq:criticalexponent-divergence}) holds for every
$\beta\in\R$. 

For the {}``$\geq$'' part we first consider the case $\mathcal{P}_{\psi}\left(\varphi,\mathcal{C}\right)>-\infty$
and show that for every $\epsilon>0$ \[
\limsup_{N\rightarrow\infty}\sum_{n\ge N}\sum_{{\omega\in\mathcal{C}\atop \left(n-1\right)\eta<S_{\omega}\psi\le n\eta}}\exp\left(S_{\omega}\varphi-\left(\mathcal{P}_{\psi}\left(\varphi,\mathcal{C}\right)+\epsilon\right)n\eta\right)<\infty.\]
Again, by the definition of $\mathcal{P}_{\psi}\left(\varphi,\mathcal{C}\right)$
we find for all $\epsilon>0$ an element $N\in\N$, such that for
all $n\ge N$ \begin{equation}
\sum_{{\omega\in\mathcal{C}\atop \left(n-1\right)\eta<S_{\omega}\psi\le n\eta}}\exp S_{\omega}\varphi\le\exp\left(n\eta\left(\mathcal{P}_{\psi}\left(\varphi,\mathcal{C}\right)+\epsilon/2\right)\right).\label{eq:criticalexponent-upperbound}\end{equation}
Consequently, we have \begin{eqnarray*}
 &  & \!\!\!\!\!\!\!\!\!\!\!\!\!\negthinspace\!\!\!\!\!\!\!\!\!\!\!\!\!\negthinspace\!\!\!\!\!\!\!\!\!\!\sum_{n\ge N}\sum_{{\omega\in\mathcal{C}\atop \left(n-1\right)\eta<S_{\omega}\psi\le n\eta}}\exp\left(S_{\omega}\varphi-\left(\mathcal{P}_{\psi}\left(\varphi,\mathcal{C}\right)+\epsilon\right)n\eta\right)\\
 & \le & \sum_{n\ge N}\exp\left(-n\eta\left(\mathcal{P}_{\psi}\left(\varphi,\mathcal{C}\right)+\epsilon\right)\right)\exp\left(n\eta\left(\mathcal{P}_{\psi}\left(\varphi,\mathcal{C}\right)+\epsilon/2\right)\right)\\
 & = & \sum_{n\ge N}\exp\left(-n\eta\epsilon/2\right)<\infty.\end{eqnarray*}
For the case $\mathcal{P}_{\psi}\left(\varphi,\mathcal{C}\right)=-\infty$
we have for all $\rho\in\R$ and all $n\in\mathbb{N}$ sufficiently
large that $\sum_{{\omega\in\mathcal{C}\atop \left(n-1\right)\eta<S_{\omega}\psi\le n\eta}}\exp S_{\omega}\varphi\le\exp\left(n\eta\rho\right)$.
Hence, by the same arguments we conclude that for all $\beta\in\R$
and $N$ sufficiently large we have convergence of the series \[
\sum_{n\ge N}\sum_{{\omega\in\mathcal{C}\atop \left(n-1\right)\eta<S_{\omega}\psi\le n\eta}}\exp\left(S_{\omega}\varphi-\beta n\eta\right).\]
 \end{proof}
\begin{rem}
The previous proof in particular shows that it is sufficient to take
the limes superior with respect to the subsequence $T_{k}=\eta k$,
$k\in\N$, $\eta>0$. More precisely, for potential functions $\varphi,\psi:\Sigma\rightarrow\R$
with $\psi\ge0$ and for any $\eta>0$ we have \begin{eqnarray*}
\mathcal{P}_{\psi}\left(\varphi,\mathcal{C}\right) & = & \limsup_{n}\frac{1}{n\eta}\log\sum_{{\omega\in\mathcal{C}\atop \left(n-1\right)\eta<S_{\omega}\psi\le n\eta}}\exp S_{\omega}\varphi.\end{eqnarray*}

\end{rem}
The next Lemma gives an alternative way to describe the $\psi$--induced
pressure, which will prove to be useful subsequently.
\begin{lem}
\label{lem:pressure-alternativedef}Let $\varphi,\psi:\Sigma\rightarrow\R$
be Hölder continuous and $\psi\ge0$ and $\mathcal{C}\subset\Sigma^{*}$.
If $\mathcal{P}_{\psi}\left(\varphi,\mathcal{C}\right)\ge0$ then
for all sufficiently large $T>0$ we have \[
\mathcal{P}_{\psi}\left(\varphi,\mathcal{C}\right)=\limsup_{n\rightarrow\infty}\frac{1}{n}\log\sum_{{\omega\in\mathcal{C}\atop T<S_{\omega}\psi\le n}}\exp S_{\omega}\varphi,\]
and for $\mathcal{P}_{\psi}\left(\varphi,\mathcal{C}\right)<0$ we
have \[
\mathcal{P}_{\psi}\left(\varphi,\mathcal{C}\right)=\limsup_{n\rightarrow\infty}\frac{1}{n}\log\sum_{{\omega\in\mathcal{C}\atop n\le S_{\omega}\psi}}\exp S_{\omega}\varphi.\]
\end{lem}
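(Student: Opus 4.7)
The plan is to reduce both identities to the characterization proved in the remark following Theorem \ref{thm:pressure_as_inf}, which with $\eta=1$ reads
\[
\mathcal{P}_{\psi}\left(\varphi,\mathcal{C}\right)=\limsup_{k\to\infty}\frac{1}{k}\log Z_{k},\qquad Z_{k}:=\sum_{\substack{\omega\in\mathcal{C}\\ k-1<S_{\omega}\psi\le k}}\e^{S_{\omega}\varphi}.
\]
Set $p:=\mathcal{P}_{\psi}\left(\varphi,\mathcal{C}\right)$ and abbreviate by $Z^{T,n}$ and $W_{n}$ the two sums appearing in the statement of the lemma. Because the half-open unit intervals $\left(k-1,k\right]$ tile $\R$, one has from above $Z^{T,n}\le\sum_{k=\lfloor T\rfloor+1}^{\lceil n\rceil}Z_{k}$ and $W_{n}\le\sum_{k\ge n}Z_{k}$, and from below $Z^{T,n}\ge Z_{k}$ for every integer $k$ with $k-1\ge T$ and $k\le n$, respectively $W_{n}\ge Z_{k}$ for every integer $k\ge n+1$. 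With these comparisons in place, both identities reduce to a geometric-series estimate plus a single subsequence argument.

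For the first identity, assume $p\ge 0$, fix $\epsilon>0$, and pick $N_{0}\in\N$ such that $Z_{k}\le\e^{k\left(p+\epsilon\right)}$ for every $k\ge N_{0}$; declare any $T\ge N_{0}$ to be ``sufficiently large''. Summing the upper comparison yields $Z^{T,n}\le C\,\e^{\left(n+1\right)\left(p+\epsilon\right)}$, with $C$ depending on $p,\epsilon$ but not on $n$; the borderline sub-case $p=0$, $\epsilon>0$ only produces a harmless polynomial prefactor after taking logarithms. Hence $\limsup_{n}\tfrac{1}{n}\log Z^{T,n}\le p+\epsilon$. For the matching lower bound pick a subsequence $\left(n_{j}\right)$ with $\tfrac{1}{n_{j}}\log Z_{n_{j}}\to p$ and observe that, once $n_{j}$ is large compared with $T$, the lower comparison gives $Z^{T,n_{j}}\ge Z_{n_{j}}$; thus $\limsup_{n}\tfrac{1}{n}\log Z^{T,n}\ge p$. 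Letting $\epsilon\downarrow 0$ finishes this case.

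For the second identity, assume $p<0$, fix $\epsilon>0$ with $p+\epsilon<0$, and pick $N_{0}$ as before. The upper comparison is now a convergent geometric tail,
\[
W_{n}\le\sum_{k\ge n}\e^{k\left(p+\epsilon\right)}=\frac{\e^{n\left(p+\epsilon\right)}}{1-\e^{p+\epsilon}},
\]
which gives $\limsup_{n}\tfrac{1}{n}\log W_{n}\le p+\epsilon$. For the lower bound use $W_{n_{j}-1}\ge Z_{n_{j}}$ along a subsequence realising $p$, together with $n_{j}/\left(n_{j}-1\right)\to 1$. The degenerate case $p=-\infty$ is handled by running the same upper estimate with an arbitrarily large negative constant $-M$ in place of $p+\epsilon$ and then letting $M\to\infty$.

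The only genuinely delicate point is the role of ``sufficiently large $T$'' in the first identity. The lower bound imposes no restriction on $T$, but for the upper bound $T\ge N_{0}$ is needed, both so that the individual $Z_{k}$ with $k>T$ are a priori finite (ensuring $Z^{T,n}<\infty$) and so that the exponential control $Z_{k}\le\e^{k\left(p+\epsilon\right)}$ is in force throughout the summation range, rather than being spoiled by low-index terms. Once $T$ sits in the tail, the geometric comparison carries the remaining work, and no mixing or H\"older hypothesis enters beyond the single invocation of the remark after Theorem \ref{thm:pressure_as_inf}.
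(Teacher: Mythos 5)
Your proof is correct and follows essentially the same route as the paper: both start from the integer-step characterisation of the pressure (the remark after Theorem \ref{thm:pressure_as_inf}, i.e.\ display \eqref{eq:LimsupBoundForPressure} with $\eta=1$), bound $Z^{T,n}$ and $W_n$ from above by the resulting geometric sums, and observe that the lower bound is immediate; the paper simply says ``the lower bound is immediate'' where you spell out the subsequence argument, and it leaves the role of ``sufficiently large $T$'' implicit where you make it explicit, but there is no difference in substance.
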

\begin{proof}
By the definition of $\mathcal{P}_{\psi}\left(\varphi,\mathcal{C}\right)$
we find for $\epsilon>0$ an element $N\in\N$, such that for all
$n\ge N$ \begin{equation}
\sum_{{\omega\in\mathcal{C}\atop n-1<S_{\omega}\psi\le n}}\exp S_{\omega}\varphi\le\exp\left(n\left(\mathcal{P}_{\psi}\left(\varphi,\mathcal{C}\right)+\epsilon\right)\right).\label{eq:LimsupBoundForPressure}\end{equation}
Let us begin with the case $\mathcal{P}_{\psi}\left(\varphi,\mathcal{C}\right)\ge0$.
Then (\ref{eq:LimsupBoundForPressure}) implies \[
\sum_{{\omega\in\mathcal{C}\atop N-1<S_{\omega}\psi\le n}}\!\!\!\e^{S_{\omega}\varphi}\le\sum_{k=N-1}^{n}\e^{k\left(\mathcal{P}_{\psi}\left(\varphi,\mathcal{C}\right)+\epsilon\right)}\le\frac{\exp\left(\left(n+1\right)\left(\mathcal{P}_{\psi}\left(\varphi,\mathcal{C}\right)+\epsilon\right)\right)-1}{\exp\left(\mathcal{P}_{\psi}\left(\varphi,\mathcal{C}\right)+\epsilon\right)-1}.\]
Since $\mathcal{P}_{\psi}\left(\varphi,\mathcal{C}\right)+\epsilon>0$
we may take logarithm and divide by $n$ to obtain the\textbf{ }upper
bound. 

For the case $\mathcal{P}_{\psi}\left(\varphi,\mathcal{C}\right)<0$
fix $\epsilon\in\left(0,-\mathcal{P}_{\psi}\left(\varphi,\mathcal{C}\right)\right)$.
This time (\ref{eq:LimsupBoundForPressure}) implies for $n\ge N$
\[
\sum_{{\omega\in\mathcal{C}\atop n\le S_{\omega}\psi}}\!\!\!\e^{S_{\omega}\varphi}\le\sum_{k=n}^{\infty}\e^{k\left(\mathcal{P}_{\psi}\left(\varphi,\mathcal{C}\right)+\epsilon\right)}=\frac{\exp\left(n\left(\mathcal{P}_{\psi}\left(\varphi,\mathcal{C}\right)+\epsilon\right)\right)}{1-\exp(\mathcal{P}_{\psi}\left(\varphi,\mathcal{C}\right)+\epsilon)},\]
and hence taking logarithm and dividing by $n$ finishes the proof
for the\textbf{ }upper bound. The lower bound is immediate in both
cases.\end{proof}
\begin{rem}
\label{rem:criticalexponent-without-limsup}We would like to compare
Theorem \ref{thm:pressure_as_inf} with \citep[Theorem 2.1.3]{MR2003772}
and also the classical definition of the Poincaré exponent. Under
the condition that $\limsup_{n}\sum_{{\omega\in\mathcal{C}\atop n-1<S_{\omega}\psi\le n}}\e^{S_{\omega}\varphi}<\infty$
implies that $\sum_{{\omega\in\mathcal{C}\atop n-1<S_{\omega}\psi\le n}}\e^{S_{\omega}\varphi}<\infty$
for all $n\in\N$ we have by Theorem \ref{thm:pressure_as_inf} \begin{equation}
\mathcal{P}_{\psi}\left(\varphi,\Sigma^{*}\right)=\inf\left\{ \beta\in\R:\sum_{\omega\in\Sigma^{*}}\exp\left(S_{\omega}\varphi-\beta S_{\omega}\psi\right)<\infty\right\} \label{eq:pressure-via-poincare}\end{equation}
and by Lemma \ref{lem:pressure-alternativedef}, for $\mathcal{P}_{\psi}\left(\varphi,\mathcal{C}\right)\ge0$,
\[
\mathcal{P}_{\psi}\left(\varphi,\mathcal{C}\right)=\limsup_{n\rightarrow\infty}\frac{1}{n}\log\sum_{{\omega\in\mathcal{C}\atop S_{\omega}\psi\le n}}\exp S_{\omega}\varphi.\]
Generalising the proof of \citep[Proposition 2.1.9]{MR2003772} this
is in particular the case for $A$ finitely irreducible, $\mathcal{C}=\Sigma^{*}$,
and $\psi\ge c>0$. In general, the identity (\ref{eq:pressure-via-poincare})
does not hold. For instance, consider the renewal shift with state
space $\N$ rooted at $1$, which is mixing but not finitely irreducible.
Next choose the potential functions $\varphi_{\left[k\right]}:=-\log k$
for $k\ge2$ and $\varphi_{|\left[1,n\right]}:=-2\log n$ for $n\ge1$.
Then $\mathcal{P}_{1}\left(\varphi,\Sigma^{*}\right)\le\frac{1}{2}\log\sum_{\left|\omega\right|=2}\e^{S_{\omega}\varphi}=\frac{1}{2}\log\left(\sum_{k\ge2}\e^{S_{\left(k,k-1\right)}\varphi}+\sum_{n\ge1}\e^{S_{\left(1,n\right)}\varphi}\right)=\frac{1}{2}\log\left(\sum_{k\ge2}1/k\left(k-1\right)+\sum_{n\ge1}1/n^{2}\right)<\infty$,
whereas for all $\beta\in\R$ we have $\sum_{\omega\in\Sigma^{*}}\e^{S_{\omega}\varphi-\beta\left|\omega\right|}\ge\sum_{k\ge2}1/k=\infty$
implying that in this situation the right hand side of (\ref{eq:pressure-via-poincare})
is equal to infinity. \end{rem}
\begin{cor}
\label{cor:pressure_rescaling} For potential functions $\varphi,\psi:\Sigma\rightarrow\R$,
$\psi\geq0$, we always have \begin{equation}
\mathcal{P}_{\psi}\left(\varphi,\mathcal{C}\right)\ge\inf\left\{ \beta\in\R:\mathcal{P}_{1}\left(\varphi-\beta\psi,\mathcal{C}\right)\le0\right\} .\label{eq:upperbound}\end{equation}
\end{cor}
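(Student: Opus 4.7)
My plan is to deduce the inequality from the critical-exponent characterization in Theorem \ref{thm:pressure_as_inf}. I would fix any $\beta > \mathcal{P}_\psi(\varphi, \mathcal{C})$ and apply Theorem \ref{thm:pressure_as_inf} to the $\psi$-induced pressure to obtain a threshold $T_0 > 0$ with
\[
\sum_{\omega \in \mathcal{C},\, S_\omega\psi > T_0} \exp\bigl(S_\omega\varphi - \beta S_\omega\psi\bigr) < \infty.
\]
Proving the corollary then reduces to showing $\mathcal{P}_1(\varphi - \beta\psi, \mathcal{C}) \le 0$, since then every such $\beta$ belongs to the set whose infimum is the right-hand side of \eqref{eq:upperbound}, and letting $\beta \searrow \mathcal{P}_\psi(\varphi, \mathcal{C})$ yields the claim.

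Using Theorem \ref{thm:pressure_as_inf} once more with the trivial scaling $\psi \equiv 1$, the condition $\mathcal{P}_1(\varphi - \beta\psi, \mathcal{C}) \le 0$ is equivalent to the finiteness of
\[
\sum_{\omega \in \mathcal{C},\, |\omega| > T} \exp\bigl(S_\omega\varphi - \beta S_\omega\psi - \gamma|\omega|\bigr)
\]
for every $\gamma > 0$ and all $T$ sufficiently large. I would split this sum according to whether $S_\omega\psi > T_0$ or $S_\omega\psi \le T_0$: on the first piece the trivial factor $e^{-\gamma|\omega|}\le 1$ bounds it by the tail sum obtained above, while on the second piece the uniform bound $\exp(-\beta S_\omega\psi)\le e^{|\beta| T_0}$ combined with $e^{-\gamma |\omega|}\le e^{-\gamma T}$ isolates a prefactor vanishing as $T \to \infty$.

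The main obstacle I anticipate is controlling the residual sum $\sum_{|\omega|>T,\,S_\omega\psi\le T_0}\exp(S_\omega\varphi-\gamma|\omega|)$ appearing in the second piece; its finiteness is the delicate point, and it is precisely here that the positive decay rate $\gamma>0$ must overpower any build-up of $\exp S_\omega\varphi$ along long words of bounded $\psi$-mass. Once this residual sum is handled, both contributions combine to yield $\mathcal{P}_1(\varphi - \beta\psi, \mathcal{C}) \le 0$ for every $\beta > \mathcal{P}_\psi(\varphi, \mathcal{C})$, and passing to the infimum of such $\beta$ gives the desired inequality.
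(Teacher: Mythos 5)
Your strategy mirrors the paper's: both proofs rest on the critical-exponent characterization of Theorem \ref{thm:pressure_as_inf}. The paper's proof of the corollary is a one-liner that simply asserts
\[
\inf\Bigl\{\beta:\limsup_{T\to\infty}\sum_{\omega\in\mathcal{C},\,S_\omega\psi>T}e^{S_\omega\varphi-\beta S_\omega\psi}<\infty\Bigr\}
\ \ge\
\inf\bigl\{\beta:\mathcal{P}_1(\varphi-\beta\psi,\mathcal{C})\le 0\bigr\}
\]
and cites Theorem \ref{thm:pressure_as_inf}; this is exactly the implication ``finiteness of the $\psi$-tail sum implies $\mathcal{P}_1(\varphi-\beta\psi,\mathcal{C})\le 0$'' that you are trying to establish. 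You make the step concrete by invoking Theorem \ref{thm:pressure_as_inf} a second time with scaling $1$ and splitting according to $S_\omega\psi\gtrless T_0$, and you then correctly identify that everything hinges on the residual sum $\sum_{|\omega|>T,\,S_\omega\psi\le T_0}\exp(S_\omega\varphi-\gamma|\omega|)$.

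That flagged gap is genuine, not a technicality you should expect to squeeze past with the hypotheses as stated. Under $\psi\ge 0$ alone, nothing controls the long words of bounded $\psi$-mass: for the full shift $\Sigma=\{1,2\}^{\mathbb N}$ with $\psi\equiv 0$, $\varphi\equiv 0$, $\mathcal{C}=\Sigma^*$, the tail condition holds vacuously for every $\beta$ (the sum runs over the empty set), yet $\mathcal{P}_1(\varphi-\beta\psi,\Sigma^*)=\log 2>0$ for all $\beta$, so the needed implication, and indeed the asserted inequality between infima, fails. The argument does close once $\psi$ is bounded away from zero, because then $S_\omega\psi\le T_0$ forces $|\omega|\le T_0/\inf\psi$ and the residual sum is empty for $T$ large; this extra hypothesis is present in essentially every later application in the paper but is not part of the corollary's statement, and the paper's own proof does not address the issue. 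Your instinct to single out the residual sum as ``precisely the delicate point'' is correct; absent a lower bound on $\psi$ you should not expect to remove it.
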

\begin{proof}
Since\begin{eqnarray*}
\inf\left\{ \beta\in\R:\limsup_{T\rightarrow\infty}\sum_{{\omega\in\mathcal{C}\atop T<S_{\omega}\psi}}\exp\left(S_{\omega}\varphi-\beta S_{\omega}\psi\right)<\infty\right\} \\
 &  & \!\!\!\!\!\!\!\!\!\!\!\!\!\!\!\!\!\!\!\!\!\!\!\!\!\!\!\!\!\!\!\!\!\!\geq\inf\left\{ \beta\in\R:\mathcal{P}_{1}\left(\varphi-\beta\psi,\mathcal{C}\right)\le0\right\} \end{eqnarray*}
the inequality (\ref{eq:upperbound}) follows by Theorem \ref{thm:pressure_as_inf}.
\end{proof}
We say that $\mathcal{P}_{\psi}\left(\varphi,\mathcal{C}\right)$
is given by the pseudo inverse (of the $1$--induced pressure), if
$\mathcal{P}_{\psi}\left(\varphi,\mathcal{C}\right)=\inf\left\{ \beta\in\R:\mathcal{P}_{1}\left(\varphi-\beta\psi,\mathcal{C}\right)\le0\right\} $. 
\begin{rem}
In general only {}``$\ge$'' holds as shown in Corollary \ref{cor:pressure_rescaling}.
In the following trivial example we have {}``$>$''. Consider $\Sigma$
with state space $\N$, incidence matrix $A_{ij}=\delta_{ij}$, and
the potential $\psi\left(\omega\right)=\log\left(\omega_{1}\right)$.
Then we have $\mathcal{P}_{\psi}\left(0,\Sigma^{*}\right)=\infty$,
whereas $\inf\left\{ \beta\in\R:\mathcal{P}_{1}\left(-\beta\psi,\Sigma^{*}\right)\le0\right\} =1$. 
\end{rem}
In the next two corollaries we give sufficient conditions for the
$\psi$--induced pressure to be given by the pseudo-inverse of the
$1$--induced pressure.
\begin{cor}
\label{cor:pressureformula_ifstreactlydescreasing} For potential
functions $\varphi,\psi:\Sigma\rightarrow\R$ with $\psi\ge0$ , such
that the map $\beta\mapsto\mathcal{P}_{1}\left(\varphi-\beta\psi,\mathcal{C}\right)$
is strictly decreasing on $\interior\left\{ \beta:\mathcal{P}_{1}\left(\varphi-\beta\psi,\mathcal{C}\right)<\infty\right\} $
we have \[
\mathcal{P}_{\psi}\left(\varphi,\mathcal{C}\right)=\inf\left\{ \beta\in\R:\mathcal{P}_{1}\left(\varphi-\beta\psi,\mathcal{C}\right)\le0\right\} =\sup\left\{ \beta\in\R:\mathcal{P}_{1}\left(\varphi-\beta\psi,\mathcal{C}\right)\ge0\right\} .\]
In particular, this is the case if $\psi\ge c>0$. Another condition
is also satisfied if $\mathcal{C}=\Sigma^{*}$, the potentials $\varphi$,
$\psi$ are Hölder continuous, the incidence matrix of $\Sigma$ is
finitely irreducible and $\psi^{-1}\left(\left\{ 0\right\} \right)$
is at most countable. \end{cor}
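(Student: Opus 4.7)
The plan is to establish the identity $\mathcal{P}_\psi(\varphi,\mathcal{C}) = \inf\{\beta : \mathcal{P}_1(\varphi-\beta\psi,\mathcal{C}) \le 0\} = \sup\{\beta : \mathcal{P}_1(\varphi-\beta\psi,\mathcal{C}) \ge 0\}$ under the strict monotonicity hypothesis, and then verify that each of the two explicit assumptions separately implies strict monotonicity. Write $F(\beta) := \mathcal{P}_1(\varphi-\beta\psi,\mathcal{C})$. Since $\psi \ge 0$, Proposition~\ref{fac:pressure}(\ref{enu:(Monotonicity)}) shows $F$ is non-increasing, so the infimum and supremum above satisfy $\inf \le \sup$; strict monotonicity on $\interior\{\beta : F(\beta) < \infty\}$ upgrades this to equality. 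The lower bound $\mathcal{P}_\psi(\varphi,\mathcal{C}) \ge \inf\{\beta : F(\beta) \le 0\}$ is exactly Corollary~\ref{cor:pressure_rescaling}, so only the upper bound $\mathcal{P}_\psi(\varphi,\mathcal{C}) \le \sup\{\beta : F(\beta) \ge 0\}$ remains.

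For the upper bound I would fix $\beta$ strictly greater than $\sup\{\beta' : F(\beta') \ge 0\}$; strict monotonicity then upgrades $F(\beta) \le 0$ to $F(\beta) < 0$. Theorem~\ref{thm:pressure_as_inf} applied to the $1$-induced pressure realises $F(\beta)$ as the infimum of an upward-closed set of admissible exponents, and since this infimum is strictly negative, any $s_0 \in (F(\beta),0)$ satisfies $\sum_{|\omega|>N}\exp(S_\omega(\varphi-\beta\psi) - s_0|\omega|) < \infty$ for $N$ large, yielding exponential decay $\sum_{|\omega|>N}\exp S_\omega(\varphi-\beta\psi) = O(e^{s_0 N})$. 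I then transfer this $|\omega|$-indexed tail estimate to an $S_\omega\psi$-indexed one, so that Theorem~\ref{thm:pressure_as_inf} applied in the reverse direction to the $\psi$-pressure places $\beta$ in $\{s : \limsup_T \sum_{S_\omega\psi > T}\exp(S_\omega\varphi - s S_\omega\psi) < \infty\}$, giving $\mathcal{P}_\psi(\varphi,\mathcal{C}) \le \beta$. Letting $\beta$ decrease to $\sup\{\beta' : F(\beta') \ge 0\}$ completes the identity.

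For the first sufficient condition, $\psi \ge c > 0$, the pointwise bound $\varphi - \beta_2\psi \le \varphi - \beta_1\psi - (\beta_2-\beta_1)c$ (for $\beta_1 < \beta_2$), combined with Proposition~\ref{fac:pressure}(\ref{enu:(Monotonicity)}) and the trivial constant-translation identity for the $1$-induced pressure, yields $F(\beta_2) \le F(\beta_1) - (\beta_2-\beta_1)c$, a strict decrease on $\{F < \infty\}$. For the second sufficient condition, Hölder continuity of $\psi$ together with countability of $\psi^{-1}(\{0\})$ supplies an admissible cylinder $[\nu]$ on which $\psi \ge \delta > 0$; finite irreducibility of the incidence matrix then lets one concatenate any $\omega \in \Sigma^*$ with a fixed word built from repetitions of $\nu$, via a uniformly bounded connector. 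Inserting such concatenations into the partition sums at $\beta_1$ and at $\beta_2$ produces a quantitative geometric gap proportional to $(\beta_2-\beta_1)\delta$, with the bounded-distortion estimate~(\ref{eq:boundeddistortionproperty}) controlling the error terms in the Birkhoff sums, and this delivers strict decrease of $F$.

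The main obstacle is the $|\omega|$-to-$S_\omega\psi$ transfer in the second paragraph. When $\psi$ has no positive lower bound, short admissible words may have arbitrarily large $S_\omega\psi$ (for instance if $\psi$ is unbounded across the countably many one-letter cylinders), so the tail $\sum_{S_\omega\psi > T}$ does not simply reduce to a tail in $|\omega|$, and the short-word contributions must be absorbed into the estimate. The slack provided by choosing $\beta$ strictly greater than $\sup\{\beta' : F(\beta') \ge 0\}$—giving $F(\beta) < 0$ rather than merely $F(\beta) \le 0$ thanks to strict monotonicity—is precisely what supplies the $O(e^{s_0 N})$ decay needed to dominate these short-word contributions, which is why strict monotonicity enters essentially as a hypothesis.
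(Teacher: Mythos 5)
Your overall architecture mirrors the paper's: strict monotonicity collapses the inf and the sup, Corollary~\ref{cor:pressure_rescaling} gives $\mathcal{P}_\psi(\varphi,\mathcal{C})\ge\inf\{\beta:\mathcal{P}_1(\varphi-\beta\psi,\mathcal{C})\le 0\}$, and the remaining inequality is extracted from Theorem~\ref{thm:pressure_as_inf}. The paper, too, closes the argument by asserting the inequality
$\inf\{\beta:\mathcal{P}_1(\varphi-\beta\psi,\mathcal{C})<0\}\ge\inf\{\beta:\limsup_T\sum_{S_\omega\psi>T}\exp(S_\omega\varphi-\beta S_\omega\psi)<\infty\}$
and then invoking Theorem~\ref{thm:pressure_as_inf} and Corollary~\ref{cor:pressure_rescaling}. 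So on the main identity you are taking essentially the same route.

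However, the step you flag as ``the main obstacle'' is not actually resolved by what you write. You observe that $F(\beta)<0$ yields, via Theorem~\ref{thm:pressure_as_inf} applied with scaling $1$, an exponential bound $\sum_{|\omega|>N}\exp S_\omega(\varphi-\beta\psi)=O(e^{s_0N})$ with $s_0<0$. You then assert this decay ``dominates the short-word contributions.'' It does not: that estimate controls the tail $\{|\omega|>N\}$, while the problematic set is $\{|\omega|\le N,\ S_\omega\psi>T\}$, which is disjoint from the controlled tail. When $\psi$ is unbounded on one-letter cylinders, $\sum_{|\omega|\le N,\,S_\omega\psi>T}\exp S_\omega(\varphi-\beta\psi)$ need not be finite just because the long-word tail decays exponentially; no amount of sharpening $s_0$ touches those terms. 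What would actually be needed is an argument that, for each fixed $n\le N$, the full sum $\sum_{|\omega|=n}\exp S_\omega(\varphi-\beta\psi)$ is finite (so that its $S_\omega\psi>T$ sub-sum is a genuine tail of a convergent series), or else that short ``heavy'' words can be propagated to long words and thereby contradict $F(\beta)<0$; neither is supplied.

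On the two sufficient conditions: your treatment of $\psi\ge c>0$ via the pointwise bound, monotonicity, and the constant-translation rule for $\mathcal{P}_1$ is correct and matches the paper. For the second condition (finitely irreducible, H\"older, countable zero set) you depart from the paper, which simply quotes \citep{MR2439479} for real analyticity of $\beta\mapsto\mathcal{P}_1(\varphi-\beta\psi,\Sigma^*)$ and the derivative formula $-\int\psi\,d\mu_{\varphi-\beta\psi}<0$ (non-atomicity of the Gibbs measure kills $\psi^{-1}(\{0\})$). Your combinatorial alternative of inserting repetitions of a cylinder $[\nu]$ with $\psi\ge\delta$ there is an interesting elementary route, but the crucial estimate $S_{\nu^m}\psi\gtrsim m|\nu|\delta$ does not follow from $\psi\ge\delta$ on $[\nu]$ alone, since $\sigma^k(\tau)$ for $\tau\in[\nu^m]$ generally lies outside $[\nu]$; you would need either $\nu$ a loop with the bound holding on $[\overline{\nu}]$, or a separate bounded-distortion argument for the induced block, before the geometric gap can be extracted. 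As written this part is a sketch with a real gap, whereas the paper's appeal to analyticity sidesteps the issue entirely.
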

\begin{proof}
Since the map $\beta\mapsto\mathcal{P}_{1}\left(\varphi-\beta\psi,\mathcal{C}\right)$
is strictly decreasing on the set $\interior\left\{ \beta:\mathcal{P}_{1}\left(\varphi-\beta\psi,\mathcal{C}\right)<\infty\right\} $,
we conclude that \begin{eqnarray*}
\inf\left\{ \beta\in\R:\mathcal{P}_{1}\left(\varphi-\beta\psi,\mathcal{C}\right)\le0\right\}  & = & \inf\left\{ \beta\in\R:\mathcal{P}_{1}\left(\varphi-\beta\psi,\mathcal{C}\right)<0\right\} \\
 & = & \sup\left\{ \beta\in\R:\mathcal{P}_{1}\left(\varphi-\beta\psi,\mathcal{C}\right)\ge0\right\} .\end{eqnarray*}
Since \begin{eqnarray*}
\inf\left\{ \beta\in\R:\mathcal{P}_{1}\left(\varphi-\beta\psi,\mathcal{C}\right)<0\right\} \\
 &  & \!\!\!\!\!\!\!\!\!\!\!\!\!\!\!\!\!\!\!\!\!\!\!\!\!\!\!\!\!\!\!\!\geq\inf\left\{ \beta\in\R:\limsup_{T\rightarrow\infty}\sum_{{\omega\in\mathcal{C}\atop T<S_{\omega}\psi}}\exp\left(S_{\omega}\varphi-\beta S_{\omega}\psi\right)<\infty\right\} \end{eqnarray*}
the claim follows from Theorem \ref{thm:pressure_as_inf} and Corollary
\ref{cor:pressure_rescaling}. 

For $\psi\ge c>0$ the map $\beta\mapsto\mathcal{P}_{1}\left(\varphi-\beta\psi,\mathcal{C}\right)$
is  strictly decreasing on the set $\interior\left\{ \beta:\mathcal{P}_{1}\left(\varphi-\beta\psi,\mathcal{C}\right)<\infty\right\} $.
If $\varphi$ and $\psi$ are Hölder continuous, $\psi\geq0$ and
the incidence matrix of $\Sigma$ is finitely irreducible, then we
have by \citep{MR2439479} that $\beta\mapsto\mathcal{P}_{1}\left(\varphi-\beta\psi,\Sigma^{*}\right)$
is real analytic on $\interior\left\{ \beta:\mathcal{P}_{1}\left(\varphi-\beta\psi,\Sigma^{*}\right)<\infty\right\} $
and \[
\frac{\partial}{\partial\beta}\mathcal{P}_{1}\left(\varphi-\beta\psi,\Sigma^{*}\right)\big|_{\beta_{0}}=-\int\psi d\mu_{\varphi-\beta_{0}\psi}<0,\]
where $\mu_{\varphi-\beta_{0}\psi}$ denotes the unique invariant
Gibbs measure of $\varphi-\beta_{0}\psi$, which has no atoms.\end{proof}
\begin{rem}
\label{rem:psipressure_finitealphabet} If $\Sigma$ is a subshift
over a finite alphabet, i.e. $\Sigma\subset\Sigma_{n}:=\left\{ 1,\ldots,n\right\} ^{\mathbb{N}}$
for some $n\in\N$, and $\psi>0$ then the map $\beta\mapsto\mathcal{P}_{1}\left(\varphi-\beta\psi,\mathcal{C}\right)$
is a strictly decreasing continuous map on $\R$. Hence we conclude
that $\mathcal{P}_{\psi}\left(\varphi,\mathcal{C}\right)$ is its
unique zero, i.e. $\mathcal{P}_{1}\left(\varphi-\mathcal{P}_{\psi}\left(\varphi,\mathcal{C}\right)\psi,\mathcal{C}\right)=0.$ 
\end{rem}
The next Corollary shows that the exhausting principle implies, that
$\mathcal{P}_{\psi}\left(\varphi,\mathcal{C}\right)$ is given by
the pseudo inverse of the $1$--induced pressure. For ease of notation
let $C_{\Sigma,\sigma}$ denote the set of compact $\sigma$-invariant
subsets of $\Sigma$. 
\begin{cor}
\label{cor:psipressure_asinf-underexhaustingprop}For potential functions
$\varphi,\psi\in C\left(\Sigma,\R\right)$ satisfying $\psi>0$ and
such that the exhausting principle holds for $\left(\psi,\varphi,\mathcal{C}\right)$
we have \[
\mathcal{P}_{\psi}\left(\varphi,\mathcal{C}\right)=\inf\left\{ \beta\in\R:\mathcal{P}_{1}\left(\varphi-\beta\psi,\mathcal{C}\right)\le0\right\} .\]
\end{cor}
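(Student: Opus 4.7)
The plan is to combine the two earlier corollaries in a simple sandwich argument: Corollary \ref{cor:pressure_rescaling} gives the inequality ``$\ge$'' unconditionally, so the work lies in establishing ``$\le$'', and this is where the exhausting principle will be used to reduce the problem to a setting where Corollary \ref{cor:pressureformula_ifstreactlydescreasing} applies.

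First, let $\beta_{0}:=\inf\{\beta\in\R:\mathcal{P}_{1}(\varphi-\beta\psi,\mathcal{C})\le 0\}$, and fix an arbitrary $\beta>\beta_{0}$, so that $\mathcal{P}_{1}(\varphi-\beta\psi,\mathcal{C})\le 0$. Let $(K_{n})_{n\in\N}$ be the sequence of compact $\sigma$-invariant subsets of $\Sigma$ provided by the exhausting principle for $(\psi,\varphi,\mathcal{C})$. Since $\psi>0$ is continuous on $\Sigma$ and each $K_{n}$ is compact, there exists $c_{n}>0$ with $\psi|_{K_{n}}\ge c_{n}$. In particular, the hypotheses of Corollary \ref{cor:pressureformula_ifstreactlydescreasing} are satisfied on the subsystem $(K_{n},\sigma|_{K_{n}})$ with the induced collection $\mathcal{C}\cap K_{n}^{*}$, giving
\[
\mathcal{P}_{\psi,K_{n}}\!\left(\varphi,\mathcal{C}\right)=\inf\!\left\{\beta\in\R:\mathcal{P}_{1,K_{n}}(\varphi-\beta\psi,\mathcal{C})\le 0\right\}.
\]

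Next, the monotonicity property in Proposition \ref{fac:pressure}(\ref{enu:(Monotonicity)}) applied with $K_{1}:=K_{n}\subset K_{2}:=\Sigma$ yields $\mathcal{P}_{1,K_{n}}(\varphi-\beta\psi,\mathcal{C})\le\mathcal{P}_{1}(\varphi-\beta\psi,\mathcal{C})\le 0$, and hence $\mathcal{P}_{\psi,K_{n}}(\varphi,\mathcal{C})\le\beta$ for every $n\in\N$. Passing to the limit along the exhausting sequence we obtain
\[
\mathcal{P}_{\psi}\!\left(\varphi,\mathcal{C}\right)=\lim_{n\to\infty}\mathcal{P}_{\psi,K_{n}}(\varphi,\mathcal{C})\le\beta,
\]
and since $\beta>\beta_{0}$ was arbitrary, $\mathcal{P}_{\psi}(\varphi,\mathcal{C})\le\beta_{0}$. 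Combined with the reverse inequality of Corollary \ref{cor:pressure_rescaling}, this gives the claimed equality.

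The only real subtlety is the lower bound $\psi|_{K_{n}}\ge c_{n}>0$, which is precisely what licenses us to invoke Corollary \ref{cor:pressureformula_ifstreactlydescreasing} on each approximating subsystem even though $\psi$ need not be bounded away from zero on the whole of $\Sigma$; this is where the compactness of the $K_{n}$ is essential. Everything else is monotonicity and the defining property of the exhausting principle.
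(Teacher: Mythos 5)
Your proof is correct and follows essentially the same route as the paper: it obtains ``$\ge$'' from Corollary \ref{cor:pressure_rescaling}, and for ``$\le$'' it uses compactness of each $K_{n}$ to bound $\psi|_{K_{n}}$ away from zero, invokes Corollary \ref{cor:pressureformula_ifstreactlydescreasing} on the subsystem, applies monotonicity, and then passes to the limit along the exhausting sequence. The only cosmetic difference is that you fix an arbitrary $\beta>\beta_{0}$ and argue pointwise, whereas the paper compares the two infima directly; these are equivalent.
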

\begin{proof}
By the first part of Corollary \ref{cor:pressure_rescaling} we only
have to prove {}``$\le$ {}``: For every $K\in C_{\Sigma,\sigma}$
we have that $\psi_{|K}$ is bounded away from zero and hence by the
second part of Corollary \ref{cor:pressure_rescaling} and Proposition
\ref{fac:pressure} (\ref{enu:(Monotonicity)}) we have \begin{eqnarray*}
\mathcal{P}_{\psi,K}\left(\varphi,\mathcal{C}\right) & = & \inf\left\{ \beta\in\R:\mathcal{P}_{1,K}\left(\varphi-\beta\psi,\mathcal{C}\right)\le0\right\} \\
 & \le & \inf\left\{ \beta\in\R:\mathcal{P}_{1}\left(\varphi-\beta\psi,\mathcal{C}\right)\le0\right\} .\end{eqnarray*}
Since the\emph{ }exhausting principle holds for $\left(\psi,\varphi,\mathcal{C}\right)$
we know that there exists a sequence $\left(K_{n}\right)_{n\in\N}\in\left(C_{\Sigma,\sigma}\right)^{\N}$,
such that $\lim_{n\to\infty}\mathcal{P}_{\psi,K_{n}}\left(\varphi,\mathcal{C}\right)=\mathcal{P}_{\psi}\left(\varphi,\mathcal{C}\right).$
Hence we conclude \[
\mathcal{P}_{\psi}\left(\varphi,\mathcal{C}\right)\le\inf\left\{ \beta\in\R:\mathcal{P}_{1}\left(\varphi-\beta\psi,\mathcal{C}\right)\le0\right\} .\]

\end{proof}

\section{Loop spaces\label{sec:Loop-spaces}}

In this section we introduce the $\mathcal{C}$-loop space for collections
$\mathcal{C}\subset\Sigma^{*}$ satisfying certain inducing properties. 

In the first part we motivate the construction and prove basic properties.
In the second part we investigate the question under which assumptions
we have, that two collections $\mathcal{C}'$ and \foreignlanguage{ngerman}{$\mathcal{C}$}
give rise to the same value of the induced pressure. We will show
that this question is closely linked to a certain exhausting principle.
We will introduce dynamical group extensions to apply this new insights
to characterise amenability of the underlying group in terms of the
induced pressure. The third part of this section is devoted to a detailed
comparison of the classical pressure and the Gurevi\v c pressure;
we also obtain a variational principle for the generalised Gurevi\v c
pressure.

\subsection{Construction and basic properties of loop spaces \label{sub:Construction-of-loop-spaces}}

The following definition will be crucial for the construction of loop
spaces. 
\begin{defn}
\label{def:representablebyloops}We say that $\mathcal{C}$ is \emph{closed
under concatenations}, if for any $\omega_{1},\omega_{2}\in\mathcal{C}$
satisfying $\omega_{1}\omega_{2}\in\Sigma^{*}$ we have $\omega_{1}\omega_{2}\in\mathcal{C}$. 

We say that $\mathcal{C}$ has the \emph{refinement property, }if
for $\omega_{1}\omega_{2}\in\mathcal{C}$ and $\omega_{2}\omega_{3}\in\mathcal{C}$
we have $\omega_{2}\in\mathcal{C}$. 

If $\mathcal{C}$ is both closed under concatenations and has the
refinement property then we say that $\mathcal{C}$ is \emph{representable
by loops}. 
\end{defn}
In fact, this representation is given in the following way. For a
family $\mathcal{C}\subset\Sigma^{*}$ which is representable by loops
we introduce the set of \emph{simple $\mathcal{C}$-loops} as \[
\mathcal{C}^{\mathrm{smpl}}:=\left\{ C\in\mathcal{C}:\nexists C_{1},C_{2}\in\mathcal{C}:C=C_{1}C_{2}\right\} .\]

We define the $\mathcal{C}$\emph{-loop space} \[
\widetilde{\Sigma}_{\mathcal{C}}:=\Sigma_{\mathcal{C}^{\mathrm{smpl}}}:=\left\{ \omega\in\left(\mathcal{C}^{\mathrm{smpl}}\right)^{\N}:\,\,\,\forall i\in\N:\:\omega_{i}\omega_{i+1}\in\mathcal{C}\right\} ,\]
which is a subshift of finite type over the alphabet $\mathcal{C}^{\mathrm{smpl}}$.
 Let $\widetilde{\sigma}$ denote the corresponding shift dynamic
on $\widetilde{\Sigma}_{\mathcal{C}}$ and let $\tilde{d}_{\alpha}$
denote the corresponding metric on $\widetilde{\Sigma}_{\mathcal{C}}$. 

Since $\mathcal{C}$ is closed under concatenations there is a canonical
map $\left(\widetilde{\Sigma}_{\mathcal{C}}\right)^{*}\rightarrow\mathcal{C}$
which we denote by $\widetilde{\omega}\mapsto\omega$. By definition
of the simple elements this map is surjective and since $\mathcal{C}$
has the refinement property it is also injective. 

The injection $\iota:\left(\widetilde{\Sigma}_{\mathcal{C}},\widetilde{d}_{\alpha}\right)\hookrightarrow\left(\Sigma,d_{\alpha}\right)$
is Lipschitz continuous. We will also indicate this map by omitting
the tilde, i.e. we simply write $\omega$ for $\iota\left(\widetilde{\omega}\right)$. 

For a function $f:\Sigma\rightarrow\R$ we define its \emph{induced
version} \[
\tilde{f}:\widetilde{\Sigma}_{\mathcal{C}}\rightarrow\R\textrm{ by }\tilde{f}\left(\widetilde{\omega}\right):=S_{\left|\omega_{1}\right|}f\left(\omega\right),\]
where $\widetilde{\omega}:=\left(\widetilde{\omega}_{1},\widetilde{\omega}_{2},\ldots\right)$.
Let us verify that if $f:\Sigma\rightarrow\R$ is $\alpha$-Hölder
continuous then also $\tilde{f}:\widetilde{\Sigma}_{\mathcal{C}}\rightarrow\R$
is $\alpha$-Hölder continuous. This is a consequence of (\ref{eq:boundeddistortion-lemma}),
since for $\tilde{\omega},\tilde{\omega}'\in\widetilde{\Sigma}_{\mathcal{C}}$
with $\tilde{\omega}\wedge\tilde{\omega}'\ge1$ we have \begin{eqnarray*}
\left|\tilde{f}\left(\tilde{\omega}\right)-\tilde{f}\left(\tilde{\omega}'\right)\right| & = & \left|S_{\left|\omega_{1}\right|}f\left(\omega\right)-S_{\left|\omega_{1}\right|}f\left(\omega'\right)\right|\\
 & \le & \frac{V_{\alpha}\left(f\right)}{e^{\alpha}-1}d_{\alpha}\left(\sigma^{\left|\omega_{1}\right|}\left(\omega\right),\sigma^{\left|\omega_{1}\right|}\left(\omega'\right)\right)\\
 & = & \frac{V_{\alpha}\left(f\right)}{e^{\alpha}-1}d_{\alpha}\left(\iota\left(\tilde{\sigma}\left(\tilde{\omega}\right)\right),\iota\left(\tilde{\sigma}\left(\tilde{\omega}'\right)\right)\right)\\
 & \le & \frac{V_{\alpha}\left(f\right)}{e^{\alpha}-1}\tilde{d}_{\alpha}\left(\tilde{\omega},\tilde{\omega}'\right).\end{eqnarray*}
Hence, (\ref{eq:f-hoeldercontinuous-def}) is satisfied with $V_{\alpha}\left(\tilde{f}\right):=e^{\alpha}\frac{V_{\alpha}\left(f\right)}{e^{\alpha}-1}$
and consequently $\tilde{f}$ is $\alpha$-Hölder continuous. It is
important to note that Hölder continuity of $f$ is a sufficient,
but not a necessary condition for the induced version $\tilde{f}$
to be Hölder continuous.

The next Theorem shows that the $\psi$--induced pressure is invariant
under inducing. Although straightforward to prove, it is of structural
importance for studying loop spaces.
\begin{thm}
[Invariance under Inducing]\label{thm:invariance under inducing}Let
$\varphi,\psi:\Sigma\rightarrow\R$ be Hölder continuous satisfying
$\psi>0$ and let $\mathcal{C}$ be\emph{ }representable by loops\emph{.}
Then for $\mathcal{C}'\subset\mathcal{C}$ and the canonically associated
set $\widetilde{\mathcal{C}}'\subset\left(\widetilde{\Sigma}_{\mathcal{C}}\right)^{*}$
we have \[
\mathcal{P}_{\psi}\left(\varphi,\mathcal{C}'\right)=\mathcal{P}_{\tilde{\psi}}\left(\tilde{\varphi},\widetilde{\mathcal{C}}'\right).\]
\end{thm}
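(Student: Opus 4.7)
The strategy is to reduce the identity to a statement about the summands in the critical exponent characterization of Theorem \ref{thm:pressure_as_inf}, by exploiting the canonical bijection between $\tilde{\mathcal{C}}'$ and $\mathcal{C}'$ together with the bounded distortion property.

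First, I would make precise the bijection $(\widetilde{\Sigma}_{\mathcal{C}})^{*}\to\mathcal{C}$, $\tilde{\omega}\mapsto\omega$, which the paper already describes: surjectivity onto $\mathcal{C}$ uses that $\mathcal{C}$ is closed under concatenations, injectivity uses the refinement property (together with minimality of simple loops). Restricting this bijection yields a bijection $\widetilde{\mathcal{C}}'\leftrightarrow\mathcal{C}'$.

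Second, I would show that for every $\tilde{\omega}\in(\widetilde{\Sigma}_{\mathcal{C}})^{*}$ corresponding to $\omega\in\mathcal{C}$, and every H\"older function $f$ on $\Sigma$,
\[
\bigl|\tilde{S}_{\tilde{\omega}}\tilde{f}-S_{\omega}f\bigr|\le C_{f},
\]
with $C_{f}$ the bounded distortion constant of $f$ from \eqref{eq:boundeddistortionproperty}. The key computation is that for any $\tilde{\tau}\in[\tilde{\omega}]\subset\widetilde{\Sigma}_{\mathcal{C}}$, a telescoping using the definition of $\tilde{f}$ gives $\sum_{k=0}^{|\tilde{\omega}|-1}\tilde{f}(\widetilde{\sigma}^{k}\tilde{\tau})=S_{|\omega|}f(\iota(\tilde{\tau}))$ in the pointwise Birkhoff sense. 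Taking the supremum over $\tilde{\tau}\in[\tilde{\omega}]$ shows $\tilde{S}_{\tilde{\omega}}\tilde{f}\le S_{\omega}f$, while picking any fixed $\tilde{\tau}\in[\tilde{\omega}]$ (which is nonempty) and combining with \eqref{eq:boundeddistortionproperty} applied on $[\omega]$ yields $S_{\omega}f\le\tilde{S}_{\tilde{\omega}}\tilde{f}+C_{f}$. Note that this uses H\"older continuity of both $\varphi$ and $\psi$ as hypothesised.

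Third, I would apply Theorem \ref{thm:pressure_as_inf} to both sides. Writing $\Delta_{\tilde{\omega}}(\beta):=\tilde{S}_{\tilde{\omega}}\tilde{\varphi}-\beta\tilde{S}_{\tilde{\omega}}\tilde{\psi}$ and the analogous $\Delta_{\omega}(\beta)$, step two yields
\[
\bigl|\Delta_{\tilde{\omega}}(\beta)-\Delta_{\omega}(\beta)\bigr|\le C_{\varphi}+|\beta|C_{\psi},
\]
so that the two series differ by a multiplicative constant depending only on $\beta$. Since the condition $T<S_{\omega}\psi$ on the $\mathcal{C}'$-side corresponds, via the bijection, to $T-C_{\psi}<\tilde{S}_{\tilde{\omega}}\tilde{\psi}$ on the $\widetilde{\mathcal{C}}'$-side, and shifting $T$ by a constant does not affect the $\limsup_{T\to\infty}$ in Theorem \ref{thm:pressure_as_inf}, the two critical exponents coincide. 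This gives $\mathcal{P}_{\psi}(\varphi,\mathcal{C}')=\mathcal{P}_{\tilde{\psi}}(\tilde{\varphi},\widetilde{\mathcal{C}}')$.

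The main (mild) obstacle is purely bookkeeping in the second step: making sure that the sup defining $\tilde{S}_{\tilde{\omega}}\tilde{f}$ (taken only over the loop-space cylinder $[\tilde{\omega}]\subset\widetilde{\Sigma}_{\mathcal{C}}$, hence over a potentially strictly smaller set than $[\omega]\subset\Sigma$) is comparable to $S_{\omega}f$ up to an absolute constant, which is exactly what \eqref{eq:boundeddistortionproperty} buys us. Everything else is a formal transfer through Theorem \ref{thm:pressure_as_inf}.
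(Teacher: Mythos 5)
Your proposal is correct and follows essentially the same route as the paper, which disposes of the theorem in a single sentence: it invokes exactly the one-to-one correspondence between $\mathcal{C}'$ and $\widetilde{\mathcal{C}}'$ plus the bounded distortion property for $\varphi$ and $\psi$ (with a pointer to Remark \ref{rem:def_forhoelderpotentials}), then leaves the transfer through Theorem \ref{thm:pressure_as_inf} implicit. You have simply spelled out these three steps — the bijection, the $C_f$-comparability $\bigl|\tilde{S}_{\tilde\omega}\tilde f - S_\omega f\bigr|\le C_f$ via the telescoping identity $\sum_{k=0}^{|\tilde\omega|-1}\tilde f(\tilde\sigma^k\tilde\tau)=S_{|\omega|}f(\iota(\tilde\tau))$, and the resulting equality of critical exponents after shifting $T$ by $C_\psi$ — so this is a fleshed-out version of the paper's argument rather than a different one.
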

\begin{proof}
This is a consequence of the one-to-one correspondence between elements
of $\mathcal{C}'$ and $\widetilde{\mathcal{C}}'$ combined with the
bounded distortion property for $\varphi$ and $\psi$ (see Remark
\ref{rem:def_forhoelderpotentials}).
\end{proof}
Let us discuss the basic example, which motivates the constructions.
\begin{example}
[Gurevi\v c pressure] For an irreducible subshift of finite type
$\Sigma$ we consider the set $\mathcal{C}:=\Sigma_{a}^{\mathrm{per}}$
for a fixed element $a\in I$. Clearly, $\Sigma_{a}^{\mathrm{per}}$
is closed under concatenations and has the refinement property. The
$\Sigma_{a}^{\mathrm{per}}$--loop space is the full shift over the
alphabet $\Sigma_{a}^{\mathrm{smpl}}$, which we denote by $\widetilde{\Sigma}_{a}$.
Let $\varphi:\Sigma\rightarrow\R$ be a Hölder continuous potential.
As a first application of the invariance under inducing we express
the Gurevi\v c pressure via the classical pressure on the corresponding
loop space. By Theorem \ref{thm:invariance under inducing} and Corollary
\ref{cor:pressureformula_ifstreactlydescreasing} we have \begin{eqnarray*}
\mathcal{P}_{1}\left(\varphi,\Sigma_{a}^{\mathrm{per}}\right) & = & \mathcal{P}_{\tilde{1}}\left(\tilde{\varphi},\widetilde{\Sigma}_{a}^{*}\right)\\
 & = & \inf\left\{ \beta\in\R:\mathcal{P}_{1}\left(\tilde{\varphi}-\beta\tilde{1},\widetilde{\Sigma}_{a}^{*}\right)\le0\right\} \\
 & = & \sup\left\{ \beta\in\R:\mathcal{P}_{1}\left(\tilde{\varphi}-\beta\tilde{1},\widetilde{\Sigma}_{a}^{*}\right)\ge0\right\} .\end{eqnarray*}

Next, we consider $\mathcal{P}_{1}\left(\tilde{\varphi},\widetilde{\Sigma}_{a}^{*}\right)$,
which is equal to the classical pressure of $\tilde{\varphi}$ with
respect to the induced dynamical system $\left(\iota\left(\widetilde{\Sigma}_{a}\right),\sigma^{*}\right)$
with $\sigma^{*}=\sigma^{T}$, where $T\left(\omega\right):=\inf\left\{ n\ge1:\sigma^{n}\left(\omega\right)\in\left[a\right]\right\} $.
Since for $\psi:=\1_{\left[a\right]}$ on $\Sigma$ we have $\tilde{\psi}=1$
on $\widetilde{\Sigma}_{a}$ the invariance under inducing implies
\[
\mathcal{P}_{1}\left(\tilde{\varphi},\widetilde{\Sigma}_{a}^{*}\right)=\mathcal{P}_{\1_{\left[a\right]}}\left(\varphi,\Sigma_{a}^{\mathrm{per}}\right),\]
which motivates the term  $\psi$--induced pressure.
\end{example}
In the following example we consider the geometric potential of the
harmonic $\alpha$-Farey map introduced in \citep{MundayKessStrat10}.
We make use of the invariance under inducing from Theorem \ref{thm:invariance under inducing}
to determine the pressure. 
\begin{example}
[$\alpha$-Farey Map] \label{exa:lueroth-1} As an example let us
study the harmonic $\alpha$-Farey map $F$ defined in \citep{MundayKessStrat10},
since for this map all the important quantities can be calculated
explicitly. The map $F:\left[0,1\right]\rightarrow\left[0,1\right]$
is uniquely determined by the property of being linear on $I_{n}:=\left(1/\left(n+1\right),1/n\right],\: n\in\N$,
 such that \[
F\left(0\right)=0,\quad F\left(\frac{1}{n+1}\right)=\frac{1}{n},n\ge1,\quad F\left(1\right)=0.\]
 One immediately verifies that the geometric potential $\psi:=\log|F'|$
is positive but not bounded away from $0$. Furthermore, the family
of sets $\left(I_{n}\right)$ defines a Markov partition and with
respect to this partition $F$ is conjugated to the renewal shift
with state space $\N$ and root $1$, which we denote by $\Sigma$.
Notice, that $\Sigma$ is mixing but not finitely irreducible. 

We prove that $\mathcal{P}_{1}\left(-\beta\psi,\Sigma_{1}^{\mathrm{per}}\right)$
is strictly greater than zero for $\beta<1$ and equal to zero for
$\beta\ge1$. Furthermore, $\mathcal{P}_{1}\left(0,\Sigma_{1}^{\mathrm{per}}\right)=\log2$.
We also show that $P_{\psi}\left(0,\Sigma_{1}^{\mathrm{per}}\right)=1$,
which implies that $P_{\psi}\left(0,\Sigma_{1}^{\mathrm{per}}\right)$
is given by the pseudo inverse of the $1$--induced pressure. Furthermore,
$\sup\left\{ \beta\in\R:\mathcal{P}_{1}\left(-\beta\psi,\Sigma_{1}^{\mathrm{per}}\right)\ge0\right\} =\infty$. 

In order to determine \foreignlanguage{ngerman}{$\mathcal{P}_{1}\left(-\beta\psi,\Sigma_{1}^{\mathrm{per}}\right)$
for $\beta\in\R$ we consider the $\Sigma_{1}^{\mathrm{per}}$-loop
space denoted by $\widetilde{\Sigma}_{1}$. We have $\widetilde{\Sigma}_{1}=\left(\Sigma_{1}^{\mathrm{smpl}}\right)^{\N}$
and we abbreviate }$\left(1,k,k-1,\dots,2\right)\in\Sigma_{1}^{\mathrm{smpl}}$
by $\tilde{k}\in\N$. Observe that $\psi_{|\left[k\right]}=\log\left(\frac{k+1}{k-1}\right)$
for $k\ge2$ and $\psi_{|\left[1\right]}=\log2$ and hence, $\tilde{\psi}_{|\left[\tilde{k}\right]}=\log\left(k\left(k+1\right)\right)$
for $k\in\N$.

By the invariance under inducing (Theorem \ref{thm:invariance under inducing})
we have for $\beta\in\R$ \[
\mathcal{P}_{1}\left(-\beta\psi,\Sigma_{1}^{\mathrm{per}}\right)=\mathcal{P}_{\tilde{1}}\left(-\beta\tilde{\psi},\widetilde{\Sigma}_{1}^{*}\right)=\inf\left\{ t\in\R:\mathcal{P}_{1}\left(-\beta\tilde{\psi}-t\tilde{1},\widetilde{\Sigma}_{1}^{*}\right)\le0\right\} ,\]
 where $\tilde{1}_{|\left[\tilde{k}\right]}=k$ for $k\in\N$. We
have \[
\mathcal{P}_{1}\left(-\beta\tilde{\psi}-t\tilde{1},\widetilde{\Sigma}_{1}^{*}\right)=\log\sum_{k\in\N}\left(k\left(k+1\right)\right)^{-\beta}e^{-tk}.\]
Clearly, for every $\beta\le1$ there exists a unique $t\left(\beta\right)$
such that \[
\mathcal{P}_{1}\left(-\beta\tilde{\psi}-t\left(\beta\right)\tilde{1},\widetilde{\Sigma}_{1}^{*}\right)=0.\]
In particular, we have $t\left(0\right)=\log2$, $t\left(1\right)=0$
and $t\left(\beta\right)>0$ for $\beta<1$. For $\beta>1$ we have
that $\mathcal{P}_{1}\left(-\beta\tilde{\psi}-t\tilde{1},\widetilde{\Sigma}_{1}^{*}\right)$
is strictly less than zero for $t=0$ and is equal to infinity for
$t<0$, hence $\inf\left\{ t\in\R:\mathcal{P}_{1}\left(-\beta\tilde{\psi}-t\tilde{1},\widetilde{\Sigma}_{1}^{*}\right)\le0\right\} =0$
for every $\beta>1$. Furthermore, we have \[
P_{\psi}\left(0,\Sigma_{1}^{\mathrm{per}}\right)=P_{\tilde{\psi}}\left(0,\widetilde{\Sigma}_{1}^{*}\right)=1,\]
since by the above calculation $\mathcal{P}_{1}\left(-\tilde{\psi},\widetilde{\Sigma}_{1}^{*}\right)=0$. 
\end{example}

\subsection{Exhausting principles and group extensions}

In order to relate the induced pressure with respect to different
collections $\mathcal{C}\subset\Sigma^{*}$ it will important to introduce
mixing properties also with respect to such collections. 
\begin{defn}
\label{def:Finitely_irreducible_C} We say that $\mathcal{C}$ is
\emph{irreducible}, if for $C_{1},C_{2}\in\mathcal{C}$ there exists
$D\in\Sigma^{*}\cup\left\{ \emptyset\right\} $ such that $C_{1}DC_{2}\in\mathcal{C}$.
We say that $\mathcal{C}$ is \emph{finitely irreducible}, if there
exists a finite set $\Lambda\subset\Sigma^{*}$, such that for $C_{1},C_{2}\in\mathcal{C}$
there exists $D\in\Lambda\cup\left\{ \emptyset\right\} $ satisfying
$C_{1}DC_{2}\in\mathcal{C}$. \end{defn}
\begin{rem}
For $\mathcal{C}=\Sigma^{*}$ the definition coincides with the usual
definition of (finite) irreducibility of the incidence matrix of $\Sigma$.
\end{rem}
We also need the following notions of embeddability.
\begin{defn}
Let $\mathcal{C}',\mathcal{C}\subset\Sigma^{*}$. We say that\emph{
$\mathcal{C}$ is finitely embeddable into $\mathcal{C}'$}, if there
exists a finite set $\Lambda\subset\Sigma^{*}$ such that \[
\forall\omega\in\mathcal{C}\,\exists\tau_{1},\tau_{2}\in\Lambda:\,\tau_{1}\omega\tau_{2}\in\mathcal{C}'.\]
We say that $\mathcal{C}$ is\emph{ compactly finitely embeddable
into $\mathcal{C}'$}, if $\mathcal{C}\cap K^{*}$ is finitely embeddable
into $\mathcal{C}'$ for all compact $\sigma$-invariant subsets $K\subset\Sigma$. \end{defn}
\begin{lem}
\label{lem:pressure-finitelyembeddable}Let $\mathcal{C}',\mathcal{C}\subset\Sigma^{*}$
and $\varphi,\psi:\Sigma\rightarrow\R$ be Hölder continuous, $\psi\ge0$.
If $\mathcal{C}$ is finitely embeddable into $\mathcal{C}'$ then
\[
\mathcal{P}_{\psi}\left(\varphi,\mathcal{C}\right)\le\mathcal{P}_{\psi}\left(\varphi,\mathcal{C}'\right).\]
\end{lem}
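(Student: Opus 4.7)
The plan is to exploit the finite embedding of $\mathcal{C}$ into $\mathcal{C}'$ to transport partition sums over $\mathcal{C}$ into partition sums over $\mathcal{C}'$, incurring only a uniformly bounded distortion thanks to finiteness of $\Lambda$ and the bounded distortion property for H\"older continuous potentials. Concretely, for every $\omega \in \mathcal{C}$ fix a pair $\bigl(\tau_{1}(\omega),\tau_{2}(\omega)\bigr) \in \Lambda \times \Lambda$ with $\tau_{1}(\omega)\,\omega\,\tau_{2}(\omega) \in \mathcal{C}'$ and partition $\mathcal{C}$ into at most $|\Lambda|^{2}$ subclasses $\mathcal{C}_{(\sigma_{1},\sigma_{2})}$ according to this assignment. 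On each subclass the map $\omega \mapsto \sigma_{1}\omega\sigma_{2}$ is clearly injective into $\mathcal{C}'$.

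Next I would establish uniform Birkhoff-sum control. Since $\Lambda$ is finite and $\varphi,\psi$ are H\"older continuous, the bounded distortion property \eqref{eq:boundeddistortionproperty} supplies a constant $K = K(\Lambda,\varphi,\psi) > 0$ such that, for all $\omega \in \mathcal{C}$ and all $\tau_{1},\tau_{2} \in \Lambda$ with $\tau_{1}\omega\tau_{2} \in \Sigma^{*}$,
\[
\bigl|S_{\tau_{1}\omega\tau_{2}}\varphi - S_{\omega}\varphi\bigr| \le K
\qquad\text{and}\qquad
\bigl|S_{\tau_{1}\omega\tau_{2}}\psi - S_{\omega}\psi\bigr| \le K.
\]
The point is that for any fixed $\tau \in \Lambda$ the sup $S_{\tau}\varphi$ (respectively $S_{\tau}\psi$) is finite by bounded distortion, and taking the maximum over the finite set $\Lambda$ yields the required constant; the remaining error in splitting the Birkhoff sum over $\tau_{1}\omega\tau_{2}$ is absorbed into another application of bounded distortion.

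Now I would invoke the freedom in the window size: by Theorem~\ref{thm:pressure_as_inf} the value $\mathcal{P}_{\psi}\left(\varphi,\mathcal{C}'\right)$ does not depend on the choice $\eta' > 0$ used in its definition, so choose $\eta' := \eta + 2K$. For $\omega \in \mathcal{C}$ with $T - \eta < S_{\omega}\psi \le T$ the image satisfies $(T+K) - \eta' < S_{\tau_{1}\omega\tau_{2}}\psi \le T + K$, hence
\[
\sum_{\substack{\omega \in \mathcal{C}\\ T-\eta < S_{\omega}\psi \le T}} \exp S_{\omega}\varphi
\;\le\; |\Lambda|^{2}\, e^{K}\!\!\!\!\! \sum_{\substack{\omega' \in \mathcal{C}'\\ (T+K)-\eta' < S_{\omega'}\psi \le T+K}}\!\!\!\!\! \exp S_{\omega'}\varphi.
\]
Taking logarithms, dividing by $T$, and passing to $\limsup_{T\to\infty}$ (noting that $(T+K)/T \to 1$) yields $\mathcal{P}_{\psi}(\varphi,\mathcal{C}) \le \mathcal{P}_{\psi}(\varphi,\mathcal{C}')$.

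The main technical obstacle is the uniform comparison of Birkhoff sums: because $\varphi$ and $\psi$ are in general unbounded on $\Sigma$, one cannot simply bound $|\varphi|$ and $|\psi|$ on the cylinders $[\tau_{i}]$, and must instead rely on the combination of H\"older continuity, the bounded distortion property, and the finiteness of $\Lambda$ to produce a single constant $K$ valid for all $\omega$ and all $(\tau_{1},\tau_{2}) \in \Lambda^{2}$. Once that is in place the rest is a routine partition-function estimate.
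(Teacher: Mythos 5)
Your argument is correct, and the key idea matches the paper's: map each $\omega\in\mathcal{C}$ to some $\tau_{1}(\omega)\omega\tau_{2}(\omega)\in\mathcal{C}'$ with $\tau_{i}\in\Lambda$, control the multiplicity of this map (a constant depending only on $\Lambda$), and control the change in Birkhoff sums. The execution differs in two ways. First, the paper routes the comparison through the critical-exponent formula of Theorem~\ref{thm:pressure_as_inf}, comparing the tail series $\sum_{\omega\in\mathcal{C},\,T<S_{\omega}\psi}\exp S_{\omega}(\varphi-\beta\psi)$ directly with the analogous series over $\mathcal{C}'$; this makes the transfer of the constraint $T<S_{\omega}\psi$ to the image automatic (since $\psi\ge0$), whereas you must shift and widen the window to $((T+K)-\eta',T+K]$ and then invoke Theorem~\ref{thm:pressure_as_inf} for $\eta$-independence. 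Second, the paper switches to the $\inf$-version of $S_{\omega}$ (permitted by Remark~\ref{rem:def_forhoelderpotentials}), for which $S_{\tau_{1}\omega\tau_{2}}(\varphi-\beta\psi)\ge S_{\tau_{1}}(\varphi-\beta\psi)+S_{\omega}(\varphi-\beta\psi)+S_{\tau_{2}}(\varphi-\beta\psi)$ with no error term; a single quantity $m:=\min_{\tau\in\Lambda}S_{\tau}(\varphi-\beta\psi)$ then yields the one-sided bound, avoiding the symmetric distortion constant $K$ you use. Your version keeps the $\sup$-definition and pays for it with an extra bounded-distortion splicing step, which you sketch but do not fully spell out: the claim $|S_{\tau_{1}\omega\tau_{2}}f-S_{\omega}f|\le K$ indeed holds, by splitting the Birkhoff sum over $\tau_{1}\omega\tau_{2}$ into pieces over $\tau_{1}$, $\omega$, $\tau_{2}$, bounding each $|S_{\tau_{i}}f|$ using the finiteness of $\Lambda$, and absorbing cylinder-restriction errors via the bounded distortion property~(\ref{eq:boundeddistortionproperty}). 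Both routes are valid; the paper's is marginally shorter since it sidesteps the window bookkeeping entirely.
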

\begin{proof}
For the proof it is sufficient to find a constant $C>0$, such that
for all $T>0$ large and $\beta\in\R$ we have \[
\sum_{{\omega\in\mathcal{C}\atop T<S_{\omega}\psi}}\exp S_{\omega}\left(\varphi-\beta\psi\right)\le C\sum_{{\omega\in\mathcal{C}'\atop T<S_{\omega}\psi}}\exp S_{\omega}\left(\varphi-\beta\psi\right).\]
Then the claim would follow by Theorem \ref{thm:pressure_as_inf}. 

In fact, let $\Lambda\subset\Sigma^{*}$ be the finite set witnessing
the finitely embeddability condition. Let $k$ denote the maximal
word length of the elements in $\Lambda$. Since $\varphi$ and $\psi$
are Hölder continuous we may choose $S_{\omega}\left(\varphi-\beta\psi\right):=\inf_{\tau\in\left[\omega\right]}S_{\left|\omega\right|}\left(\varphi-\beta\psi\right)\left(\tau\right)$
in the definition of the induced pressure. With $m:=\min\limits _{\tau\in\Lambda}S_{\tau}\left(\varphi-\beta\psi\right)$
we have\begin{eqnarray*}
\e^{2m}\sum_{{\omega\in\mathcal{C}\atop T<S_{\omega}\psi}}\exp S_{\omega}\left(\varphi-\beta\psi\right) & \le & \sum_{{\omega\in\mathcal{C}\atop T<S_{\omega}\psi}}\exp\left(S_{\tau_{1}\left(\omega\right)\omega\tau_{2}\left(\omega\right)}\left(\varphi-\beta\psi\right)\right).\end{eqnarray*}
Using this estimate and the fact that the map $\omega\mapsto\tau_{1}\left(\omega\right)\omega\tau_{2}\left(\omega\right)$
is at most $\left(2k\right)$-to-$1$ we finally conclude\[
\sum_{{\omega\in\mathcal{C}\atop T<S_{\omega}\psi}}\exp S_{\omega}\left(\varphi-\beta\psi\right)\le2k\e^{-2m}\sum_{{\omega\in\mathcal{C}'\atop T<S_{\omega}\psi}}\exp\left(S_{\tau_{1}\left(\omega\right)\omega\tau_{2}\left(\omega\right)}\left(\varphi-\beta\psi\right)\right).\]
\end{proof}
\begin{rem}
If in Lemma \ref{lem:pressure-finitelyembeddable} we have additionally
$\mathcal{C}'\subset\mathcal{C}$ then \[
\mathcal{P}_{\psi}\left(\varphi,\mathcal{C}\right)=\mathcal{P}_{\psi}\left(\varphi,\mathcal{C}'\right).\]

\end{rem}
Subsequently, the following notation will be useful. For an arbitrary
Markov shift $\Sigma$ we let $\pi_{j}:\Sigma\rightarrow I$ denote
the \emph{projection on the $j$-th coordinate}, i.e. $\pi_{j}\left(\omega\right)=\omega_{j}$. 

The next Lemma shows that the exhausting principle for the induced
version on the associated loop space implies the exhausting principle
for the original system. 
\begin{lem}
\label{lem:Cloopexhausting-Cexhausting}Let $\varphi,\psi:\Sigma\rightarrow\R$
be Hölder continuous, $\psi\ge0$. Assume that $\mathcal{C}$ is representable
by loops. If $\left(\tilde{\psi},\tilde{\varphi},\widetilde{\mathcal{C}}\right)$
satisfies the exhausting principle then also $\left(\psi,\varphi,\mathcal{C}\right)$
satisfies the exhausting principle. \end{lem}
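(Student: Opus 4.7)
The plan is to transfer an exhausting sequence from $\widetilde{\Sigma}_{\mathcal{C}}$ back to $\Sigma$ and then to sandwich the induced pressures using the invariance under inducing from Theorem \ref{thm:invariance under inducing} together with monotonicity. Concretely, let $(\widetilde{K}_n)_{n\in\N}$ be a sequence of compact $\tilde\sigma$-invariant subsets of $\widetilde{\Sigma}_{\mathcal{C}}$ witnessing the exhausting principle for $(\tilde\psi,\tilde\varphi,\widetilde{\mathcal{C}})$. I would set $K_n \subset \Sigma$ to be the closure of the $\sigma$-orbit of $\iota(\widetilde{K}_n)$. Since $\widetilde{\Sigma}_{\mathcal{C}}$ is a shift over the countable alphabet $\mathcal{C}^{\mathrm{smpl}}$, compactness of $\widetilde{K}_n$ forces only finitely many simple loops to appear in points of $\widetilde{K}_n$, and each such loop is a finite word in $\Sigma^{*}$. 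Hence only a finite subalphabet $B_n\subset I$ is used in $\iota(\widetilde{K}_n)$ and all its shifts, so $K_n$ is contained in the full shift over $B_n$ and therefore compact, as well as $\sigma$-invariant by construction.

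The heart of the argument is the chain
\[
\mathcal{P}_{\tilde\psi,\widetilde{K}_n}(\tilde\varphi,\widetilde{\mathcal{C}}) \;\le\; \mathcal{P}_{\psi,K_n}(\varphi,\mathcal{C}) \;\le\; \mathcal{P}_{\psi}(\varphi,\mathcal{C}) \;=\; \mathcal{P}_{\tilde\psi}(\tilde\varphi,\widetilde{\mathcal{C}}).
\]
The rightmost equality is Theorem \ref{thm:invariance under inducing} applied with $\mathcal{C}' = \mathcal{C}$, and the middle inequality is immediate from Proposition \ref{fac:pressure}~(\ref{enu:(Monotonicity)}). For the left inequality I would exploit $K_n \supset \iota(\widetilde{K}_n)$: given any $\tilde\omega \in \widetilde{\mathcal{C}} \cap (\widetilde{K}_n)^{*}$ one can pick $\tilde\tau \in [\tilde\omega] \cap \widetilde{K}_n$, whence $\iota(\tilde\tau) \in [\iota(\tilde\omega)] \cap K_n$, so $\iota(\tilde\omega) \in \mathcal{C} \cap K_n^{*}$. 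The canonical bijection between $\widetilde{\mathcal{C}}$-admissible words in $(\widetilde{\Sigma}_{\mathcal{C}})^{*}$ and elements of $\mathcal{C}$ thus embeds $\widetilde{\mathcal{C}} \cap (\widetilde{K}_n)^{*}$ into the set canonically associated to $\mathcal{C} \cap K_n^{*}$. Applying Theorem \ref{thm:invariance under inducing} to $\mathcal{C}' := \mathcal{C} \cap K_n^{*}$ together with Proposition \ref{fac:pressure}~(\ref{enu:(Monotonicity)}) then yields the left inequality; throughout, I would silently use Remark \ref{rem:def_forhoelderpotentials} and the bounded distortion property to absorb the mismatch between $S_{\tilde\omega}\tilde\varphi$, $S_{\tilde\omega}\tilde\psi$ and the corresponding Birkhoff sums for $\iota(\tilde\omega) \in \Sigma^{*}$ into uniform multiplicative constants, which do not affect exponential growth rates.

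Given the sandwich, the hypothesis $\mathcal{P}_{\tilde\psi,\widetilde{K}_n}(\tilde\varphi,\widetilde{\mathcal{C}}) \to \mathcal{P}_{\tilde\psi}(\tilde\varphi,\widetilde{\mathcal{C}})$ forces $\mathcal{P}_{\psi,K_n}(\varphi,\mathcal{C}) \to \mathcal{P}_{\psi}(\varphi,\mathcal{C})$ by squeeze, which is precisely the exhausting principle for $(\psi,\varphi,\mathcal{C})$. The main obstacle I anticipate is the compactness of $K_n$: taking an orbit closure inside a shift over a countable alphabet need not yield a compact set in general, and the argument crucially relies on the twin facts that each simple loop of $\mathcal{C}^{\mathrm{smpl}}$ is a \emph{finite} word in $I$ and that a compact subset of $\widetilde{\Sigma}_{\mathcal{C}}$ uses only \emph{finitely many} such loops, which together cut the alphabet of $K_n$ down to a finite set.
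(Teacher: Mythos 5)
Your proof is correct and follows essentially the same route as the paper's: both hinge on the observation that a compact $\tilde\sigma$-invariant $\widetilde{K}$ involves only finitely many simple loops of $\mathcal{C}^{\mathrm{smpl}}$, each of which is a finite word in $I$, so the corresponding $K\subset\Sigma$ can be taken inside a full shift over a finite subalphabet, and both then combine invariance under inducing (Theorem \ref{thm:invariance under inducing}), Remark \ref{rem:def_forhoelderpotentials}, and monotonicity to close the chain of inequalities. The only cosmetic difference is that you phrase it as a squeeze along a sequence $(\widetilde K_n)$ while the paper works with a single $\epsilon$ and one $\widetilde K$ at a time.
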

\begin{proof}
For arbitrary $\epsilon>0$ we find a compact $\widetilde{\sigma}$-invariant
subset $\widetilde{K}\subset\widetilde{\Sigma}_{\mathcal{C}}$, such
that \[
\mathcal{P}_{\tilde{\psi}}\left(\tilde{\varphi},\widetilde{\mathcal{C}}\right)-\epsilon\le\mathcal{P}_{\tilde{\psi},\widetilde{K}}\left(\tilde{\varphi},\widetilde{\mathcal{C}}\right)=\mathcal{P}_{\tilde{\psi}}\left(\tilde{\varphi},\widetilde{\mathcal{C}}\cap\widetilde{K}^{*}\right),\]
where the last equality follows by Remark \ref{rem:def_forhoelderpotentials}. 

Since $\widetilde{K}$ is compact we have that $\tilde{I}_{K}:=\pi_{1}(\widetilde{K})\subset\mathcal{C}^{\mathrm{smpl}}$
is compact and hence finite. The $\widetilde{\sigma}$-invariance
of $\widetilde{K}$ implies that we have $\pi_{n}(\widetilde{K})=\pi_{1}(\widetilde{\sigma}^{n}\widetilde{K})=\pi_{1}(\widetilde{K})\subset\tilde{I}_{K}$
and therefore $\widetilde{K}\subset\left(\tilde{I}_{K}\right)^{\N}$.
By Proposition \ref{fac:pressure} (\ref{enu:(Monotonicity)}) and
Theorem \ref{thm:invariance under inducing} we have \[
\mathcal{P}_{\tilde{\psi}}\left(\tilde{\varphi},\widetilde{\mathcal{C}}\cap\widetilde{K}^{*}\right)\le\mathcal{P}_{\tilde{\psi}}\left(\tilde{\varphi},\widetilde{\mathcal{C}}\cap\tilde{I}_{K}^{*}\right)=\mathcal{P}_{\psi}\left(\varphi,\mathcal{C}\cap\left\{ \omega\in\Sigma^{*}:\tilde{\omega}_{i}\in\tilde{I}_{K}\right\} \right).\]
Next, we define $I_{K}$ as the (finite) set of all elements in $I$,
which are needed to represent the elements of $\tilde{I}_{K}$. Then
$K:=\left(I_{K}\right)^{\N}\cap\Sigma$ is compact and $\sigma$-invariant.
By Proposition \ref{fac:pressure} (\ref{enu:(Monotonicity)}) and
Remark \ref{rem:def_forhoelderpotentials} \[
\mathcal{P}_{\psi}\left(\varphi,\mathcal{C}\cap\left\{ \omega\in\Sigma^{*}:\tilde{\omega}_{i}\in\tilde{I}_{K}\right\} \right)\le\mathcal{P}_{\psi}\left(\varphi,\mathcal{C}\cap K^{*}\right)=\mathcal{P}_{\psi,K}\left(\varphi,\mathcal{C}\right).\]
Finally combining Theorem \ref{thm:invariance under inducing} with
the above estimates we conclude that $\mathcal{P}_{\psi}\left(\varphi,\mathcal{C}\right)-\epsilon=\mathcal{P}_{\tilde{\psi}}\left(\tilde{\varphi},\widetilde{\mathcal{C}}\right)-\epsilon\le\mathcal{P}_{\psi,K}\left(\varphi,\mathcal{C}\right)$. \end{proof}
\begin{rem}
We would like to point out that the conditions of Lemma \ref{lem:Cloopexhausting-Cexhausting}
in particular imply that by Corollary \ref{cor:psipressure_asinf-underexhaustingprop}
we have \[
\mathcal{P}_{\psi}\left(\varphi,\mathcal{C}\right)=\inf\left\{ \beta\in\R:\mathcal{P}_{1}\left(\varphi-\beta\psi,\mathcal{C}\right)\le0\right\} ,\]
whereas in general $\mathcal{P}_{\psi}\left(\varphi,\mathcal{C}\right)$
is not equal to $\sup\left\{ \beta\in\R:\mathcal{P}_{1}\left(\varphi-\beta\psi,\mathcal{C}\right)\ge0\right\} $
as seen in Example \ref{exa:lueroth-1}.
\end{rem}
Next Theorem considers subsystems $\mathcal{C}'\subset\mathcal{C}$
with $\mathcal{C}$ compactly finitely embeddable into $\mathcal{C}'$.
Under the condition that $\mathcal{C}'$ is finitely irreducible we
prove that $\mathcal{P}_{\psi}\left(\varphi,\mathcal{C}\right)=\mathcal{P}_{\psi}\left(\varphi,\mathcal{C}'\right)$
is equivalent to the exhausting principle for $\left(\psi,\varphi,\mathcal{C}\right)$. 
\begin{thm}
\label{thm:exhaustable-part}Let $\varphi,\psi:\Sigma\to\mathbb{R}$
 be Hölder continuous with $\psi>0$. Let $\mathcal{C}'\subset\mathcal{C}\subset\Sigma^{*}$
and suppose that $\mathcal{C}'$ is finitely irreducible and representable
by loops, and that  $\mathcal{C}$ be compactly finitely embeddable
into $\mathcal{C}'$. Then \[
\sup_{K\in C_{\Sigma,\sigma}}\mathcal{P}_{\psi,K}\left(\varphi,\mathcal{C}\right)=\mathcal{P}_{\psi}\left(\varphi,\mathcal{C}'\right).\]
In particular, we have \[
\left(\psi,\varphi,\mathcal{C}\right)\,\mbox{satisfies the exhausting principle }\iff\mathcal{P}_{\psi}\left(\varphi,\mathcal{C}\right)=\mathcal{P}_{\psi}\left(\varphi,\mathcal{C}'\right).\]
\end{thm}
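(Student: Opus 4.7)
The plan is to establish the supremum identity first, as the equivalence stated in the ``in particular'' part will then drop out by combining the identity with monotonicity.

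For the inequality $\sup_{K\in C_{\Sigma,\sigma}}\mathcal{P}_{\psi,K}(\varphi,\mathcal{C})\le\mathcal{P}_{\psi}(\varphi,\mathcal{C}')$, fix $K\in C_{\Sigma,\sigma}$. Since $\mathcal{C}$ is compactly finitely embeddable into $\mathcal{C}'$, the subcollection $\mathcal{C}\cap K^{*}$ is finitely embeddable into $\mathcal{C}'$. By Lemma \ref{lem:pressure-finitelyembeddable} (applied with the H\"older continuity hypothesis on $\varphi,\psi$) we conclude
\[
\mathcal{P}_{\psi,K}(\varphi,\mathcal{C})=\mathcal{P}_{\psi}(\varphi,\mathcal{C}\cap K^{*})\le\mathcal{P}_{\psi}(\varphi,\mathcal{C}'),
\]
where the first equality uses Remark \ref{rem:def_forhoelderpotentials}. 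Taking the supremum over $K$ yields the bound.

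For the reverse inequality the key idea is to import the exhausting principle from the loop space over $\mathcal{C}'$ and then transfer it back. Since $\mathcal{C}'$ is representable by loops and finitely irreducible, the loop alphabet $(\mathcal{C}')^{\mathrm{smpl}}$ together with the incidence relation induced on $\widetilde{\Sigma}_{\mathcal{C}'}$ is finitely irreducible as well; hence Corollary \ref{cor:exhausting-if-finitely-irred} applies to the induced system, giving that the exhausting principle holds for $(\tilde\psi,\tilde\varphi,(\widetilde{\Sigma}_{\mathcal{C}'})^{*})$. By Lemma \ref{lem:Cloopexhausting-Cexhausting} (using Theorem \ref{thm:invariance under inducing} to identify the two pressures) this transfers to the statement that $(\psi,\varphi,\mathcal{C}')$ satisfies the exhausting principle. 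Thus there exists $(K_{n})\subset C_{\Sigma,\sigma}$ with $\mathcal{P}_{\psi,K_{n}}(\varphi,\mathcal{C}')\to\mathcal{P}_{\psi}(\varphi,\mathcal{C}')$. The inclusion $\mathcal{C}'\subset\mathcal{C}$ together with Proposition \ref{fac:pressure}\,(\ref{enu:(Monotonicity)}) gives $\mathcal{P}_{\psi,K_{n}}(\varphi,\mathcal{C}')\le\mathcal{P}_{\psi,K_{n}}(\varphi,\mathcal{C})$, so passing to the limit gives $\mathcal{P}_{\psi}(\varphi,\mathcal{C}')\le\sup_{K}\mathcal{P}_{\psi,K}(\varphi,\mathcal{C})$.

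Having established the identity, the equivalence is then bookkeeping. If the exhausting principle holds for $(\psi,\varphi,\mathcal{C})$, pick $K_{n}$ with $\mathcal{P}_{\psi,K_{n}}(\varphi,\mathcal{C})\to\mathcal{P}_{\psi}(\varphi,\mathcal{C})$; by the identity already proved and monotonicity in the collection argument one sandwiches $\mathcal{P}_{\psi}(\varphi,\mathcal{C})$ between $\mathcal{P}_{\psi}(\varphi,\mathcal{C}')$ and itself, forcing equality. Conversely, if the two pressures agree, the identity says $\sup_{K}\mathcal{P}_{\psi,K}(\varphi,\mathcal{C})=\mathcal{P}_{\psi}(\varphi,\mathcal{C})$, so any sequence $(K_{n})$ realising the supremum witnesses the exhausting principle. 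The main obstacle I expect is the second step: ensuring that the finite irreducibility of $\mathcal{C}'$ really transfers to finite irreducibility of the induced Markov structure on $\widetilde{\Sigma}_{\mathcal{C}'}$ so that Corollary \ref{cor:exhausting-if-finitely-irred} is applicable; this should however be a direct unwinding of the definitions of the $\mathcal{C}'$-loop space once one notes that connecting words $D\in\Lambda$ witnessing finite irreducibility of $\mathcal{C}'$ can be absorbed into finite concatenations of simple loops.
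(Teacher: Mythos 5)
The $\le$ direction of the supremum identity and the deduction of the equivalence are correct and match the paper's steps. The problem is the reverse inequality. You invoke Corollary \ref{cor:exhausting-if-finitely-irred} to assert that $(\tilde\psi,\tilde\varphi,\widetilde{\Sigma}_{\mathcal{C}'}^{*})$ satisfies the exhausting principle, but in the paper that corollary comes \emph{after} Theorem \ref{thm:exhaustable-part}: it is exactly the case $\mathcal{C}'=\mathcal{C}$ of the theorem you are trying to prove. Applying it to the loop space does not break the circle, since the corollary is stated for arbitrary countable Markov shifts and is itself derived from the theorem.

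The content you need to supply in place of that citation is precisely the first half of the paper's proof. Finite irreducibility of the loop-space incidence matrix gives, via the argument of \citep[Theorem 2.1.5]{MR2003772}, that $(1,\tilde\varphi-\beta\tilde\psi,\widetilde{\Sigma}_{\mathcal{C}'}^{*})$ satisfies the exhausting principle for every $\beta\in\mathbb{R}$ --- only the scaling function $1$. Passing from $\psi=1$ to the general scaling $\tilde\psi>0$ is not ``a direct unwinding of definitions'': it is a separate bootstrap. The paper does this by using Corollary \ref{cor:pressureformula_ifstreactlydescreasing} to express $\mathcal{P}_{\tilde\psi}(\tilde\varphi,\widetilde{\Sigma}_{\mathcal{C}'}^{*})$ as the pseudo-inverse $\inf\{\beta:\mathcal{P}_1(\tilde\varphi-\beta\tilde\psi,\cdot)\le0\}$, choosing $\beta=\mathcal{P}_{\tilde\psi}(\tilde\varphi,\cdot)-\delta$, extracting a compact $\widetilde K$ on which the $\psi=1$ pressure at this $\beta$ stays positive, and then using Corollary \ref{cor:pressure_rescaling} to push this back to $\mathcal{P}_{\tilde\psi,\widetilde K}(\tilde\varphi,\cdot)\ge\mathcal{P}_{\tilde\psi}(\tilde\varphi,\cdot)-\delta$. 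Only then can one apply Lemma \ref{lem:Cloopexhausting-Cexhausting} and Proposition \ref{fac:pressure}(\ref{enu:(Monotonicity)}) as you do. So your high-level strategy is the same as the paper's, but the key step is replaced by a forward reference to a consequence of the theorem itself; you would need to spell out the $\psi=1$ case and the rescaling argument to close the gap.
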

\begin{proof}
Since $\mathcal{C}'$ is finitely irreducible and $\tilde{\varphi},\tilde{\psi}$
are Hölder continuous, making a similar calculation as in the proof
of \citep[Theorem 2.1.5]{MR2003772} we readily find that $\left(1,\tilde{\varphi}-\beta\tilde{\psi},\widetilde{\Sigma}_{\mathcal{C}'}^{*}\right)$
satisfies the exhausting principle for each $\beta\in\R$. Next, we
show that also $\left(\tilde{\psi},\tilde{\varphi},\widetilde{\Sigma}_{\mathcal{C}'}^{*}\right)$
satisfies the exhausting principle. For this, let $\delta>0$. By
Corollary \ref{cor:pressureformula_ifstreactlydescreasing} we have
$\mathcal{P}_{\tilde{\psi}}\left(\tilde{\varphi},\widetilde{\Sigma}_{\mathcal{C}'}^{*}\right)=\inf\left\{ \beta\in\R:\mathcal{P}_{1}\left(\tilde{\varphi}-\beta\tilde{\psi},\widetilde{\Sigma}_{\mathcal{C}'}^{*}\right)\le0\right\} $
and hence \[
\mathcal{P}_{1}\left(\tilde{\varphi}-\left(\mathcal{P}_{\tilde{\psi}}\left(\tilde{\varphi},\widetilde{\Sigma}_{\mathcal{C}'}^{*}\right)-\delta\right)\tilde{\psi},\widetilde{\Sigma}_{\mathcal{C}'}^{*}\right)>0.\]
Since the exhausting principle holds for $\left(1,\tilde{\varphi}-\beta\tilde{\psi},\widetilde{\Sigma}_{\mathcal{C}'}^{*}\right)$
with $\beta\in\R$, we find a compact $\tilde{\sigma}$-invariant
subset $\widetilde{K}\subset\widetilde{\Sigma}_{\mathcal{C}'}$, such
that\[
\mathcal{P}_{1,\widetilde{K}}\left(\tilde{\varphi}-\left(\mathcal{P}_{\tilde{\psi}}\left(\tilde{\varphi},\widetilde{\Sigma}_{\mathcal{C}'}^{*}\right)-\delta\right)\tilde{\psi},\widetilde{\Sigma}_{\mathcal{C}'}^{*}\right)>0.\]
 Using Corollary \ref{cor:pressure_rescaling} we get \[
\mathcal{P}_{\tilde{\psi},K}\left(\tilde{\varphi},\mathcal{C}'\right)\ge\inf\left\{ \beta\in\R:\mathcal{P}_{1,\widetilde{K}}\left(\tilde{\varphi}-\beta\tilde{\psi},\mathcal{C}\right)\le0\right\} \ge\mathcal{P}_{\tilde{\psi}}\left(\tilde{\varphi},\mathcal{C}'\right)-\delta\]
and hence $\left(\tilde{\psi},\tilde{\varphi},\widetilde{\Sigma}_{\mathcal{C}'}^{*}\right)$
satisfies the exhausting principle.

By Lemma \ref{lem:Cloopexhausting-Cexhausting} also $\left(\psi,\varphi,\mathcal{C}'\right)$
satisfies the exhausting principle. Using this and Proposition \ref{fac:pressure}
(\ref{enu:(Monotonicity)}) we therefore have \[
\sup_{K\in C_{\Sigma,\sigma}}\mathcal{P}_{\psi,K}\left(\varphi,\mathcal{C}\right)\ge\sup_{K\in C_{\Sigma,\sigma}}\mathcal{P}_{\psi,K}\left(\varphi,\mathcal{C}'\right)=\mathcal{P}_{\psi}\left(\varphi,\mathcal{C}'\right).\]
For the reverse inequality notice that $\mathcal{C}\cap K^{*}$ is
finitely embeddable into $\mathcal{C}'$ for $K\in C_{\Sigma,\sigma}$,
which by Lemma \ref{lem:pressure-finitelyembeddable} implies that
\[
\mathcal{P}_{\psi,K}\left(\varphi,\mathcal{C}\right)\le\mathcal{P}_{\psi}\left(\varphi,\mathcal{C}'\right).\]
\end{proof}
\begin{cor}
\label{cor:exhausting-if-finitely-irred}For Hölder continuous functions
$\varphi,\psi$ with $\psi>0$ and $\mathcal{C}\subset\Sigma^{*}$
finitely irreducible and representable by loops, we have that $\left(\psi,\varphi,\mathcal{C}\right)$
satisfies the exhausting principle. \end{cor}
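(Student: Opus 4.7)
The plan is to deduce this corollary from Theorem \ref{thm:exhaustable-part} by simply taking $\mathcal{C}'=\mathcal{C}$. Under that choice, the two structural hypotheses of the theorem, namely that $\mathcal{C}'$ is finitely irreducible and representable by loops, are exactly what is given. Moreover the equality $\mathcal{P}_{\psi}\left(\varphi,\mathcal{C}\right)=\mathcal{P}_{\psi}\left(\varphi,\mathcal{C}'\right)$ holds tautologically, so the ``in particular'' part of Theorem \ref{thm:exhaustable-part} immediately yields the exhausting principle. Equivalently, the first conclusion
\[
\sup_{K\in C_{\Sigma,\sigma}}\mathcal{P}_{\psi,K}\left(\varphi,\mathcal{C}\right)=\mathcal{P}_{\psi}\left(\varphi,\mathcal{C}\right)
\]
directly produces an exhausting sequence $\left(K_{n}\right)\subset C_{\Sigma,\sigma}$ realising the supremum in the limit. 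The only hypothesis not handed to us is that $\mathcal{C}$ is compactly finitely embeddable into itself, so essentially all the work is in verifying this single condition.

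For this, I would prove the stronger statement that $\mathcal{C}$ is in fact finitely embeddable into $\mathcal{C}$, with no restriction to a compact subsystem. Fix a reference loop $C_{0}\in\mathcal{C}$ and let $\Lambda_{0}\subset\Sigma^{*}$ be the finite witness for the finite irreducibility of $\mathcal{C}$. Given $\omega\in\mathcal{C}$, applying finite irreducibility to the pair $\left(C_{0},\omega\right)$ produces $D_{1}\in\Lambda_{0}\cup\left\{\emptyset\right\}$ with $C_{0}D_{1}\omega\in\mathcal{C}$; a second application to the pair $\left(C_{0}D_{1}\omega,C_{0}\right)$ gives $D_{2}\in\Lambda_{0}\cup\left\{\emptyset\right\}$ with $C_{0}D_{1}\omega D_{2}C_{0}\in\mathcal{C}$. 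Setting $\tau_{1}:=C_{0}D_{1}$ and $\tau_{2}:=D_{2}C_{0}$ exhibits the embedding, with both words drawn from the finite set
\[
\Lambda:=\{C_{0}D:D\in\Lambda_{0}\cup\{\emptyset\}\}\cup\{DC_{0}:D\in\Lambda_{0}\cup\{\emptyset\}\}\subset\Sigma^{*}.
\]

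The one subtle point I anticipate is that the definition of finite embeddability insists that the witness words $\tau_{1},\tau_{2}$ live in $\Sigma^{*}$ and are therefore non-empty, which rules out the naive attempt ``embed $\omega$ as $\omega$ itself''. Concatenating on each side with a fixed loop $C_{0}\in\mathcal{C}$ side-steps this issue essentially for free: it guarantees non-emptiness of the prefixes and suffixes while keeping the whole word inside $\mathcal{C}$ thanks to closure under concatenation. Once this is in place, Theorem \ref{thm:exhaustable-part} can be invoked with $\mathcal{C}'=\mathcal{C}$ and delivers the exhausting principle for $\left(\psi,\varphi,\mathcal{C}\right)$, completing the argument.
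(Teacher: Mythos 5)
Your approach is correct and matches the intended one: apply Theorem \ref{thm:exhaustable-part} with $\mathcal{C}'=\mathcal{C}$, after which the ``in particular'' equivalence discharges the exhausting principle tautologically. You correctly identify that the only hypothesis of the theorem not handed to you directly is compact finite embeddability of $\mathcal{C}$ into itself, and your verification is sound: the definition of finite embeddability insists on $\tau_1,\tau_2\in\Lambda\subset\Sigma^{*}$, hence non-empty, so ``embed $\omega$ as itself'' fails and padding with a fixed loop $C_0$ on both sides (using closure under concatenation and two applications of finite irreducibility) is exactly the right fix. One small cosmetic point: the set $\Lambda$ you write down may contain strings $C_0D$ or $DC_0$ that are not admissible; one should intersect with $\Sigma^{*}$, which is harmless because the $\tau_1,\tau_2$ actually produced are subwords of the $\mathcal{C}$-word $C_0D_1\omega D_2C_0$ and hence automatically admissible.

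It is worth noting, however, that this verification could have been avoided entirely. If you trace the proof of Theorem \ref{thm:exhaustable-part}, the compact-finite-embeddability hypothesis is used only in the last step, to obtain $\mathcal{P}_{\psi,K}(\varphi,\mathcal{C})\le\mathcal{P}_{\psi}(\varphi,\mathcal{C}')$ via Lemma \ref{lem:pressure-finitelyembeddable}; with $\mathcal{C}'=\mathcal{C}$ that inequality is already immediate from the monotonicity in Proposition \ref{fac:pressure}\,(\ref{enu:(Monotonicity)}). Equivalently, the first part of the theorem's proof establishes, using only that $\mathcal{C}'$ is finitely irreducible and representable by loops, that $(\psi,\varphi,\mathcal{C}')$ satisfies the exhausting principle — which with $\mathcal{C}'=\mathcal{C}$ is precisely the corollary. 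Your route has the virtue of applying the theorem as a black box rather than appealing to its proof, at the cost of the extra (correct) embeddability argument.
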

\begin{defn}
For a fixed starting set $J_{s}\subset I$ and a terminating set $J_{t}\subset I$
we denote by $\mathcal{C}=\left\{ \omega\in\Sigma^{*}:\omega_{1}\in J_{s}\right\} $
the \emph{set of starting words in $J_{s}$} and by $\mathcal{C}'=\left\{ \omega\in\mathcal{C}:\omega_{1}\in J_{s},\omega_{\left|\omega\right|}\in J_{t}\right\} $
the \emph{set of bridges from $J_{s}$ to $J_{t}$. }\end{defn}
\begin{cor}
[Exhausting principle for bridges]\label{cor:bridge-exhausting}Let
the incidence matrix of $\Sigma$ be irreducible and let $\varphi,\psi:\Sigma\rightarrow\R$
be Hölder continuous, $\psi>0$. Furthermore, let $\mathcal{C}$ be
the set of words starting in the non-empty and finite set $J_{s}\subset I$
and let $\mathcal{C}'$ be the set of bridges from $J_{s}$ to the
non-empty and finite set $J_{t}\subset I$. Then \[
\sup_{K\in C_{\Sigma,\sigma}}\mathcal{P}_{\psi,K}\left(\varphi,\mathcal{C}\right)=\mathcal{P}_{\psi}\left(\varphi,\mathcal{C}'\right).\]
In particular, we have \[
\left(\psi,\varphi,\mathcal{C}\right)\,\mbox{satisfies the exhausting principle }\iff\mathcal{P}_{\psi}\left(\varphi,\mathcal{C}\right)=\mathcal{P}_{\psi}\left(\varphi,\mathcal{C}'\right).\]
\end{cor}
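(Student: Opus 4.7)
The plan is to reduce this statement to Theorem \ref{thm:exhaustable-part} by verifying its three hypotheses for the pair $(\mathcal{C}, \mathcal{C}')$ described here; once accomplished, both assertions of the corollary follow immediately, the second being exactly the equivalence provided by that theorem.

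First I verify that $\mathcal{C}'$ is representable by loops. Closure under concatenations is immediate: the first letter of $\omega_1\omega_2$ equals that of $\omega_1$ (hence lies in $J_s$) and the last letter equals that of $\omega_2$ (hence lies in $J_t$). The refinement property is equally routine: if $\omega_1\omega_2, \omega_2\omega_3 \in \mathcal{C}'$, then the first letter of $\omega_2$ coincides with that of $\omega_2\omega_3 \in \mathcal{C}'$ and the last letter of $\omega_2$ coincides with that of $\omega_1\omega_2 \in \mathcal{C}'$. Next, finite irreducibility of $\mathcal{C}'$ follows from the finiteness of $J_s$ and $J_t$ combined with irreducibility of $\Sigma$: for each pair $(t, s) \in J_t \times J_s$ choose a word $D_{t, s}$ with $t D_{t, s} s \in \Sigma^*$, and let the finite collection of all such $D_{t, s}$ serve as the witness in Definition \ref{def:Finitely_irreducible_C}.

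Finally, for the compactly-finitely-embeddable hypothesis, fix $K \in C_{\Sigma, \sigma}$. Since $K$ is compact and $\sigma$-invariant, the argument from Lemma \ref{lem:Cloopexhausting-Cexhausting} shows that $I_K := \bigcup_{j \in \N} \pi_j(K)$ is a finite subset of $I$, so every $\omega \in \mathcal{C} \cap K^*$ has first letter in $J_s$ and last letter in $I_K$. Fixing auxiliary letters $s_\ast \in J_s$ and $t_\ast \in J_t$, irreducibility of $\Sigma$ provides, for each $s' \in J_s$, a word $v_{s'} \in \Sigma^*$ starting at $s_\ast$ whose last letter admissibly precedes $s'$, and, for each $i \in I_K$, a word $u_i \in \Sigma^*$ ending at $t_\ast$ such that $i u_i \in \Sigma^*$. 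Setting $\tau_1 := v_{\omega_1}$ and $\tau_2 := u_{\omega_{|\omega|}}$, the concatenation $\tau_1 \omega \tau_2$ is admissible, starts at $s_\ast \in J_s$, and ends at $t_\ast \in J_t$, hence belongs to $\mathcal{C}'$; the finite collection $\Lambda := \{v_{s'} : s' \in J_s\} \cup \{u_i : i \in I_K\}$ then witnesses the embedding of $\mathcal{C} \cap K^*$ into $\mathcal{C}'$. Theorem \ref{thm:exhaustable-part} now yields both claims of the corollary. No step appears to present a real obstacle; the only care required is the bookkeeping of first and last letters to ensure that the constructed concatenations genuinely lie in $\mathcal{C}'$.
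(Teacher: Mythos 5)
Your proof is correct and follows essentially the same route as the paper: reduce to Theorem \ref{thm:exhaustable-part} by checking that $\mathcal{C}'$ is representable by loops and finitely irreducible, and that $\mathcal{C}$ is compactly finitely embeddable into $\mathcal{C}'$, the last step relying as in the paper on the finiteness of the alphabet of a compact $\sigma$-invariant $K$. The only cosmetic difference is in exhibiting the finite embedding set $\Lambda$: the paper fixes a single $\gamma\in\mathcal{C}'$ and forms $\gamma\tau_{1}\omega\tau_{2}\gamma$, whereas you build the flanking words $v_{s'}$, $u_{i}$ directly so that they already start in $J_{s}$ and end in $J_{t}$; both yield the same conclusion.
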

\begin{proof}
We will verify the assumptions of Theorem \ref{thm:exhaustable-part}.
Clearly, $\mathcal{C}'$ is closed under concatenations, has the refinement
property and is hence representable by loops. Since $\Sigma$ is irreducible
and the sets $J_{s}$ and $J_{t}$ are finite we conclude that $\mathcal{C}'$
is finitely irreducible. It remains to show that $\mathcal{C}$ is
compactly finitely embeddable into $\mathcal{C}'$. Let $K\in C_{\Sigma,\sigma}$
be compact and $\sigma$-invariant. Repeating the arguments in the
proof of Lemma \ref{lem:Cloopexhausting-Cexhausting} we find that
$K\subset\Sigma_{N}$ for some $N\in\N$. Fix some $\gamma\in\mathcal{C}'$.
By the definition of $\mathcal{C}'$ and the fact that $\Sigma$ is
irreducible we find for all $\omega\in\mathcal{C}\cap K^{*}$ elements
$\tau_{1},\tau_{2}\in\Sigma^{*}$ such that $\gamma\tau_{1}\omega\tau_{2}\gamma\in\mathcal{C}'$.
In fact, $\tau_{1},\tau_{2}$ can be taken from a finite set $\Lambda_{K}\subset\Sigma^{*}$,
since the elements $\omega\in\mathcal{C}\cap K^{*}$ are constructed
over the finite alphabet $\left\{ 1,\ldots,N\right\} $. \end{proof}
\begin{example}
[Simple Random Walk]\label{exa:Z-extension}By Corollary \ref{cor:exhausting-if-finitely-irred}
we know that a sufficient condition for the exhausting principle to
hold for $\left(\psi,\varphi,\mathcal{C}\right)$ and arbitrary Hölder
continuous functions $\varphi$ and $\psi>0$ is that $\mathcal{C}$
is finitely irreducible and representable by loops. This example shows
that this condition is not necessary.

Let us consider \[
\Sigma_{\Z}=\left\{ \left(\tau_{n},h_{n}\right)\in\left(\left\{ -1,+1\right\} \times\Z\right)^{\N}:h_{n}+\tau_{n}=h_{n+1}\right\} \]
together with the starting set $J_{s}:=\left\{ \left(1,0\right),\left(-1,0\right)\right\} $
and terminating set $J_{t}:=\left\{ \left(1,-1\right),\left(-1,1\right)\right\} $.
The corresponding sets of starting words in $J_{s}$ and bridges from
$J_{s}$ to $J_{t}$ are then given by \[
\mathcal{C}:=\left\{ \omega\in\Sigma_{\Z}^{*}:h_{1}=0\right\} ,\quad\mathcal{C}':=\left\{ \omega\in\mathcal{C}:h_{\left|\omega\right|}+\tau_{\left|\omega\right|}=0\right\} .\]
Notice that $\Sigma_{\Z}$ is irreducible. It is also evident that
$\mathcal{C}$ is not finitely irreducible. Nevertheless, we will
show that $\left(1,0,\mathcal{C}\right)$ satisfies the exhausting
principle. By Corollary \ref{cor:bridge-exhausting} this is equivalent
to $\mathcal{P}_{1}\left(0,\mathcal{C}\right)=\mathcal{P}_{1}\left(0,\mathcal{C}'\right)$. 

Clearly, $\mathcal{P}_{1}\left(0,\mathcal{C}\right)=\log2$ and we
will prove that also $\mathcal{P}_{1}\left(0,\mathcal{C}'\right)=\log2$.
For this note that $\card\left\{ \omega\in\mathcal{C}',\left|\omega\right|=2n\right\} ={2n \choose n}$,
which is by Stirling's formula comparable to $2^{2n}n^{-1/2}$. Taking
logarithm and dividing by $2n$ proves the assertion.

Let us also give an alternative proof using the invariance under inducing
as stated in Theorem \ref{thm:invariance under inducing}. We have
\[
\mathcal{P}_{1}\left(0,\mathcal{C}'\right)=\mathcal{P}_{\tilde{1}}\left(0,\widetilde{\Sigma}_{\mathcal{C}'}^{*}\right)=\inf\left\{ t\in\R:\mathcal{P}_{1}\left(-\beta\tilde{1},\widetilde{\Sigma}_{\mathcal{C}'}^{*}\right)\le0\right\} .\]
 Since $\widetilde{\Sigma}_{\mathcal{C}'}$ is a full shift, \[
\mathcal{P}_{1}\left(-\beta\tilde{1},\widetilde{\Sigma}_{\mathcal{C}'}^{*}\right)=\log\sum_{\omega\in\left(\mathcal{C}'\right)^{\mathrm{smpl}}}\e^{-\beta\left|\omega\right|}.\]
Following \citep{MR0109367} we compute \[
\card\left\{ \omega\in\left(\mathcal{C}'\right)^{\mathrm{smpl}},\left|\omega\right|=2n\right\} =\frac{2}{n}{2n-2 \choose n-1}\]
and using the Binomial Theorem we find for $\beta\ge\log2$ that $\mathcal{P}_{1}\left(-\beta\tilde{1},\widetilde{\Sigma}_{\mathcal{C}'}^{*}\right)$
is finite and equal to $\log\big(\,1-\sqrt{1-\left(\log2-\beta\right)^{2}}\,\big)$.
We conclude $\mathcal{P}_{1}\left(-\log2\cdot\tilde{1},\widetilde{\Sigma}_{\mathcal{C}'}^{*}\right)=0$
and hence $\mathcal{P}_{1}\left(0,\mathcal{C}'\right)=\log2$. 
\end{example}
In fact, the above example is a special case of the following general
Theorem. Let $\mathbb{F}_{k}:=\left\langle g_{1},\dots,g_{k}\right\rangle $
denote the free group of rank $k\ge1$ and define $I:=\left\{ g_{1},\dots,g_{k},g_{1}^{-1},\dots,g_{k}^{-1}\right\} $
to be the set of symmetric generators of $\mathbb{F}_{k}$. Let $N$
be a normal subgroup of $\mathbb{F}_{k}$ and let $\pi:\mathbb{F}_{k}\rightarrow\mathbb{F}_{k}/N=:G$
be the canonical factor map. Let us consider the two naturally associated
Markov shifts \[
\Sigma_{G}:=\left\{ \left(\tau_{n},h_{n}\right)\in\left(I\times G\right)^{\N}:h_{n}\pi\left(\tau_{n}\right)=h_{n+1}\right\} \]
and \[
\underline{\Sigma}_{G}:=\left\{ \left(\tau_{n},h_{n}\right)\in\left(I\times G\right)^{\N}:\tau_{n}\tau_{n+1}\neq\id,h_{n}\pi\left(\tau_{n}\right)=h_{n+1}\right\} ,\]
where in the latter case we always assume $k\ge2$ and $N\neq\left\{ \id\right\} $. 

Note, that this construction was introduced before e.g.\ in \citep{MR2337557},
where it was called the modular shift space in the context of finite
index subgroups of $\mbox{PSL}_{2}\left(\mathbb{Z}\right)$. This
representation is closely connected to a skew product dynamical system,
since for $\left(\tau_{n},h_{n}\right)\in\Sigma_{G}$ or $\underline{\Sigma}_{G}$
we have $h_{n}=h_{1}\pi\left(\tau_{1}\right)\cdots\pi\left(\tau_{n}\right)$
for $n\ge2$. 
\begin{thm}
[Group extension] \label{thm:-Group-Extension}For $\Xi$ either
equal to $\Sigma_{G}$ or $\underline{\Sigma}_{G}$, let $\mathcal{C}'\subset\mathcal{C}$
be given by \[
\mathcal{C}:=\left\{ \omega\in\Xi:h_{1}=\id\right\} \;\textrm{ and }\;\mathcal{C}':=\left\{ \omega\in\mathcal{C}:h_{\left|\omega\right|}\pi\left(\tau_{\left|\omega\right|}\right)=\id\right\} .\]
We have the following chain of equivalences. \begin{eqnarray*}
G\,\mbox{is amenable} & \iff & \left(1,0,\mathcal{C}\right)\,\mbox{satisfies the exhausting principle}\\
 & \iff & \mathcal{P}_{1}\left(0,\mathcal{C}\right)=\mathcal{P}_{1}\left(0,\mathcal{C}'\right).\end{eqnarray*}
Moreover,\[
G\;\mbox{ finite }\iff\mathcal{C}\;\mbox{ finite irreducible }\iff\;\Xi\;\mbox{compact.}\]
 \end{thm}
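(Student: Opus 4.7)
The strategy is to identify $(\mathcal{C},\mathcal{C}')$ as a pair (starting words, bridges), apply Corollary \ref{cor:bridge-exhausting} to obtain the exhausting-principle equivalence, and then reduce the amenability equivalence to classical Kesten-type criteria by explicitly computing the two pressures involved.

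First I would set up the bridge identification. Put $J_s := \{(\tau,\id) : \tau \in I\}$ and $J_t := \{(\tau,\pi(\tau)^{-1}) : \tau \in I\}$; these are finite subsets of the alphabet $I \times G$ of $\Xi$. By definition $\omega \in \mathcal{C}$ iff $\omega_1 \in J_s$, while the defining condition $h_{|\omega|}\pi(\tau_{|\omega|}) = \id$ of $\mathcal{C}'$ is precisely $\omega_{|\omega|} \in J_t$; hence $\mathcal{C}$ is the set of starting words in $J_s$ and $\mathcal{C}'$ is the set of bridges from $J_s$ to $J_t$. The shift $\Xi$ is irreducible because $\pi(I)$ generates $G$; in the reduced case the hypotheses $k \geq 2$ and $N \neq \{\id\}$ make the non-backtracking interpolation feasible. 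Corollary \ref{cor:bridge-exhausting} now yields at once the second equivalence in the chain: $(1,0,\mathcal{C})$ satisfies the exhausting principle iff $\mathcal{P}_1(0,\mathcal{C}) = \mathcal{P}_1(0,\mathcal{C}')$.

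Next I would compute the two pressures. A word in $\mathcal{C}$ of length $n$ is determined by its $\tau$-sequence once $h_1 = \id$ is fixed, yielding $(2k)^n$ such words for $\Xi = \Sigma_G$ and $2k(2k-1)^{n-1}$ for $\Xi = \underline{\Sigma}_G$. Hence $\mathcal{P}_1(0,\mathcal{C})$ equals $\log(2k)$ or $\log(2k-1)$ respectively. The analogous count identifies $\mathcal{P}_1(0,\mathcal{C}')$ as the logarithm of the exponential growth rate of length-$n$ walks (resp.\ non-backtracking walks) on the Cayley graph of $(G,\pi(I))$ returning to $\id$. The identity $\mathcal{P}_1(0,\mathcal{C}) = \mathcal{P}_1(0,\mathcal{C}')$ thus becomes, for $\Sigma_G$, the condition that the spectral radius of the simple random walk on $G$ equals $1$, which is Kesten's amenability criterion; and for $\underline{\Sigma}_G$, the cogrowth of $N$ in $\mathbb{F}_k$ attaining its maximum value $2k-1$, which by the Grigorchuk--Cohen cogrowth theorem is equivalent to amenability of $G$. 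This is the substantive step and where I expect the main obstacle to lie, as it relies on these classical deep results; once they are in hand, the first equivalence follows.

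The moreover part is elementary. The shift $\Xi$ is compact iff its alphabet $I \times G$ is finite, iff $G$ is finite. If $G$ is finite then any two elements are connected by a word of bounded length in the Cayley graph, yielding a finite witness $\Lambda$ for finite irreducibility of $\mathcal{C}$. Conversely, if $\mathcal{C}$ is finitely irreducible with finite $\Lambda$, then for every $C_1 \in \mathcal{C}$ the endpoint $h_{|C_1|}\pi(\tau_{|C_1|}) \in G$ must coincide with the first $h$-coordinate of some $D \in \Lambda$ (or with $\id$ if $D = \emptyset$); irreducibility of $\Xi$ implies these endpoints exhaust $G$, forcing $G$ to be finite.
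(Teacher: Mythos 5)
Your proposal is correct and follows essentially the same route as the paper: identify $(\mathcal{C},\mathcal{C}')$ as starting words / bridges, apply Corollary \ref{cor:bridge-exhausting}, and then reduce the pressure identity to Kesten's spectral-radius criterion for $\Sigma_G$ and to the Grigorchuk--Cohen cogrowth criterion for $\underline{\Sigma}_G$. Your treatment of the "moreover" part spells out the (elementary) endpoint-counting argument that the paper leaves as an immediate consequence, but the substance is the same.
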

\begin{rem}
\label{rem:correspondence-C-Fk}For $\Xi=\underline{\Sigma}_{G}$
there is a canonical one-to-one correspondence between both the sets
$\mathcal{C}$ and $\mathbb{F}_{k}$, and the sets $\mathcal{C}'$
and $N$. This allows an interpretation of our result in terms of
the Poincaré exponent. \end{rem}
\begin{proof}
One easily verifies that the incidence matrix of $\Xi$ is irreducible.
Note that $\mathcal{C}$ is in fact the set of starting words in $J_{s}:=\left\{ \left(g,\id\right):g\in I\right\} $
and $\mathcal{C}'$ is the set of bridges from $J_{s}$ to $J_{t}:=\left\{ \left(g,\pi\left(g^{-1}\right)\right):g\in I\right\} $.
Hence, by Corollary \ref{cor:bridge-exhausting} we have \[
\left(1,0,\mathcal{C}\right)\,\mbox{satisfies the exhausting principle\ensuremath{\iff}}\mathcal{P}_{1}\left(0,\mathcal{C}\right)=\mathcal{P}_{1}\left(0,\mathcal{C}'\right).\]
Let us first consider the case $\Xi=\Sigma_{G}$. In order to prove
that this is also equivalent to $G$ being amenable we make use of
Kesten's characterisation of finitely generated amenable groups (\citep{MR0109367,MR0112053}).
We introduce the stochastic matrix \[
p\left(g,g'\right):=\frac{1}{2k}\card\left\{ 1\le i\le2k:g\pi\left(g_{i}\right)=g'\right\} ,\quad g,g'\in G,\]
which defines a symmetric random walk on $G$ with the property that
\[
\mathcal{P}_{1}\left(0,\mathcal{C}'\right)=\log\left(2k\right)+\limsup_{n}\frac{1}{n}\log p^{\left(n\right)}\left(\id,\id\right).\]
By (\citep[Main Theorem]{MR0112053}) we have $\limsup_{n}\frac{1}{n}\log p^{\left(n\right)}\left(\id,\id\right)=0$,
if and only if $G$ is amenable. Since $\mathcal{P}_{1}\left(0,\mathcal{C}\right)=\log\left(2k\right)$
the assertion follows.

Next we consider the case $\Xi=\underline{\Sigma}_{G}$ for $k\ge2$
and $N\neq\left\{ \id\right\} $. Clearly, in this case we have $\mathcal{P}_{1}\left(0,\mathcal{C}\right)=\log\left(2k-1\right)$.
By Remark \ref{rem:correspondence-C-Fk} the elements of $\mathcal{C}'$
correspond to the elements of $N$. By Grigorchuk's cogrowth criterion
(see \citep{MR599539} and also \citep{MR678175,MR0214137,MR1929333})
we conclude that $\mathcal{P}_{1}\left(0,\mathcal{C}'\right)=\log\left(2k-1\right)$,
if and only if $G$ is amenable. This finishes the proof of the first
part. 

The second statement of the theorem is in both cases an immediate
consequence of the construction of $\Xi$ and $\mathcal{C}$. In fact,
for $G$ finite, $\Xi$ is a subshift of finite type with finite state
space. 
\end{proof}
Having regard to the results of Brooks in \citep{MR783536} we expect
the above characterisation of amenability to hold true for a much
larger class of Hölder continuous functions $\varphi$ and $\psi>0$.

\subsection{Gurevi\v c pressure}

In this section we apply our results to the Gurevi\v c pressure (cf.\
\citep{MR0263162,MR0268356,MR1738951,MR1818392,MR1955261}). Recall
that the\emph{ }Gurevi\v c\emph{ }pressure is defined only for mixing
subshifts of finite type and in that setting coincides with $\mathcal{P}_{1}\left(\varphi,\Sigma_{a}^{\mathrm{per}}\right)$
for $a\in I$. 

As a direct application of the invariance under inducing of Theorem
\ref{thm:invariance under inducing}, Lemma \ref{lem:Cloopexhausting-Cexhausting},
and Corollary \ref{cor:psipressure_asinf-underexhaustingprop} we
give a new description of the Gurevi\v c pressure in terms of the
classical pressure for infinite systems as investigated e.g.\ by
Mauldin and Urba\'nski in \citep{MR2003772}.
\begin{cor}
\label{cor:ExhaustGurevich} Let $\varphi,\psi:\Sigma\rightarrow\R$
be Hölder continuous, $\psi>0$. Then the exhausting principle holds
for $\left(\psi,\varphi,\Sigma_{a}^{\mathrm{per}}\right)$ and any
$a\in I$. Furthermore, we have \begin{eqnarray*}
\mathcal{P}_{\psi}\left(\varphi,\Sigma_{a}^{\mathrm{per}}\right) & = & \inf\left\{ \beta:\mathcal{P}_{1}\left(\varphi-\beta\psi,\Sigma_{a}^{\mathrm{per}}\right)\le0\right\} \\
 & = & \inf\left\{ \beta\in\R:\mathcal{P}_{1}\left(\tilde{\varphi}-\beta\tilde{\psi},\widetilde{\Sigma}_{a}^{\mathrm{per}}\right)\le0\right\} \\
 & = & \sup\left\{ \beta\in\R:\mathcal{P}_{1}\left(\tilde{\varphi}-\beta\tilde{\psi},\widetilde{\Sigma}_{a}^{\mathrm{per}}\right)\ge0\right\} .\end{eqnarray*}
\end{cor}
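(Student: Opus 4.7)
The plan is to check that $\mathcal{C}=\Sigma_{a}^{\mathrm{per}}$ fulfils the hypotheses of the structural results of Section 3 and then chain them together; the three displayed equalities and the exhausting principle follow formally.

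First I would verify that $\Sigma_{a}^{\mathrm{per}}$ is representable by loops and finitely irreducible with witnessing set $\Lambda=\emptyset$. Every element of $\Sigma_{a}^{\mathrm{per}}$ begins with the symbol $a$ and admits an admissible periodic extension, so its final symbol is always compatible with $a$; hence the concatenation of any two elements is automatically admissible, starts with $a$, and has an admissible periodic extension. This yields closure under concatenation and finite irreducibility with $\Lambda=\emptyset$, and the refinement property follows by the analogous argument on the factors. Corollary \ref{cor:exhausting-if-finitely-irred} then provides the exhausting principle for $(\psi,\varphi,\Sigma_{a}^{\mathrm{per}})$, which is the first assertion of the corollary, and Corollary \ref{cor:psipressure_asinf-underexhaustingprop} delivers the first displayed equality.

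Next I would pass to the loop space. Since any pair of simple $\Sigma_{a}^{\mathrm{per}}$-loops can be concatenated, $\widetilde{\Sigma}_{a}:=\widetilde{\Sigma}_{\Sigma_{a}^{\mathrm{per}}}$ is the full shift over the countable alphabet $\Sigma_{a}^{\mathrm{smpl}}$; in particular its incidence matrix is finitely irreducible and $\widetilde{\Sigma}_{a}^{*}=\widetilde{\Sigma}_{a}^{\mathrm{per}}$, because in a full shift every finite word extends periodically. The induced potentials $\tilde{\varphi}$ and $\tilde{\psi}$ are H\"older continuous by the bounded-distortion computation of Section \ref{sub:Construction-of-loop-spaces}, and $\tilde{\psi}>0$ since $\tilde{\psi}(\tilde{\omega})\ge\psi(\omega)>0$, so in particular $\tilde{\psi}^{-1}(\{0\})=\emptyset$ is countable. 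Applying Corollary \ref{cor:pressureformula_ifstreactlydescreasing} to $\widetilde{\Sigma}_{a}$ then yields
\[
\mathcal{P}_{\tilde{\psi}}(\tilde{\varphi},\widetilde{\Sigma}_{a}^{*})=\inf\bigl\{\beta:\mathcal{P}_{1}(\tilde{\varphi}-\beta\tilde{\psi},\widetilde{\Sigma}_{a}^{*})\le 0\bigr\}=\sup\bigl\{\beta:\mathcal{P}_{1}(\tilde{\varphi}-\beta\tilde{\psi},\widetilde{\Sigma}_{a}^{*})\ge 0\bigr\}.
\]

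Finally, invariance under inducing (Theorem \ref{thm:invariance under inducing}) identifies the left-hand side above with $\mathcal{P}_{\psi}(\varphi,\Sigma_{a}^{\mathrm{per}})$, and recalling the identification $\widetilde{\Sigma}_{a}^{*}=\widetilde{\Sigma}_{a}^{\mathrm{per}}$ closes the chain, producing the second and third equalities. No step of this plan poses a genuine obstacle; the only bookkeeping points requiring mild care are this identification in the full-shift setting and the transfer of H\"older continuity and strict positivity from $\varphi,\psi$ to their induced versions $\tilde{\varphi},\tilde{\psi}$.
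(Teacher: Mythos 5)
Your proof is correct and follows essentially the same route as the paper: establish that $\Sigma_a^{\mathrm{per}}$ is representable by loops and (with $\Lambda=\emptyset$) finitely irreducible, deduce the exhausting principle and the first equality, then pass to the loop space $\widetilde{\Sigma}_a$ (a full shift, so $\widetilde{\Sigma}_a^{*}=\widetilde{\Sigma}_a^{\mathrm{per}}$ and finitely irreducible) and combine Theorem \ref{thm:invariance under inducing} with Corollary \ref{cor:pressureformula_ifstreactlydescreasing}. The only cosmetic difference is that you invoke Corollary \ref{cor:exhausting-if-finitely-irred} for the exhausting principle, whereas the paper cites Lemma \ref{lem:Cloopexhausting-Cexsecting} directly; since that corollary is itself proved via that lemma, the underlying argument is identical.
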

\begin{rem}
In particular, for potentials $\psi$, such that the induced version
$\tilde{\psi}$ on the associated $\Sigma_{a}^{\mathrm{per}}$-loop
space is constant on cylindrical sets of words of lenght one \[
\mathcal{P}_{\psi}\left(0,\Sigma_{a}^{\mathrm{per}}\right)=\inf\left\{ \beta\in\R:\sum_{\omega\in\Sigma_{a}^{\mathrm{smpl}}}e^{-\beta S_{\omega}\psi}<1\right\} .\]
See also the dicussion of the special semi-flow presented in the introduction. 
\end{rem}
It is well known that the Gurevi\v c pressure for $\psi=1$ and a
mixing subshift of finite type is independent of $a\in I$. The following
generalisation for $\psi$--induced pressure follows from Corollary
\ref{cor:ExhaustGurevich} and Lemma \ref{lem:pressure-finitelyembeddable},
since $\Sigma_{a}^{\mathrm{per}}$ is finitely embeddable into $\Sigma_{b}^{\mathrm{per}}$
for $a,b\in I$. 
\begin{fact}
Let $\varphi,\psi:\Sigma\rightarrow\R$ be Hölder continuous, $\psi>0$,
and $A$ irreducible. Then we have that $\mathcal{P}_{\psi}\left(\varphi,\Sigma_{a}^{\mathrm{per}}\right)$
is independent of the choice of $a\in I$. 
\end{fact}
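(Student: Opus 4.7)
The plan is to show that for any two symbols $a,b\in I$ the set $\Sigma_{a}^{\mathrm{per}}$ is finitely embeddable into $\Sigma_{b}^{\mathrm{per}}$ (in the sense of the definition preceding Lemma \ref{lem:pressure-finitelyembeddable}), and then invoke that lemma together with the symmetric roles of $a$ and $b$ to deduce the equality of the two pressures. The use of Corollary \ref{cor:ExhaustGurevich} will only enter implicitly, through the fact that both pressures under consideration are finite values in $\overline{\mathbb{R}}$ so that the inequalities we collect actually compare like with like.

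First I would construct the ``bridging'' words that will form the finite set $\Lambda$. Since $A$ is irreducible, there exist admissible words $\tau^{b\to a}$ of the form $(b,\beta_{1},\dots,\beta_{r},a)$ and $\tau^{a\to b}$ of the form $(a,\alpha_{1},\dots,\alpha_{s},b)$. From these I define $\tau_{1}:=(b,\beta_{1},\dots,\beta_{r})$ and $\tau_{2}:=(a,\alpha_{1},\dots,\alpha_{s})$; both are admissible words, $\tau_{1}$ starts with $b$, and the last symbol $\alpha_{s}$ of $\tau_{2}$ satisfies $A_{\alpha_{s}b}=1$.

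The key verification is that for every $\omega=(a,\omega_{2},\dots,\omega_{n})\in\Sigma_{a}^{\mathrm{per}}$ the concatenation $\tau_{1}\omega\tau_{2}$ lies in $\Sigma_{b}^{\mathrm{per}}$. Admissibility follows by checking the four junctions: within $\tau_{1}$ (automatic); the transition $\beta_{r}\to a$ (built into $\tau^{b\to a}$); within $\omega$ (automatic); the transition $\omega_{n}\to a$ (which holds because $\overline{\omega}\in\Sigma$, i.e.\ $A_{\omega_{n}a}=1$); within $\tau_{2}$ (automatic). Finally, the periodicity condition $A_{\alpha_{s}b}=1$ holds by construction, so $\overline{\tau_{1}\omega\tau_{2}}\in\Sigma$. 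This shows that $\Lambda:=\{\tau_{1},\tau_{2}\}$ witnesses finite embeddability of $\Sigma_{a}^{\mathrm{per}}$ into $\Sigma_{b}^{\mathrm{per}}$.

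Applying Lemma \ref{lem:pressure-finitelyembeddable} to the H\"older continuous data $(\varphi,\psi)$ now yields $\mathcal{P}_{\psi}(\varphi,\Sigma_{a}^{\mathrm{per}})\le\mathcal{P}_{\psi}(\varphi,\Sigma_{b}^{\mathrm{per}})$, and exchanging the roles of $a$ and $b$ gives the reverse inequality, which settles the claim. No step here looks genuinely hard; the only point requiring a little care is to make sure the bridging words are chosen so that both the internal admissibility of $\tau_{1}\omega\tau_{2}$ and its cyclic closure back to $b$ hold simultaneously, which is why it is convenient to peel off the terminal $a$ from $\tau^{b\to a}$ and to keep the terminal letter of $\tau^{a\to b}$ (the one compatible with $b$) while placing the initial $a$ of $\tau^{a\to b}$ directly after $\omega_{n}$.
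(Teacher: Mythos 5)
Your proof is correct and follows exactly the route the paper indicates: exhibit, using irreducibility, bridging words showing that $\Sigma_{a}^{\mathrm{per}}$ is finitely embeddable into $\Sigma_{b}^{\mathrm{per}}$, apply Lemma \ref{lem:pressure-finitelyembeddable} to get one inequality, and conclude by the symmetric roles of $a$ and $b$; you have simply spelled out the construction of $\Lambda=\{\tau_1,\tau_2\}$ that the paper leaves implicit. One small remark: your rationalization that Corollary \ref{cor:ExhaustGurevich} enters ``through finiteness'' is not really needed, since $\overline{\R}$ is totally ordered and the two inequalities from Lemma \ref{lem:pressure-finitelyembeddable} already give equality even if the pressures are $\pm\infty$.
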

As a consequence of Proposition \ref{fac:pressure} (\ref{enu:(Monotonicity)})
we have for the particular choices $\mathcal{C}=\Sigma_{a}^{\mathrm{per}}$,
$\mathcal{C}=\Sigma^{\mathrm{per}}$ and $\mathcal{C}=\Sigma^{*}$
the following relation. 
\begin{fact}
For $\varphi,\psi:\Sigma\rightarrow\R$ Hölder continuous, $\psi\ge0$,
we have \[
\mathcal{P}_{\psi}\left(\varphi,\Sigma_{a}^{\mathrm{per}}\right)\le\mathcal{P}_{\psi}\left(\varphi,\Sigma^{\mathrm{per}}\right)\le\mathcal{P}_{\psi}\left(\varphi,\Sigma^{*}\right).\]
 
\end{fact}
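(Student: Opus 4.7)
The plan is to derive the chain of inequalities as a direct application of the monotonicity statement in Proposition \ref{fac:pressure}(\ref{enu:(Monotonicity)}), with everything except the collection $\mathcal{C}$ held fixed. So the work reduces to verifying the two set inclusions
\[
\Sigma_{a}^{\mathrm{per}}\subset\Sigma^{\mathrm{per}}\subset\Sigma^{*}.
\]

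First I would unpack the definitions at the start of the introduction. By definition $\Sigma_{a}^{\mathrm{per}}=\{\omega\in\Sigma^{\mathrm{per}}:\omega_{1}=a\}$, so the left inclusion is immediate. For the right inclusion, $\Sigma^{\mathrm{per}}=\{\omega\in\Sigma^{*}:\overline{\omega}\in\Sigma\}$, which is visibly a subset of $\Sigma^{*}$. No further mixing or regularity assumption is needed here; the Hölder hypothesis plays no role in the argument itself (it is recorded only because it is the ambient setting in which the induced pressure is being studied).

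Now I would apply Proposition \ref{fac:pressure}(\ref{enu:(Monotonicity)}) twice, taking $\varphi_{1}=\varphi_{2}=\varphi$, $\psi_{1}=\psi_{2}=\psi$, $K_{1}=K_{2}=\Sigma$, and letting the subcollections vary along the above chain. This yields
\[
\mathcal{P}_{\psi}(\varphi,\Sigma_{a}^{\mathrm{per}})\le\mathcal{P}_{\psi}(\varphi,\Sigma^{\mathrm{per}})\le\mathcal{P}_{\psi}(\varphi,\Sigma^{*}),
\]
which is exactly the claim.

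There is essentially no obstacle: the statement is a bookkeeping consequence of monotonicity in $\mathcal{C}$, and Proposition \ref{fac:pressure}(\ref{enu:(Monotonicity)}) was already observed in its own proof to follow immediately from the definition of the $\psi$--induced pressure. The only minor point worth flagging is that one should make sure the defining sum in $\mathcal{P}_{\psi}(\varphi,\mathcal{C})$ is interpreted with the convention $\log 0=-\infty$, so that enlarging $\mathcal{C}$ genuinely enlarges (weakly) each partition sum even when the smaller collection has empty slices $\{\omega\in\mathcal{C}:T-\eta<S_{\omega}\psi\le T\}$; this is already implicit in the convention $\overline{\R}=\R\cup\{\pm\infty\}$ fixed in the definition of the induced pressure.
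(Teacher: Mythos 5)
Your proposal is correct and follows exactly the route the paper takes: the paper states this Fact precisely as a consequence of Proposition \ref{fac:pressure}(\ref{enu:(Monotonicity)}) applied to the inclusions $\Sigma_{a}^{\mathrm{per}}\subset\Sigma^{\mathrm{per}}\subset\Sigma^{*}$. Your extra remark about the Hölder hypothesis being inessential for this particular step and the $\log 0=-\infty$ convention is accurate but not needed.
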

For the following, we denote the \emph{irreducible component of $\Sigma$
containing $a$} by $\Sigma\left(a\right):=\left\{ \omega\in\Sigma:a\curvearrowright\omega_{1},\forall i\in\N\:\omega_{i}\curvearrowright a\right\} $,
where $a\curvearrowright b$ means, that there exists $\tau\in\Sigma^{*}$
with $a\tau b\text{\ensuremath{\in\Sigma}}^{*}$. Theorem \ref{thm:exhaustable-part}
provides us with a dichotomy for the Gurevi\v c pressure and the
classical pressure.
\begin{cor}
[Classicle Pressure--Gurevi{\v c} Pressure Dichotomy]\label{cor:urbgur-criterion}
Let $\varphi,\psi:\Sigma\rightarrow\R$ be Hölder continuous with
$\psi>0$. Then the following holds.
\begin{enumerate}
\item \textup{$\sup_{K\in C_{\Sigma,\sigma}}\mathcal{P}_{\psi,K}\left(\varphi,\Sigma\left(a\right)^{*}\right)=\mathcal{P}_{\psi}\left(\varphi,\Sigma_{a}^{\mathrm{per}}\right)$
for $a\in I$. }
\item $\sup_{K\in C_{\Sigma,\sigma}}\mathcal{P}_{\psi,K}\left(\varphi,\Sigma^{*}\right)=\sup_{a\in I}\mathcal{P}_{\psi}\left(\varphi,\Sigma_{a}^{\mathrm{per}}\right).$
\end{enumerate}
In particular, if $\Sigma$ is irreducible then we have for every
$a\in I$, \[
\sup_{K\in C_{\Sigma,\sigma}}\mathcal{P}_{\psi,K}\left(\varphi,\Sigma^{*}\right)=\mathcal{P}_{\psi}\left(\varphi,\Sigma_{a}^{\mathrm{per}}\right)\]
and the following equivalence holds: \[
\left(\psi,\varphi,\Sigma^{*}\right)\,\mbox{satisfies the exhausting principle }\iff\mathcal{P}_{\psi}\left(\varphi,\Sigma^{*}\right)=\mathcal{P}_{\psi}\left(\varphi,\Sigma_{a}^{\mathrm{per}}\right).\]
\end{cor}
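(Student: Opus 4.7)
The plan is to derive (1) directly from Theorem~\ref{thm:exhaustable-part}, then deduce (2) by decomposing each compact subsystem along the strongly connected components of its finite alphabet, and finally read off the ``in particular'' part as the specialisation $\Sigma(a)=\Sigma$ together with the definition of the exhausting principle.

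For (1), I would apply Theorem~\ref{thm:exhaustable-part} with $\mathcal{C}'=\Sigma_a^{\mathrm{per}}$ and $\mathcal{C}=\Sigma(a)^*$. First, $\Sigma_a^{\mathrm{per}}$ is representable by loops, since the periodicity condition already guarantees that the last letter of any element is admissible before $a$, so concatenation of two elements stays in $\Sigma_a^{\mathrm{per}}$, and refinement is analogous; the same observation shows that $\Sigma_a^{\mathrm{per}}$ is finitely irreducible with empty witness set $\Lambda=\emptyset$. The inclusion $\Sigma_a^{\mathrm{per}}\subset\Sigma(a)^*$ is immediate from the definition of $\Sigma(a)$, since the periodic extension of any $\omega\in\Sigma_a^{\mathrm{per}}$ forces each letter into $[a]$. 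The only non-trivial point is the compact finite embeddability of $\Sigma(a)^*$ into $\Sigma_a^{\mathrm{per}}$: for any compact $\sigma$-invariant $K$, the first and last letters of elements of $\Sigma(a)^*\cap K^*$ vary over the finite set $[a]\cap I_K$, so strong connectivity of $[a]$ in $I$ supplies, for each of the finitely many admissible pairs, bridges $\tau_1,\tau_2\in\Sigma^*$ with $\tau_1\omega\tau_2\in\Sigma_a^{\mathrm{per}}$, all drawn from a finite $\Lambda$.

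For (2), the direction $\ge$ is immediate: by monotonicity $\mathcal{P}_{\psi,K}(\varphi,\Sigma^*)\ge\mathcal{P}_{\psi,K}(\varphi,\Sigma(a)^*)$, so taking $\sup_K$ and invoking (1) gives $\sup_K\mathcal{P}_{\psi,K}(\varphi,\Sigma^*)\ge\mathcal{P}_\psi(\varphi,\Sigma_a^{\mathrm{per}})$ for every $a$. For $\le$, I would fix $K\in C_{\Sigma,\sigma}$ and, after replacing $K$ by the (still compact, $\sigma$-invariant) full SFT on its finite alphabet $I_K$, assume $K^*$ is the set of all admissible $I_K$-words. The incidence graph restricted to $I_K$ decomposes into finitely many strongly connected components $C_1,\ldots,C_r$, each contained in some $[a_k]$, and every $\omega\in\Sigma^*\cap K^*$ admits a well-defined \emph{type} $T(\omega)$ recording the monotone sequence of components visited; there are only finitely many types. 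By Proposition~\ref{fac:pressure}(\ref{enu:(Stability)}) it is enough to bound the type-wise pressure. Writing $\omega=\omega^{(1)}\cdots\omega^{(m)}$ for the decomposition into pieces in the successive $C_{i_k}$, a pigeonhole on the constraint $T<S_\omega\psi$ (some $\omega^{(k)}$ must carry at least a fraction $1/m$ of the $\psi$-mass) combined with Theorem~\ref{thm:pressure_as_inf} shows that the type-wise pressure is dominated by $\max_k\mathcal{P}_\psi(\varphi,C_{i_k}^*)$; since $C_{i_k}^*\subset\Sigma(a_{i_k})^*\cap K^*$, part (1) then bounds each term by $\mathcal{P}_\psi(\varphi,\Sigma_{a_{i_k}}^{\mathrm{per}})\le\sup_{a\in I}\mathcal{P}_\psi(\varphi,\Sigma_a^{\mathrm{per}})$.

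For the ``in particular'' statement, irreducibility of $\Sigma$ gives $\Sigma(a)=\Sigma$ for every $a\in I$, so (1) collapses to $\sup_K\mathcal{P}_{\psi,K}(\varphi,\Sigma^*)=\mathcal{P}_\psi(\varphi,\Sigma_a^{\mathrm{per}})$. The equivalence is then immediate from monotonicity together with the observation that the exhausting principle for $(\psi,\varphi,\Sigma^*)$ holds precisely when $\sup_K\mathcal{P}_{\psi,K}(\varphi,\Sigma^*)=\mathcal{P}_\psi(\varphi,\Sigma^*)$. The main obstacle is the type decomposition in (2); the delicate point is showing that the type-wise pressure behaves like a maximum and not a product, and it is here that the partition-function characterisation of Theorem~\ref{thm:pressure_as_inf}, the finiteness of the number of types, and the positivity of $\psi$ on the compact set $K$ (which controls the finitely many ``bridge'' contributions) combine to close the argument.
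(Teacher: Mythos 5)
Your proof is correct and follows essentially the same strategy as the paper's. For (1), the paper argues ``$\ge$'' from $\Sigma_{a}^{\mathrm{per}}\subset\Sigma(a)^{*}$ together with Corollary~\ref{cor:ExhaustGurevich}, and ``$\le$'' by showing $\Sigma(a)^{*}\cap\Sigma_{N}^{*}$ is finitely embeddable into $\Sigma_{a}^{\mathrm{per}}$ and applying Lemma~\ref{lem:pressure-finitelyembeddable}; your appeal to Theorem~\ref{thm:exhaustable-part} with $\mathcal{C}'=\Sigma_{a}^{\mathrm{per}}$, $\mathcal{C}=\Sigma(a)^{*}$ is a clean repackaging of exactly these two ingredients, and your verification that $\Sigma_{a}^{\mathrm{per}}$ is finitely irreducible with $\Lambda=\emptyset$ and representable by loops is correct. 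For (2), the paper merely asserts $\mathcal{P}_{\psi,K}(\varphi,\Sigma^{*})\le\max_{n\le N}\mathcal{P}_{\psi,\Sigma_{N}}(\varphi,\Sigma(n)^{*})$ ``by decomposition in the finitely many irreducible components'' and leaves the verification to the reader; your type decomposition together with the pigeonhole on the $\psi$-mass and the partition-function characterisation of Theorem~\ref{thm:pressure_as_inf} (plus Proposition~\ref{fac:pressure}(\ref{enu:(Stability)}) for the finitely many types) is precisely the argument one needs to make that step rigorous, and you correctly identify it as the delicate point -- in particular you are right that finiteness of $I_{K}$ and positivity of $\psi$ on $K$ are what make the ``small'' factors in the product contribute only a bounded constant. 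Two notational slips worth fixing: $[a]\cap I_{K}$ in your part (1) and ``$C_{i}$ contained in some $[a_{k}]$'' in part (2) use cylinder-set brackets where you clearly mean the irreducible component $\Sigma(a)$ (respectively $\Sigma(a_{k})$); also, it is worth stating explicitly that once a word leaves a strongly connected component of $I_{K}$ it cannot re-enter, so the visited-component sequence is repetition-free and respects the DAG order, which is what caps the number of types. The ``in particular'' paragraph is exactly the paper's reasoning.
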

\begin{proof}
ad (1): The {}``$\ge$''--part follows, since $\Sigma\left(a\right)^{*}\supset\Sigma_{a}^{\mathrm{per}}$
and $\left(\psi,\varphi,\Sigma_{a}^{\mathrm{per}}\right)$ satisfies
the exhausting principle by Corollary \ref{cor:ExhaustGurevich}.
For the {}``$\le$''--part recall that every $K\in C_{\Sigma,\sigma}$
is contained in a set $\Sigma_{N}$ for some $N\in\N$. By definition
of $\Sigma\left(a\right)$ we find for every element $\omega\in\Sigma\left(a\right)^{*}$
elements $\tau_{1},\tau_{2}\in\Sigma^{*}$ such that $\tau_{1}\omega\tau_{2}\in\Sigma_{a}^{\mathrm{per}}$.
For $\omega\in\Sigma\left(a\right)^{*}\cap\Sigma_{N}$ the elements
$\tau_{1},\tau_{2}$ can be chosen from a finite set. We conclude
that $\Sigma\left(a\right)^{*}\cap\Sigma_{N}$ is finitely embeddable
into $\Sigma_{a}^{\mathrm{per}}$ and hence the upper bound follows
by Lemma \ref{lem:pressure-finitelyembeddable}. 

ad (2): We only comment on the {}``$\le$''--part. Let $K\in C_{\Sigma,\sigma}$
with $K\subset\Sigma_{N}$ for some $N\in\N$. We have \[
\mathcal{P}_{\psi,K}\left(\varphi,\Sigma^{*}\right)\le\max_{n\in\left\{ 1,\dots,N\right\} }\mathcal{P}_{\psi,\Sigma_{N}}\left(\varphi,\Sigma\left(n\right)^{*}\right)\le\max_{n\in\left\{ 1,\dots,N\right\} }\mathcal{P}_{\psi,\Sigma_{N}}\left(\varphi,\Sigma_{n}^{\mathrm{per}}\right),\]
where the first inequality follows by decomposition in the finitely
many irreducible components and the second by an application of part
(1).
\end{proof}
Restricting to finitely irreducible systems we obtain 
\begin{cor}
Let the incidence matrix of $\Sigma$ be finitely irreducible and
let $\varphi,\psi:\Sigma\rightarrow\R$ be Hölder continuous, $\psi>0$.
Then we have\textup{\[
\mathcal{P}_{\psi}\left(\varphi,\Sigma_{a}^{\mathrm{per}}\right)=\mathcal{P}_{\psi}\left(\varphi,\Sigma^{\mathrm{per}}\right)=\mathcal{P}_{\psi}\left(\varphi,\Sigma^{*}\right)\]
and\[
\mathcal{P}_{\psi}\left(\varphi,\Sigma_{a}^{\mathrm{per}}\right)=\inf\left\{ \beta:\mathcal{P}_{1}\left(\varphi-\beta\psi,\Sigma^{*}\right)\le0\right\} =\sup\left\{ \beta:\mathcal{P}_{1}\left(\varphi-\beta\psi,\Sigma^{*}\right)\ge0\right\} .\]
}
\end{cor}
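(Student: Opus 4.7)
The plan is to combine the monotonicity of the induced pressure with a finite-embedding argument to collapse the three pressures, and then invoke the pseudo-inverse description already established in Corollary~\ref{cor:pressureformula_ifstreactlydescreasing}.

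From the inclusions $\Sigma_{a}^{\mathrm{per}}\subset\Sigma^{\mathrm{per}}\subset\Sigma^{*}$, Proposition~\ref{fac:pressure}~(\ref{enu:(Monotonicity)}) immediately yields
\[\mathcal{P}_{\psi}(\varphi,\Sigma_{a}^{\mathrm{per}})\le\mathcal{P}_{\psi}(\varphi,\Sigma^{\mathrm{per}})\le\mathcal{P}_{\psi}(\varphi,\Sigma^{*}).\]
For the reverse direction I would use that finite irreducibility of the incidence matrix exhibits $\Sigma^{*}$ as finitely embeddable into $\Sigma_{a}^{\mathrm{per}}$: if $\Lambda\subset\Sigma^{*}$ is the finite witnessing set, then for any $\omega\in\Sigma^{*}$ there exist $\tau_{1},\tau_{2}\in\Lambda$ with $a\tau_{1}\omega_{1}\in\Sigma^{*}$ and $\omega_{|\omega|}\tau_{2}a\in\Sigma^{*}$, and then $a\tau_{1}\omega\tau_{2}\in\Sigma_{a}^{\mathrm{per}}$ because its periodic extension closes admissibly at $a$. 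Lemma~\ref{lem:pressure-finitelyembeddable} then gives $\mathcal{P}_{\psi}(\varphi,\Sigma^{*})\le\mathcal{P}_{\psi}(\varphi,\Sigma_{a}^{\mathrm{per}})$, establishing the first line of equalities.

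For the second identity, observe that $\psi>0$ makes $\psi^{-1}(\{0\})$ empty and hence at most countable. Combined with H\"older continuity of $\varphi,\psi$ and finite irreducibility of the incidence matrix, the last sufficient condition listed in Corollary~\ref{cor:pressureformula_ifstreactlydescreasing} applies to $\mathcal{C}=\Sigma^{*}$, yielding
\[\mathcal{P}_{\psi}(\varphi,\Sigma^{*})=\inf\{\beta:\mathcal{P}_{1}(\varphi-\beta\psi,\Sigma^{*})\le0\}=\sup\{\beta:\mathcal{P}_{1}(\varphi-\beta\psi,\Sigma^{*})\ge0\},\]
which together with the first step concludes the proof. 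No step poses a genuine obstacle: the whole statement is a routine assembly of previously established tools, the only point requiring mild attention being the verification that the prepend/append construction actually lands in $\Sigma_{a}^{\mathrm{per}}$.
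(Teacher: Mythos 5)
Your proposal is correct, and it assembles the right ingredients, but it takes a somewhat more direct route than the paper's organisation suggests. The corollary is stated in the paper immediately after Corollary~\ref{cor:urbgur-criterion} under the cue ``Restricting to finitely irreducible systems we obtain,'' which indicates the intended chain: finite irreducibility of $A$ makes $\mathcal{C}=\Sigma^{*}$ finitely irreducible and representable by loops, so by Corollary~\ref{cor:exhausting-if-finitely-irred} the exhausting principle holds for $\left(\psi,\varphi,\Sigma^{*}\right)$, and then Corollary~\ref{cor:urbgur-criterion} converts this into $\mathcal{P}_{\psi}\left(\varphi,\Sigma^{*}\right)=\mathcal{P}_{\psi}\left(\varphi,\Sigma_{a}^{\mathrm{per}}\right)$; monotonicity supplies the middle term $\Sigma^{\mathrm{per}}$. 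You instead verify directly that finite irreducibility of $A$ makes $\Sigma^{*}$ finitely embeddable into $\Sigma_{a}^{\mathrm{per}}$ (prepend $a\tau_{1}$, append $\tau_{2}$, with $\tau_{1},\tau_{2}$ from the witnessing set $\Lambda$; the wrap-around $\left(\tau_{2}\right)_{|\tau_{2}|}\to a$ is admissible precisely because $\omega_{|\omega|}\tau_{2}a\in\Sigma^{*}$), and then invoke Lemma~\ref{lem:pressure-finitelyembeddable} together with the remark following it. This bypasses the exhausting-principle and loop-space machinery entirely and is, for this particular statement, the more elementary argument; the paper's route buys uniformity with the surrounding dichotomy results. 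Your treatment of the second display via the ``last sufficient condition'' of Corollary~\ref{cor:pressureformula_ifstreactlydescreasing} (using that $\psi>0$ makes $\psi^{-1}\left(\left\{ 0\right\} \right)$ empty) is exactly right and is what the paper intends there as well.
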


Next, we give an example where $\mathcal{P}_{\psi}\left(0,\Sigma_{1}^{\mathrm{per}}\right)<\mathcal{P}_{\psi}\left(0,\Sigma^{*}\right)$
whereas $\mathcal{P}_{\psi}\left(\varphi,\Sigma_{1}^{\mathrm{per}}\right)=\mathcal{P}_{\psi}\left(\varphi,\Sigma^{*}\right)$
for some Hölder continuous potentials $\psi,\varphi:\Sigma\rightarrow\R$
with $\psi>0$. Necessarily, the incidence matrix in this example
is not finitely irreducible. 
\begin{example}
\label{exa:renewalshift}We consider again the renewal shift $\Sigma$
and the potential $\psi:\Sigma\rightarrow\R$ satisfying $\psi_{|\left[n\right]}=\log\left(\frac{n+1}{n-1}\right)$
for $n\ge2$ and $\psi_{|\left[1\right]}=\log2$ from Example \ref{exa:lueroth-1}.
In Example \ref{exa:lueroth-1} we proved $\mathcal{P}_{\psi}\left(0,\Sigma_{1}^{\mathrm{per}}\right)=1$,
now we consider $\mathcal{P}_{\psi}\left(0,\Sigma^{*}\right)$. We
calculate that for $n,k\in\N$ with $1\le k<n$ and \foreignlanguage{ngerman}{$\tau\in\left[n,n-1,\dots,n-k+1\right]$}
\[
S_{k}\psi\left(\tau\right)=\log\left(\left(n\left(n+1\right)\right)/\left(\left(n-k+1\right)\left(n-k\right)\right)\right).\]
We conclude that for all $T>0$ and $\beta\in\R$ we have $\sum_{{\omega\in\Sigma^{*}\atop T<S_{\omega}\psi}}\e^{-\beta S_{\omega}\psi}=\infty$.
Hence, by Theorem \ref{thm:pressure_as_inf} we have $\mathcal{P}_{\psi}\left(0,\Sigma^{*}\right)=\infty$.
Since $\Sigma^{*}$ is compactly finitely embeddable into $\Sigma_{1}^{\mathrm{per}}$
we infer that $\left(\psi,0,\Sigma^{*}\right)$ does not satisfy the
exhausting principle by Theorem \ref{thm:exhaustable-part}.

Next, we introduce the potential $\varphi:\Sigma\rightarrow\R$ given
by $\varphi\left(\omega\right):=-\omega_{1}$. We verify that $\mathcal{P}_{\psi}\left(\varphi,\Sigma_{1}^{\mathrm{per}}\right)=\mathcal{P}_{\psi}\left(\varphi,\Sigma^{*}\right)$
and hence $\left(\psi,\varphi,\Sigma^{*}\right)$ satisfies the exhausting
principle. By decomposing the partition function corresponding to
$\Sigma^{*}$ in a product we obtain\begin{eqnarray*}
\sum_{\omega\in\Sigma^{*}}\e^{S_{\omega}\left(\varphi-\beta\psi\right)} & = & \left(1+\sum_{n\ge2}\e^{S_{\left(n,n-1,\dots,2\right)}\left(\varphi-\beta\psi\right)}\right)\sum_{\omega\in\Sigma_{1}^{\mathrm{per}}}\e^{S_{\omega}\left(\varphi-\beta\psi\right)}\\
 &  & \qquad\qquad\times\left(1+\sum_{N\geq2}\sum_{1\le k<N}\e^{S_{\left(1,N,N-1,\dots,N-k+1\right)}\left(\varphi-\beta\psi\right)}\right).\end{eqnarray*}
Using Theorem \ref{thm:pressure_as_inf} we conclude that $\mathcal{P}_{\psi}\left(\varphi,\Sigma^{*}\right)$
is the maximum of $\mathcal{P}_{\psi}\left(\varphi,\Sigma_{1}^{\mathrm{per}}\right)$,
$\mathcal{P}_{\psi}\left(\varphi,\left\{ \left(n,n-1,\ldots,1\right):n\ge2\right\} \right)$,
and \[
\mathcal{P}_{\psi}\left(\varphi,\left\{ \left(1,N,\ldots,N-k+1\right):N\ge2,\:1\le k<N\right\} \right).\]
 The claim then follows by showing that the latter two terms are equal
to $-\infty$. In fact, since $\exp\left(S_{\omega}\varphi\right)\le\exp\left(-\omega_{2}\right)$
we have, for every $\beta\in\R$, that \[
\sum_{n\ge2}\e^{S_{\left(n,n-1,\dots,1\right)}\left(\varphi-\beta\psi\right)}\le\sum_{n\ge2}\e^{-n+1}\left(n\left(n+1\right)\right)^{-\beta}<\infty\]
as well as \begin{eqnarray*}
\sum_{N\geq2}\sum_{1\le k<N}\!\!\!\!\e^{S_{\left(1,N,N-1,\dots,N-k+1\right)}\varphi-\beta\psi} & \le & \sum_{N\ge2}\e^{-N}\!\!\!\sum_{1\le k<N}\!\!\negthinspace\left(\frac{2N\left(N+1\right)}{\left(N-k+1\right)\left(N-k\right)}\right)^{-\beta}\\
 & < & \infty.\end{eqnarray*}
 
\end{example}

We finish this section by stating a variational principle generalising
results from \citep{MR2256622} to our situation. This result will
be crucial also for our results on the topological pressure for special
semi-flows in Section \ref{sec:Proof Special-semi-flows}.
\begin{prop}
[Variational Principle] \label{pro:variationalprinciple-1-1}Let
the incidence matrix of $\Sigma$ be irreducible and let $\varphi,\psi:\Sigma\rightarrow\R$
be Hölder continuous with $\psi>0$. Then we have for every $a\in I$\begin{eqnarray*}
\mathcal{P}_{\psi}\left(\varphi,\Sigma_{a}^{\mathrm{per}}\right) & = & \sup\left\{ \frac{h_{\nu}\left(\sigma\right)}{\int\psi d\nu}+\frac{\int\varphi d\nu}{\int\psi d\nu}:\nu\in\mathcal{M}_{\sigma}^{1}\:\mbox{with }\varphi,\psi\in L^{1}\left(\nu\right)\right\} \\
 & = & \sup\left\{ \frac{h_{\nu}\left(\sigma\right)}{\int\psi d\nu}+\frac{\int\varphi d\nu}{\int\psi d\nu}:\nu\in\mathcal{E}_{\sigma,c}^{1}\right\} ,\end{eqnarray*}
where $\mathcal{M}_{\sigma}^{1}$ denotes the set of $\sigma$-invariant
probability measures on $\Sigma$, and $\mathcal{E}_{\sigma,c}^{1}\subset\mathcal{M}_{\sigma}^{1}$
the subset of ergodic probability measures with compact support. \end{prop}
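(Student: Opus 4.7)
The plan is to exploit the two conclusions of Corollary~\ref{cor:ExhaustGurevich}: that $(\psi,\varphi,\Sigma_a^{\mathrm{per}})$ satisfies the exhausting principle, and that $P:=\mathcal{P}_\psi(\varphi,\Sigma_a^{\mathrm{per}})$ is the unique threshold at which $\beta\mapsto\mathcal{P}_1(\varphi-\beta\psi,\Sigma_a^{\mathrm{per}})$ crosses zero. Denoting by $S_1,S_2$ the two suprema in the statement, we have $S_2\le S_1$ trivially, so it suffices to establish $S_1\le P$ and $P\le S_2$.

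For $S_1\le P$, I fix any $\nu\in\mathcal{M}_\sigma^1$ with $\varphi,\psi\in L^1(\nu)$ and any $\epsilon>0$. By Corollary~\ref{cor:ExhaustGurevich} we have $\mathcal{P}_1(\varphi-(P+\epsilon)\psi,\Sigma_a^{\mathrm{per}})\le 0$. Invoking the classical variational inequality for the Gurevi\v c pressure on irreducible countable-state Markov shifts (cf.~Sarig~\citep{MR1738951,MR1818392}) yields $h_\nu(\sigma)+\int(\varphi-(P+\epsilon)\psi)\,d\nu\le 0$, and since $\psi>0$ forces $\int\psi\,d\nu>0$, rearranging and letting $\epsilon\downarrow 0$ gives $\bigl(h_\nu(\sigma)+\int\varphi\,d\nu\bigr)/\int\psi\,d\nu\le P$.

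For $P\le S_2$, the exhausting principle provides compact $\sigma$-invariant sets $K_n\subset\Sigma$ with $\beta_n:=\mathcal{P}_{\psi,K_n}(\varphi,\Sigma_a^{\mathrm{per}})\to P$. By passing if necessary to an irreducible subshift of finite type inside $K_n$ containing a periodic orbit through $a$ (which does not affect the limit, by monotonicity together with Lemma~\ref{lem:pressure-finitelyembeddable}), one may assume $\psi|_{K_n}$ is bounded away from zero. Remark~\ref{rem:psipressure_finitealphabet} then gives $\mathcal{P}_{1,K_n}(\varphi-\beta_n\psi,\Sigma_a^{\mathrm{per}})=0$, and on such a compact irreducible $K_n$ this quantity agrees with the classical topological pressure. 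Walters' variational principle on $K_n$ therefore produces ergodic measures $\nu_n\in\mathcal{E}_{\sigma,c}^1$ supported in $K_n$ whose ratios $\bigl(h_{\nu_n}(\sigma)+\int\varphi\,d\nu_n\bigr)/\int\psi\,d\nu_n$ approach $\beta_n$, and hence converge to $P$ as $n\to\infty$.

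The main obstacle will be rigorously justifying the Gurevi\v c variational inequality in the upper-bound step, since the cleanest references treat the mixing BIP case, while we work in the purely irreducible setting. Should a direct citation prove insufficient, I would instead bootstrap from the exhausting principle itself: approximate $\nu$ by its normalised restrictions to a compact finite-alphabet exhaustion, apply Walters' variational principle on each truncation, and pass to the limit using upper semicontinuity of entropy with respect to finite partitions. This reduces the non-compact upper bound to a routine limiting argument and, together with the compactly supported lower bound above, completes the proof.
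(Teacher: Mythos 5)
Your proposal takes essentially the same route as the paper: the upper bound is obtained by combining the pseudo-inverse characterisation $\mathcal{P}_{1}(\varphi-\beta\psi,\Sigma_a^{\mathrm{per}})\le 0$ for $\beta>P$ with Sarig's variational inequality for the Gurevi\v c pressure (the paper also simply asserts that \citep[Theorem 3]{MR1738951} remains valid in the merely irreducible setting), and the lower bound uses the exhausting principle from Corollary~\ref{cor:ExhaustGurevich} together with Remark~\ref{rem:psipressure_finitealphabet} and Walters' variational principle on the compact truncations. One small superfluity: you invoke a passage to an irreducible subshift of $K_n$ in order to guarantee $\psi|_{K_n}$ is bounded away from zero, but boundedness away from zero is automatic from continuity of $\psi>0$ on the compact $K_n$; the paper simply applies Remark~\ref{rem:psipressure_finitealphabet} directly. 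Your concluding remark about the possible need to bootstrap the non-compact upper bound via finite-alphabet truncations is a fair caveat that the paper glosses over, but it is not needed in the end if one accepts (as the authors do) that Sarig's argument extends to irreducible shifts.
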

\begin{proof}
First we show \[
\mathcal{P}_{\psi}\left(\varphi,\Sigma_{a}^{\mathrm{per}}\right)\geq\sup\left\{ \frac{h_{\nu}\left(\sigma\right)}{\int\psi d\nu}+\frac{\int\varphi d\nu}{\int\psi d\nu}:\nu\in\mathcal{M}_{\sigma}^{1}\:\mbox{with }\varphi,\psi\in L^{1}\left(\nu\right)\right\} .\]
By Corollary \ref{cor:pressure_rescaling} we have $0\ge\mathcal{P}_{1}\left(\varphi-\beta\psi,\Sigma_{a}^{\mathrm{per}}\right)$
for $\beta>\mathcal{P}_{\psi}\left(\varphi,\Sigma_{a}^{\mathrm{per}}\right)$.
Using results from \citep[Theorem 3]{MR1738951} for the Gurevi\v c
pressure, which turn out to be valid also for irreducible incidence
matrices, we obtain \begin{eqnarray*}
0 & \ge & \mathcal{P}_{1}\left(\varphi-\beta\psi,\Sigma_{a}^{\mathrm{per}}\right)\\
 & \ge & \sup\left\{ h_{\nu}\left(\sigma\right)+\int\varphi d\nu-\beta\int\psi d\nu:\nu\in\mathcal{M}_{\sigma}^{1}\:\mbox{with }\varphi,\psi\in L^{1}\left(\nu\right)\right\} \\
 & = & \sup\left\{ \int\psi d\nu\left(\frac{h_{\nu}\left(\sigma\right)}{\int\psi d\nu}+\frac{\int\varphi d\nu}{\int\psi d\nu}-\beta\right):\nu\in\mathcal{M}_{\sigma}^{1}\:\mbox{with }\varphi,\psi\in L^{1}\left(\nu\right)\right\} ,\end{eqnarray*}
and hence, $\mathcal{P}_{\psi}\left(\varphi,\Sigma_{a}^{\mathrm{per}}\right)\ge\sup\left\{ \frac{h_{\nu}\left(\sigma\right)}{\int\psi d\nu}+\frac{\int\varphi d\nu}{\int\psi d\nu}:\nu\in\mathcal{M}_{\sigma}^{1}\:\mbox{with }\varphi,\psi\in L^{1}\left(\nu\right)\right\} $. 

Next, we are going to prove that $\mathcal{P}_{\psi}\left(\varphi,\Sigma_{a}^{\mathrm{per}}\right)\leq\sup\left\{ \frac{h_{\nu}\left(\sigma\right)}{\int\psi d\nu}+\frac{\int\varphi d\nu}{\int\psi d\nu}:\nu\in\mathcal{E}_{\sigma,c}^{1}\right\} $.
Since by Corollary \ref{cor:ExhaustGurevich}, $\left(\psi,\varphi,\Sigma_{a}^{\mathrm{per}}\right)$
satisfies the exhausting principle we may restrict our attention to
the finite alphabet case. More precisely, we find a sequence of compact
$\sigma$-invariant subsets $K_{n}\subset\Sigma$, such that $\lim_{n\rightarrow\infty}\mathcal{P}_{\psi,K_{n}}\left(\varphi,\Sigma_{a}^{\mathrm{per}}\right)=\mathcal{P}_{\psi}\left(\varphi,\Sigma_{a}^{\mathrm{per}}\right)$.
For the finite alphabet case we have by Remark \ref{rem:psipressure_finitealphabet}
and by the classical variational principle (cf.\ \citep{MR648108,MR0457675})
\begin{eqnarray*}
0 & = & \mathcal{P}_{1,K_{n}}\left(\varphi-\mathcal{P}_{\psi,K_{n}}\left(\varphi,\Sigma_{a}^{\mathrm{per}}\right)\psi,\Sigma_{a}^{\mathrm{per}}\right)\\
 & = & \sup\left\{ h_{\mu}+\int\varphi d\mu-\mathcal{P}_{\psi,K_{n}}\left(\varphi,\Sigma_{a}^{\mathrm{per}}\right)\int\psi d\mu:\mu\in\mathcal{E}_{\sigma,c}^{1},\mu\left(K_{n}\right)=1\right\} \\
 & = & \sup\left\{ \frac{h_{\mu}+\int\varphi d\mu}{\int\psi d\mu}:\mu\in\mathcal{E}_{\sigma,c}^{1},\mu\left(K_{n}\right)=1\right\} -\mathcal{P}_{\psi,K_{n}}\left(\varphi,\Sigma_{a}^{\mathrm{per}}\right).\end{eqnarray*}
The fact that $\sup\left\{ \frac{h_{\nu}\left(\sigma\right)}{\int\psi d\nu}+\frac{\int\varphi d\nu}{\int\psi d\nu}:\nu\in\mathcal{M}_{\sigma}^{1}\:\mbox{with }\varphi,\psi\in L^{1}\left(\nu\right)\right\} $
is greater or equal to $\sup\left\{ \frac{h_{\nu}\left(\sigma\right)}{\int\psi d\nu}+\frac{\int\varphi d\nu}{\int\psi d\nu}:\nu\in\mathcal{E}_{\sigma,c}^{1}\right\} $
then finishes the proof.
\end{proof}

\section{Proof of Theorem \ref{thm:taupressure_is_entropy} \label{sec:Proof Special-semi-flows}}

Let $\Phi=\left(\varphi_{t}\right)_{t\in\R_{>0}}$ be the special
semi-flow over \[
Y:=\left\{ \left(\omega,t\right)\in\Sigma\times\R^{+}:0\leq t\leq\tau(\omega)\right\} \slash\sim\]
with height function $\tau:\Sigma\rightarrow\R_{>0}$ defined by $\varphi_{t}(\omega,s)=(\omega,s+t)$. 

As a key lemma for the proof of Theorem \ref{thm:taupressure_is_entropy}
we show that $\mathbf{P}\left(g\mid\Phi\right)$ satisfies a certain
variational principle, where the supremum is taken over the set $\mathcal{E}_{\sigma}^{1}\left(\tau\right)$
of ergodic $\sigma$-invariant probability measures $\mu$ on the
base space satisfying $\int\tau d\mu<\infty$. Note, that this Lemma
generalises \citep[Lemma 1]{MR1627271} to our situation.
\begin{lem}
\label{lem:entropylemma}If $\tau:\Sigma\rightarrow\R_{>0}$ is Hölder
continuous satisfying $\sum_{i=0}^{\infty}\tau\circ\sigma^{i}=\infty$
and $g:Y\rightarrow\R$ has the property that $\Delta_{g}:\Sigma\rightarrow\R$
is Hölder continuous then we have\[
\mathbf{P}\left(g\mid\Phi\right)=\sup\left\{ \frac{h\left(\sigma,\mu\right)}{\int\tau d\mu}+\frac{\int\Delta_{g}d\mu}{\int\tau d\mu}:\mu\in\mathcal{E}_{\sigma}^{1}\left(\tau\right)\:\mbox{with }\Delta_{g}\in L^{1}\left(\mu\right)\right\} .\]
\end{lem}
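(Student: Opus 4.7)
The plan is to set up an Abramov-type bijection between $\mathcal{E}_\sigma^1(\tau)$ and $\mathcal{E}_\Phi^1$ under the hypothesis $\sum_{i\geq 0}\tau\circ\sigma^i=\infty$, and to push the two quantities through this bijection. For the ``$\geq$''-direction, fix $\mu\in\mathcal{E}_\sigma^1(\tau)$ with $\Delta_g\in L^1(\mu)$ and form $\nu_\mu:=(\mu\otimes\lambda)\big|_Y\big/\int\tau\, d\mu$. Standard Abramov theory gives $\nu_\mu\in\mathcal{E}_\Phi^1$ together with the entropy identity $h(\varphi_1,\nu_\mu)=h(\sigma,\mu)/\int\tau\,d\mu$, while a direct Fubini computation yields $g\in L^1(\nu_\mu)$ and $\int g\,d\nu_\mu=\int\Delta_g\,d\mu/\int\tau\,d\mu$. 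Summing these two identities shows that the candidate supremum is dominated by $\mathbf{P}(g\mid\Phi)$.

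For the ``$\leq$''-direction we must show that every $\nu\in\mathcal{E}_\Phi^1$ with $g\in L^1(\nu)$ arises as $\nu_\mu$ for some $\mu\in\mathcal{E}_\sigma^1(\tau)$. Here the inducing technique indicated in the paragraph following Theorem~\ref{thm:taupressure_is_entropy} enters. Pick $a\in I$ with $\nu(Y_a)>0$, where $Y_a:=\{(\omega,t)\in Y:\omega_1=a\}$, and induce $\Phi$ on the transversal $\{(\omega,0)\in Y:\omega_1=a\}$. On the base side this amounts to replacing $\sigma$ by its first-return map $\sigma_a$ on $[a]$ with return time $R$, and $\tau$ by $\tau_a:=S_R\tau$. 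Because $\tau_a\ge\inf_{[a]}\tau>0$ and is H\"older continuous by the bounded distortion property \eqref{eq:boundeddistortion-lemma}, the classical Abramov correspondence applies to the induced system and produces an ergodic $\sigma_a$-invariant probability $\mu_a$ on $[a]$. Kac's formula $\mu:=\bigl(\int R\,d\mu_a\bigr)^{-1}\sum_{n\ge 0}(\sigma^n)_*\bigl(\mu_a|_{\{R>n\}}\bigr)$ then yields a $\sigma$-invariant probability measure on $\Sigma$ with $\int\tau\,d\mu=\int\tau_a\,d\mu_a/\int R\,d\mu_a<\infty$, and by construction $\nu_\mu=\nu$. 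The two desired identities $h(\varphi_1,\nu)\int\tau\,d\mu=h(\sigma,\mu)$ and $\int g\,d\nu\int\tau\,d\mu=\int\Delta_g\,d\mu$ are then obtained by composing Abramov's formula with Kac's formula across the inducing step.

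The main obstacle is the escape-of-mass issue hidden in the choice of $a$. If one had $\nu(Y_a)=0$ for every $a\in I$, then $\Phi$-invariance would force $\nu$ to be supported on orbits whose base sequence visits each cylinder only on a set of measure zero; ergodicity together with $\sum_i\tau\circ\sigma^i=\infty$ rules this out, since such an orbit would consume infinite flow-time between any two visits to a fixed cylinder and could not be normalised to a probability measure. Making this precise, and verifying that the measure $\mu$ produced by the Kac construction is ergodic with $\Delta_g\in L^1(\mu)$ whenever $g\in L^1(\nu)$, is the technical core of the proof; after that the Abramov and Kac bookkeeping close the argument.
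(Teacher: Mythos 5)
There is a genuine gap in your proposal: you treat the correspondence $\mu\leftrightarrow\nu_\mu$ as a bijection between $\mathcal{E}_\sigma^1(\tau)$ and $\mathcal{E}_\Phi^1$, but this is precisely what fails when $\tau$ is not bounded away from zero, and handling that failure is the whole technical content of the lemma. The Abramov/Kac correspondence in fact pairs $\Phi$--invariant ergodic probability measures on $Y$ with $\sigma$--finite $\sigma$--invariant ergodic measures $\mu$ on $\Sigma$ satisfying $\int\tau\,d\mu<\infty$ --- not with probability measures. In your Kac construction, the total mass of $\mu$ is $\int R\,d\mu_a$, where $R$ is the discrete first-return time; the hypothesis $\nu(Y)=1$ only guarantees that the \emph{flow}-return time $\tau_a=S_R\tau$ is $\mu_a$--integrable, and when $\tau$ is not bounded below this does not imply $\int R\,d\mu_a<\infty$. (Your claim ``$\tau_a\ge\inf_{[a]}\tau>0$'' also fails in general: on a countable alphabet, $\tau$ H\"older and positive does not give $\inf_{[a]}\tau>0$.) So the sentence ``yields a $\sigma$-invariant probability measure on $\Sigma$'' is unjustified, and the ``$\leq$''-direction is not established. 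The ``escape-of-mass'' issue you flag (some $a$ with $\nu(Y_a)>0$) is not the real obstacle --- that is automatic since cylinders partition $Y$.

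The paper's proof is explicit about this (``certain marginal measures on the base space [may] be infinite'', and the closing remark notes the bijection with $\mathcal{M}_\sigma^1$ only when $\tau$ is bounded below). It first writes $\mathbf{P}(g\mid\Phi)$ as a supremum over the larger class $\mathcal{E}_\sigma(\tau)$ of $\sigma$--finite ergodic measures, using Hopf's conservativity and the Krengel/Abramov entropy formula. The genuine work is then an approximation argument from the inside: after normalizing $\mu$ so that $\mu([a])=1$ for a suitable cylinder $[a]$, it passes to the induced full shift on $[a]$, truncates to finitely many induced symbols of bounded block length to build finite compactly supported invariant measures $\mu_{n,k}$, and shows via the bounded distortion property and Kac's formula that $h(\sigma,\mu_{n,k})$, $\int\tau\,d\mu_{n,k}$, $\int\Delta_g\,d\mu_{n,k}$ converge to the corresponding quantities for $\mu$. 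Only these truncated measures are honest probability measures, and it is their existence that lets one replace $\mathcal{E}_\sigma(\tau)$ by $\mathcal{E}_\sigma^1(\tau)$. Your write-up skips this step entirely, so as it stands the proposal does not prove the lemma.
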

\begin{proof}
This proof follows mainly the proof of the analogue statement in \citep{MR1627271}.
Let us denote the set of $\sigma$-invariant ergodic measures on $\Sigma$
satisfying $\int\tau\,\mathrm{d}\mu<\infty$ by $\mathcal{E}_{\sigma}\left(\tau\right)$.
Note, that there is a one-to-one correspondence between the $\Phi$-invariant
ergodic probability measures on $Y$ and the normalised product measures
$\overline{\mu\times\lambda}:=\left(\mu\times\lambda\right)\big|_{Y}/\int\tau\,\mathrm{d}\mu$
with $\mu\in\mathcal{E}_{\sigma}\left(\tau\right)$ and $\lambda$
denoting the Lebesgue measures on the fibers. By definition of \foreignlanguage{ngerman}{$\mathbf{P}\left(g\mid\Phi\right)$}
and the fact that $\int g\, d\left(\mu\times\lambda\right)\big|_{Y}=\int\Delta_{g}d\mu$
we have\[
\mathbf{P}\left(g\mid\Phi\right)=\sup\left\{ h\left(\varphi_{1},\overline{\mu\times\lambda}\right)+\frac{\int\Delta_{g}d\mu}{\int\tau\,\mathrm{d}\mu}:\mu\in\mathcal{E}_{\sigma}\left(\tau\right)\:\mbox{such that }\Delta_{g}\in L^{1}\left(\mu\right)\right\} .\]
 Since $\tau>0$ and $\sum_{i=0}^{\infty}\tau\circ\sigma^{i}=\infty$
we have by a result of Hopf \citep{0017.28301} that all elements
of $\mathcal{E}_{\sigma}\left(\tau\right)$ are conservative. If $\mu$
is $\sigma$-finite ergodic invariant and conservative (not necessarily
a probability) then $h\left(\sigma,\mu\right)$ will subsequently
be understood as in \citep{MR0218522}. We then notice that\foreignlanguage{ngerman}{
$h\left(\varphi_{1},\overline{\mu\times\lambda}\right)$} is equal
to $h\left(\sigma,\mu\right)/\int\tau d\mu$ for all $\mu\in\mathcal{E}_{\sigma}\left(\tau\right)$.
This follows e.g.\ from \citep[Theorem 1]{MR1627271} applied to
the natural extension of $\left(Y,\varphi_{1}\right)$ and using the
results on the relative metric entropy by Ledrappier and Walters \citep{MR0476995}.
We conclude that \[
\mathbf{P}\left(g\mid\Phi\right)=\sup\left\{ \frac{h\left(\sigma,\mu\right)}{\int\tau d\mu}+\frac{\int\Delta_{g}d\mu}{\int\tau\,\mathrm{d}\mu}:\mu\in\mathcal{E}_{\sigma}\left(\tau\right)\:\mbox{such that }\Delta_{g}\in L^{1}\left(\mu\right)\right\} .\]
 Let us now introduce the induced transformation on $E$ by setting
\[
\sigma_{E}^{*}\left(x\right):=\sigma^{N\left(x\right)}\left(x\right),x\in E,\]
with $N\left(x\right):=\inf\left\{ n\ge1:\sigma^{n}\left(x\right)\in E\right\} $
denoting the first return time to the set $E$. As shown in \citep[3.1]{MR0218522},
every Borel set $E$ with $0<\mu\left(E\right)<\infty$ is a sweep-out
set (i.e. $\mu\left(\Sigma\setminus\bigcup_{n\in\N}\sigma^{-n}\left(E\right)\right)=0$)
and hence we have $h\left(\sigma,\mu\right)=h\left(\sigma_{E}^{*},\mu_{|E}\right)$. 

Let us first assume that $\mathbf{P}\left(g\mid\Phi\right)<\infty$.
Then for $\epsilon>0$ we find $\mu\in\mathcal{E}_{\sigma}\left(\tau\right)$
 such that \[
\mathbf{P}\left(g\mid\Phi\right)-\epsilon\le\frac{h\left(\sigma,\mu\right)}{\int\tau d\mu}+\frac{\int\Delta_{g}d\mu}{\int\tau\,\mathrm{d}\mu}\leq\mathbf{P}\left(g\mid\Phi\right)<\infty.\]
Since $\int\tau d\mu<\infty$ and $\tau>0$ we find $a\in\Sigma^{*}$
such that $\left[a\right]$ hase finite and positive measure with
respect to $\mu$. We may normalise $\mu$ such that $\mu_{|\left[a\right]}=1$
without changing $\frac{h\left(\sigma,\mu\right)}{\int\tau d\mu}+\frac{\int\Delta_{g}d\mu}{\int\tau\,\mathrm{d}\mu}$.
In this situation we then have $h\left(\sigma,\mu\right)=h\left(\sigma_{\left[a\right]}^{*},\mu_{|\left[a\right]}\right)$. 

Our aim is to construct a sequence of finite invariant ergodic measures
$\left(\mu_{n,k}\right)_{n,k\in\N}$ on $\Sigma$, such that for $n,k$
large we have $h\left(\sigma,\mu_{n,k}\right)\ge h\left(\sigma,\mu\right)-\epsilon$
as well as \[
\left|\int\Delta_{g}d\mu_{n,k}-\int\Delta_{g}\,\mathrm{d}\mu\right|<\epsilon\;\mbox{ and }\:\left|\int\tau d\mu_{n,k}-\int\tau\,\mathrm{d}\mu\right|<\epsilon,\]
which then would finish the proof.

For this we define the countable set of words \[
C^{*}:=\left\{ \omega\in\Sigma^{*}:\omega a\in\Sigma^{*},\left[\omega a\right]\subset\left[a\right],N\big|_{\left[\omega a\right]}=\left|\omega\right|\right\} =\left\{ \gamma_{i}:i\in\N\right\} \]
and let $C_{n}^{*}:=\left\{ \gamma_{i}\in C^{*}:i\le n\right\} $
denote the set of its first $n\in\N$ elements. We have that $\left(\left[a\right],\sigma_{\left[a\right]}^{*}\right)$
is conjugated to the fullshift over the alphabet $C^{*}$, which we
denote by $\left(\Sigma_{C^{*}},\sigma^{*}\right)$. By the definition
of the measure theoretical entropy we have for every $k\in\N$ and
all $n\in\N$ sufficiently large \[
h\left(\sigma_{\left[a\right]}^{*},\mu_{|\left[a\right]}\right)\le-\frac{1}{k}\sum_{\omega\in\left(C^{*}\right)^{k}}\mu\left(\left[\omega\right]\right)\log\mu\left(\left[\omega\right]\right)\le-\frac{1}{k}\sum_{\omega\in\left(C_{n}^{*}\right)^{k}}\mu\left(\left[\omega\right]\right)\log\mu\left(\left[\omega\right]\right)+\epsilon.\]

Let $q_{n,k}:=\mu\left(\bigcup_{\omega\in\left(C_{n}^{*}\right)^{k}}\left[\omega\right]\right)$,
which is finite for fixed $k\geq\left|a\right|$ and every $n\in\mathbb{N}$,
and tends to $1$ for $n\to\infty$. For $k\geq\left|a\right|$ and
$n\in\mathbb{N}$ we define the Bernoulli measures $\mu_{n,k}^{*}$
on $\Sigma_{C^{*}}$ by requiring that $\mu_{n,k}^{*}\left(\left[\omega\right]\right):=\mu\left(\left[\omega\right]\right)/q_{n,k}$
for $\omega\in\left(C_{n}^{*}\right)^{k}$, and zero else. Its push
forward to $\limsup_{n}\sigma^{-n}\left(\left[a\right]\right)$ will
also be denoted by $\mu_{n,k}^{*}$. Hence, we obtain for the measure
theoretical entropy of the induced system \begin{eqnarray*}
h\left(\sigma_{\left[a\right]}^{*},\mu_{n,k}^{*}\right) & = & -\frac{1}{k}\sum_{\omega\in\left(C_{n}^{*}\right)^{k}}\mu_{n,k}^{*}\left(\left[\omega\right]\right)\log\mu_{n,k}^{*}\left(\left[\omega\right]\right)\\
 & = & -\frac{1}{k}\sum_{\omega\in\left(C_{n}^{*}\right)^{k}}\mu\left(\left[\omega\right]\right)\log\mu\left(\left[\omega\right]\right)-\log q_{n,k},\end{eqnarray*}
which tends to $-\frac{1}{k}\sum_{\omega\in\left(C^{*}\right)^{k}}\mu\left(\left[\omega\right]\right)\log\mu\left(\left[\omega\right]\right)$
for $n\rightarrow\infty$. Combining this with the above estimate
we have for all $k,n\in\N$ large enough that\[
h\left(\sigma_{\left[a\right]}^{*},\mu_{n,k}^{*}\right)\ge h\left(\sigma_{\left[a\right]}^{*},\mu_{|\left[a\right]}\right)-\epsilon.\]
We define for $n,k\in\N$ the $\sigma$-invariant and ergodic measure
$\mu_{n,k}$ on $\left(\Sigma,\mathcal{B}\right)$ by \[
\mu_{n,k}\left(E\right):=\int\sum_{i=0}^{N\left(x\right)-1}\1_{E}\circ\sigma^{i}\left(x\right)\,\mathrm{d}\mu_{n,k}^{*}\left(x\right),\: E\in\mathcal{B}\left(\Sigma\right),\]
which by Kac's formula \citep{MR0022323} has finite total mass \[
\mu_{n,k}\left(\Sigma\right)=\int N\,\mathrm{d}\mu_{n,k}^{*}=\sum_{\omega\in\left(C_{n}^{*}\right)^{k}}\left|\omega_{1}\right|\mu\left(\left[\omega\right]\right)<\infty.\]
The above estimates imply that for $k$ and $n$ sufficiently large
\[
h\left(\sigma,\mu_{n,k}\right)=h\left(\sigma_{\left[a\right]}^{*},\mu_{n,k}^{*}\right)\ge h\left(\sigma_{\left[a\right]}^{*},\mu_{|\left[a\right]}\right)-\epsilon=h\left(\sigma,\mu\right)-\epsilon.\]
We also introduce the induced potentials $\tau^{*},\Delta_{g}^{*}$
defined for $x\in\limsup_{n}\sigma^{-n}\left(\left[a\right]\right)$
by\[
\tau^{*}\left(x\right):=\sum_{i=0}^{N\left(x\right)-1}\tau\left(\sigma^{i}\left(x\right)\right)\;\mbox{ and }\:\Delta_{g}^{*}\left(x\right):=\sum_{i=0}^{N\left(x\right)-1}\Delta_{g}\left(\sigma^{i}\left(x\right)\right).\]
Recall that $\tau^{*}$ and $\Delta_{g}^{*}$ are defined $\mu$-a.e.
on $\left[a\right]$. Since $\tau$ and $\Delta_{g}$ are Hölder continuous
it follows by the remark at the beginning of Section \ref{sub:Construction-of-loop-spaces}
that $\tau^{*}$ and $\Delta_{g}^{*}$ are Hölder continuous as functions
on $\left(C^{*}\right)^{\N}$. Hence we can choose $k$ sufficiently
large, such that \[
\sup\left\{ \max\left\{ \left|\tau^{*}\left(x\right)-\tau^{*}\left(y\right)\right|,\left|\Delta_{g}^{*}\left(x\right)-\Delta_{g}^{*}\left(y\right)\right|\right\} :x,y\in\left[\omega\right],\omega\in\left(C^{*}\right)^{k}\right\} <\epsilon/2.\]
 Consequently, we have \[
\left|\int\tau^{*}\,\mathrm{d}\mu_{n,k}^{*}-\int\tau^{*}\1_{\bigcup_{\omega\in\left(C_{n}^{*}\right)^{k}}\left[\omega\right]}\,\mathrm{d}\mu_{|\left[a\right]}\right|<\epsilon/2\]
 and \[
\left|\int\Delta_{g}^{*}\,\mathrm{d}\mu_{n,k}^{*}-\int\Delta_{g}^{*}\1_{\bigcup_{\omega\in\left(C_{n}^{*}\right)^{k}}\left[\omega\right]}\,\mathrm{d}\mu_{|\left[a\right]}\right|<\epsilon/2.\]
By Kac's formula we have $\int\tau\,\mathrm{d}\mu=\int\tau^{*}\,\mathrm{d}\mu_{|\left[a\right]}$
and $\int\Delta_{g}\tau\,\mathrm{d}\mu=\int\Delta_{g}^{*}\,\mathrm{d}\mu_{|\left[a\right]}$.
The latter implies that $\tau^{*}$ and $\Delta_{g}^{*}$ are $\mu_{|\left[a\right]}$-integrable.
Since $q_{n,k}$ tends to $1$ for $n\rightarrow\infty$, we have
for large $n$ \[
\left|\int\tau^{*}\1_{\bigcup_{\omega\in\left(C_{n}^{*}\right)^{k}}\left[\omega\right]}\,\mathrm{d}\mu_{|\left[a\right]}-\int\tau^{*}\,\mathrm{d}\mu_{|\left[a\right]}\right|<\epsilon/2\]
and \[
\left|\int\Delta_{g}^{*}\1_{\bigcup_{\omega\in\left(C_{n}^{*}\right)^{k}}\left[\omega\right]}\,\mathrm{d}\mu_{|\left[a\right]}-\int\Delta_{g}^{*}\,\mathrm{d}\mu_{|\left[a\right]}\right|<\epsilon/2.\]
Again by Kac's formula we have$\int\tau\,\mathrm{d}\mu_{n,k}=\int\tau^{*}\,\mathrm{d}\mu_{n,k}^{*}$
and $\int\Delta_{g}\,\mathrm{d}\mu_{n,k}=\int\Delta_{g}^{*}\,\mathrm{d}\mu_{n,k}^{*}$.
Combining this with the above estimates we obtain $\left|\int\tau\,\mathrm{d}\mu_{n,k}-\int\tau\,\mathrm{d}\mu\right|<\epsilon$
as well as $\left|\int\Delta_{g}\,\mathrm{d}\mu_{n,k}-\int\Delta_{g}\,\mathrm{d}\mu\right|<\epsilon$. 

Now the proof for the case $\mathbf{P}\left(g\mid\Phi\right)=\infty$
follows along the same lines. 
\end{proof}
\begin{proof}
[Proof of Theorem  \ref{thm:taupressure_is_entropy}] By Lemma \ref{lem:entropylemma}
we have \[
\mathbf{P}\left(g\mid\Phi\right)=\sup\left\{ \frac{h\left(\sigma,\mu\right)}{\int\tau d\mu}+\frac{\int\Delta_{g}d\mu}{\int\tau d\mu}:\mu\in\mathcal{E}_{\sigma}^{1}\left(\tau\right)\:\mbox{with }\Delta_{g}\in L^{1}\left(\mu\right)\right\} .\]
The variational principle for the $\psi$--induced pressure obtained
in Proposition \ref{pro:variationalprinciple-1-1} applied to the
irreducible components of $\Sigma$ then shows that the latter is
equal to $\sup_{a\in I}\mathcal{P}_{\tau}\left(\Delta_{g},\Sigma_{a}^{\mathrm{per}}\right).$
The second equality follows from Corollary \ref{cor:urbgur-criterion}(2).\end{proof}
\begin{rem}
We would finally like to remark that if $\tau$ is bounded away from
zero, then there is a one-to-one correspondents between the $\Phi$--invariant
probability measures on $Y$ and the set $\mathcal{M}_{\sigma}^{1}$.
Using the first equality in Proposition \ref{pro:variationalprinciple-1-1}
it follows that the value of $\mathbf{P}\left(g\mid\Phi\right)$ stays
unchanged if we replace $\mathcal{E}_{\Phi}^{1}$ by the larger set
$\mathcal{M}_{\Phi}^{1}$.
\end{rem}
\providecommand{\bysame}{\leavevmode\hbox to3em{\hrulefill}\thinspace}
\providecommand{\MR}{\relax\ifhmode\unskip\space\fi MR }
% \MRhref is called by the amsart/book/proc definition of \MR.
\providecommand{\MRhref}[2]{%
  \href{http://www.ams.org/mathscinet-getitem?mr=#1}{#2}
}
\providecommand{\href}[2]{#2}

\end{document}